\font\teneufm=eufm10 \font\seveneufm=eufm7
\font\fiveeufm=eufm5
\def\frak#1{{\fam\eufmfam\relax#1}}
\let\goth\mathfrak
\def\cC{\mathcal C}
\def\cF{\mathcal F}
\def\cR{\mathcal R}
\def\cO{\mathcal O}
\def\cT{\mathcal T}
\def\cE{\mathcal E}
\def\cM{\mathcal M}
\def\cN{\mathcal N}
\def\m{\goth m}
\def\gm{\goth m}
\def\GG{\mathbb{G}}
\def\NN{\mathbb{N}}
\def\WW{\mathbf{W}}
\def\VV{\mathbf{V}}
\def\HH{\frak H}
\def\gg{\goth g}
\def\gh{\goth h}
\def\gD{\goth D}
\def\gg{\goth g}
\def\1{\mbox{\bf 1}}
 \DeclareMathOperator{\Hom}{Hom}
\DeclareMathOperator{\Aut}{Aut}
\DeclareMathOperator{\Autext}{Autext}
\DeclareMathOperator{\Isom}{Isom}
\DeclareMathOperator{\Isomext}{Isomext}
\DeclareMathOperator{\Isomint}{Isomint}
\DeclareMathOperator{\Stab}{Stab}
\DeclareMathOperator{\Spec}{\rm Spec}
\DeclareMathOperator{\GL}{\rm GL}
\DeclareMathOperator{\SL}{\rm SL}
\newcommand{\incl}[1][r]
{\ar@<-0.2pc>@{^(-}[#1] \ar@<+0.2pc>@{-}[#1]}
\newcommand{\imm}[1][r]
   {\ar@{}[#1] |*[o][F]{\hbox{%
         %\vrule width 1.5mm height 0pt depth 0pt%
         %\vrule width 0pt height .75mm depth .75mm%
         }}
     \ar@{^{(}->}[#1]}
\newcommand{\uA}{{\underline{A}}}
\newcommand{\uC}{{\underline{C}}}
\newcommand{\uE}{{\underline{E}}}
\newcommand{\uF}{{\underline{F}}}
\newcommand{\uG}{{\underline{G}}}
\newcommand{\uH}{{\underline{H}}}
\newcommand{\uM}{{\underline{M}}}
\newcommand{\uN}{{\underline{N}}}
\newcommand{\uQ}{{\underline{Q}}}
\newcommand{\uR}{{\underline{R}}}
\newcommand{\uX}{{\underline{X}}}
\newcommand{\uY}{{\underline{Y}}}
\newcommand{\uW}{{\underline{W}}}
\newtheorem{theorem}{Theorem}[subsection]%Theorems et al in italic font.
\newtheorem{claim}[theorem]{Claim}
\newtheorem{proposition}[theorem]{Proposition}
\newtheorem{stheorem}{Theorem}[section]%Theorems et al in italic font.
\newtheorem{scorollary}[stheorem]{Corollary}
\newtheorem{slemma}[stheorem]{Lemma}
\newtheorem{sproposition}[stheorem]{Proposition}
\newtheorem{sremark}[stheorem]{Remark}
\newtheorem{sremarks}[stheorem]{Remarks}
\newtheorem{sexample}[stheorem]{Example}
\newtheorem{sdefinition}[stheorem]{Definition}
\theoremstyle{definition}%Theorems et al in roman font.
\newtheorem{remark}[theorem]{Remark}
\numberwithin{equation}{section}
\def\NN{\mathbb{N}}
\def\ZZ{\mathbb{Z}}
\def\gE{\mathfrak{E}}
\def\gR{\mathfrak{R}}
\def\gS{\mathfrak{S}}
\def\G{\mathbb G}
\def\bG{\text{\rm \bf G}}
\def\bZ{\rm \bf{Z}}
\def\bD{\text{\rm \bf D}}
\def\cO{\mathcal{O}}
\def\bR{\text{\rm \bf R}}
\def\bt{\mathbf t}
\def\ol{\overline}
\def\id{\text{\rm id}}
\def\fppf{\text{\rm fppf}}
\def\Lie{\mathop{\rm Lie}\nolimits}
\def\2int{\mathop{2\int}\nolimits}
\def\Dyn{\mathrm{\bf Dyn}}
\def\rDyn{\small\mathrm{Dyn}}
\def\uDyn{\underline{\mathrm{Dyn}}}
\def\uDynn{\underline{\mathrm{Dyn}_{\triangledown}}}  % 
\def\dim{\mathop{\rm dim}\nolimits}
\def\Spec{\mathop{\rm Spec}\nolimits}
\def\Lie{\mathop{\rm Lie}\nolimits}
\def\Hom{\mathop{\rm Hom}\nolimits}
\def\Stab{\mathop{\rm Stab}\nolimits}
\def\Gal{\mathop{\rm Gal}\nolimits}
\def\Mat{\mathop{\rm M}\nolimits}
\def\Aut{\mathrm{Aut}}
\def\uAut{\underline{\Aut}}
\def\Autext{\mathrm{Autext}}
\def\uAutext{\underline{\Autext}}
\def\Isom{\mathrm{Isom}}
\def\uIsom{\underline{\Isom}}
\def\Isomext{\mathrm{Isomext}}
\def\uIsomext{\underline{\Isomext}}
\def\uStab{\underline{\Stab}}
\def\Isomint{\mathrm{Isomint}}
\def\uIsomint{\underline{\Isomint}}
\def\Transp{\mathrm{Transp}}
\def\uTransp{\underline{\Transp}}
\def\Centr{\mathrm{Centr}}
\def\uCentr{\underline{\Centr}}
\def\Dyn{\mathrm{Dyn}}
\def\uDyn{\underline{\Dyn}}
\def\Isom{\mathop{\rm Isom}\nolimits}
\def\Isomext{\mathop{\rm Isomext}\nolimits}
\def\resp.{\mathop{\rm resp.}\nolimits}
\def\lgr{\longrightarrow}
\font\math=cmmi10
\def\varpi{\hbox{\math\char'44}}
\def\simlgr{\buildrel\sim\over\lgr}
\def\pa{\S\kern.15em }
\def\un{\uppercase\expandafter{\romannumeral 1}}
\def\deux{\uppercase\expandafter{\romannumeral 2}}
\def\trois{\uppercase\expandafter{\romannumeral 3}}
\def\quatre{\uppercase\expandafter{\romannumeral 4}}
\def\cinq{\uppercase\expandafter{\romannumeral 5}}
\def\six{\uppercase\expandafter{\romannumeral 6}}
\def\gg{\goth g}
\title[Homogeneous spaces]{Oriented embedding functors of tori as homogeneous spaces }
\date{\today}
\author{Philippe Gille}
\address[]{P. Gille, Institut Camille Jordan - Universit\'e Claude Bernard Lyon 1
43 boulevard du 11 novembre 1918,
69622 Villeurbanne cedex - France }
\email{gille@math.univ-lyon1.fr}
\author{Ting-Yu Lee}
\address[]{T.-Y. Lee, Astronomy Mathematics Building 5F, No. 1, Sec. 4, Roosevelt Rd., Taipei 10617, Taiwan.
}
\email{tingyulee@ntu.edu.tw}
\begin{document}

 \begin{abstract} We provide a characterization of homogeneous spaces under a reductive group 
 scheme such that the geometric stabilizers are maximal tori. 
 The quasi-split case over a semilocal base (and more generally over a LG-ring)
 is of special 
 interest and permits to   answer a  question raised by Marc Levine 
 on $\SL_n$-homogeneous spaces. At the end, we provide an application to the local-global principles for embeddings of \'etale algebras with involution into central simple algebras with involution.
 
\end{abstract}

\maketitle

\bigskip

\noindent {\em Keywords:} Reductive group schemes,  tori,  torsors, homogeneous spaces, embeddings, quasi-split groups, local-global principles, \'etale algebras, central simple algebras.

\smallskip

\noindent {\em MSC 2000: 14L15, 20G35}

\bigskip

\bigskip

\section{Introduction}
Let $k$ be a field and  we fix an separable closure  $k_s$ of $k$ throughout this article;
we denote by $\Gamma_k=\Gal(k_s/k)$ the absolute Galois group of $k$.
Let $X$ be an affine $k$--variety 
that is a $\SL_n$-homogeneous space such that the stabilizer
of a geometric point is a maximal torus. Marc Levine asked whether 
$X$ is $G$-isomorphic to $\SL_n/T$ where $T$ is a maximal torus of $\SL_n$.

Our main classification result (Theorem \ref{thm_main})
tells us that $X$ is the variety\footnote{Those varieties (and their generalizations over a base) have been defined by the second author in \cite{L1}.} of embeddings of a suitable oriented torus $T$ in $\SL_n$ and 
we explain now the meaning of $X(k)$.
Denoting by $\GG_m^{n,1}$ the standard split torus
of $\SL_n$, we remind  the reader that the automorphism
group of the root system $A_{n-1}=\Phi(\SL_{n}, \GG_{m}^{n,1})$
is $S_n \times \ZZ/2\ZZ$ if $n \geq 3$ and $\ZZ/2\ZZ$ for $n=2$.
We have that $T=R^1_{A/k}(\GG_m)$
for an \'etale $k$--algebra $A$ 
(see \cite[\S 7.5]{G}) and that $X(k)$ corresponds to the  set of embeddings $f: T \to \SL_n$ such that the class of the 
$S_n \times \ZZ/2\ZZ$--torsor $\uIsom\Bigl(\Phi(\SL_{n}, \GG_{m}^{n,1}),\Phi(\SL_{n}, f(T))\Bigr)$ is $[A] \times 1$
if $n\geq 3$ and simply $[A]$ if $n=2$.
We used implicitly there that the Galois cohomology set $H^1(k,S_n)$ classifies isomorphism classes of 
\'etale algebras of degree $n$
and that the oriented type of $T$ is 
the class of the above torsor.

On the other hand, if we see $A$ as a $k$-vector space of dimension $n$,
we have an isomorphism $\SL(A) \simlgr \SL_n$
and the natural embedding $T = R^1_{A/k}(\GG_m) \hookrightarrow \SL_1(A)$
defines then an embedding $T \to \SL_n$ of oriented type $[A] \times 1$.
In other words, the $\SL_n$-homogeneous space $X$ carries a $k$--point whose stabilizer is $T = R^1_{A/k}(\GG_m)$. Thus $X$ is $\SL_n$-isomorphic
to $\SL_n/T$ as desired.

This statement can be strengthened in two 
ways. 
First we can replace $\SL_n$ by any quasi-split reductive
$k$-group, the presence of $k$-points on such a 
homogeneous space being the Gille-Raghunathan's theorem
\cite{Gi2004,Rg}.

With more effort, we can replace further the base field $k$ 
by an arbitrary  LG ring
(with  infinite residue fields).
 This is Corollary   \ref{cor_main} which involves
 a generalization of  the existence of maximal tori of any orientation in such group schemes (see Theorem  \ref{thm_semi_local}). 
 As in Steinberg's section theorem, the case of type $A_{2n}$
 requires an additional argument.
Also this generalization needs the notion of versal torsor
in that setting (Reichstein-Tossici \cite{RT});
we introduced a variation of this technique involving 
algebraic spaces (see \S \ref{subsec_versal}).
 
Finally we provide an arithmetic application to the embeddings of \'etale algebras with involution into central simple algebras with involution.
Let $(E,\sigma)$ be an \'etale algebra $E$  over $k$ with involution $\sigma$ and $(A,\tau)$ be a central simple algebra $A$ over $k$ with involution $\tau$.
We want to know when $(E,\sigma)$ can be embedded into $(A,\tau)$ under some constraints on the dimensions of $(E,\sigma)$ and $(A,\tau)$ (see \cite{PR1}, \cite{BLP1}, \cite{BLP2}).
In Theorem \ref{embed_algebras}, we show that when $k$ is a number field and the unitary group associated to $(A,\tau)$ is quasi-split, the local-global principle always holds for the embeddings of 
$(E,\sigma)$ into $(A,\tau)$.

 \medskip

\noindent{\bf Acknowledgments.} We thank Marc Levine for useful discussions and 
Skip Garibaldi for his suggestion to deal with LG rings. We thank the Camille Jordan  Institute for inviting  the second author.

\vskip1cm

\noindent{\bf Notation.}
We use mainly  the terminology and notations of Grothendieck-Dieudonn\'e \cite[\S 9.4  and 9.6]{EGA1}
 which agree with that  of Demazure-Grothendieck used in \cite[Exp.\ I.4]{SGA3}

(a) Let $S$ be a scheme and let $\cE$ be a quasi-coherent sheaf over $S$.
 For each morphism  $f:T \to S$,
we denote by $\cE_{T}=f^*(\cE)$ the inverse image of $\cE$
by the morphism $f$.
 We denote by $\VV(\cE)$ the affine $S$--scheme defined by
$\VV(\cE)=\Spec\bigl( \mathrm{Sym}^\bullet(\cE)\bigr)$;
it is affine  over $S$ and
represents the $S$--functor $Y \mapsto \Hom_{\cO_Y}(\cE_{Y}, \cO_Y)$
\cite[9.4.9]{EGA1}.
%This construction generalizes to algebraic spaces.

\smallskip

(b) We assume now that $\cE$ is locally free of finite rank and denote by $\cE^\vee$ its dual.
In this case the affine $S$--scheme $\VV(\cE)$ is  of finite presentation
(ibid, 9.4.11); also
the $S$--functor $Y \mapsto H^0(Y, \cE_{Y})=
\Hom_{\cO_Y}(\cO_Y, \cE_{Y} )$
is representable by the  affine $S$--scheme $\VV(\cE^\vee)$
which is also denoted by  $\WW(\cE)$  \cite[I.4.6]{SGA3}.
 %Once again this construction generalizes to algebraic spaces.

It applies to the locally free coherent sheaf
${\cE}nd(\cE) = \cE^\vee \otimes_{\cO_S} \cE$
 over $S$ so that we can consider
the affine $S$--scheme $\VV\bigl({\cE}nd(\cE)\bigr)$
that is an $S$--functor in associative commutative and unital algebras
\cite[9.6.2]{EGA1}.
Now we consider the $S$--functor $Y \mapsto \Aut_{\cO_Y}(\cE_{Y})$.
It is representable by an open $S$--subscheme of $\VV\bigl({\cE}nd(\cE)\bigr)$
which is denoted by $\GL(\cE)$ ({\it loc. cit.}, 9.6.4).

\section{Generalities on homogeneous spaces}

Let $S$ be a scheme and let $\bullet$ be the final object of the
category of fppf $S$-sheaves of groups, that is
$\bullet(T)= \Hom_{S-sch}(T,S)=h_S(T)$ for each $S$--scheme $T$.

 Let $\uG$ be a fppf $S$-sheaf of groups.
Let $\uX$ be a fppf $S$-sheaf equipped with  a left action of  $\uG$.
We say that $\uX$ is homogeneous  (resp.\, principal homogeneous) under $\uG$ if
the map $\uX \to \bullet$ is an epimorphism of fppf $S$-sheaves
and if the action map $\uG \times_S \uX \to \uX \times_S \uX$,
$(g,x) \to (x,g.x)$ is an  epimorphism (resp.\ an isomorphism) of fppf $S$-sheaves.
Similarly we have the notion of right homogeneous (resp.\, principal homogeneous) spaces.
We denote by $H^1_{right}(S,\uG)$ the isomorphism classes of principal homogeneous spaces under right $G$-action (\cite[III 2.4.2]{Gir}). 

Let $\uE$ be a right principal homogeneous  $\uG$-space and denote by $\uG'$ the twist of
$\uG$ by $\uE$ through inner automorphisms.
Then twisting by $\uE$ gives rise to an equivalence of categories
between the category of left homogeneous $\uG$--spaces and
that of  left homogeneous $\uG'$--spaces.

\begin{sremark}{\rm For the discussion of the same material in  the representable case, see \cite[VI.1]{R}.
 }
\end{sremark}

\begin{slemma}\label{lem_quotient} Let $\uH$ be a fppf group subsheaf of $\uG$ and $\uX$ be the $\uG$-homogeneous space $\uG/\uH$.  Consider
the $S$--sheaf of groups $\uA=  \uAut_\uG(\uX)$ that acts on the left of $\uX$.

\smallskip

\noindent (1)  Let $\uN=\uN_\uG(\uH)$ be the normalizer of $\uH$ in $\uG$ (as in
\cite[\S 2.3]{Gi2015}).
Then the map $\uG \times_S \uN \to \uG$, $(g,n) \mapsto g n$
 induces an isomorphism  $\uN/\uH \simlgr \uA^{op}$.
%Furthermore $\uA$ acts freely on $\uX$ and
%we have a canonical isomorphism $\uG/ \uN  \simlgr \uX/\uA^{op}$.

\smallskip

\noindent (2) The action of $\uA$  on $\uX$ is free and the map $\uX \to \uA\backslash\uX$
is a (left) principal $\uA$--homogeneous space.

\smallskip

\noindent (3) Let $\uF$ be a right $\uA$-torsor and let $\uX'$ be the twist of $\uX$ by $\uF$.
Then $X'$ is a $G$-homogeneous space and  there is a canonical isomorphism $\uA'\backslash\uX'  \simlgr  \uA\backslash\uX$, where $\uA'=\uAut_\uG(X')$.

\smallskip

\noindent (4) The $S$--forms of $\uX$ (as $\uG$-spaces) are classified by $H^1_{right}(S,  \uA)$.

\end{slemma}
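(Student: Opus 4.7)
The plan is to treat (1) as the foundational computation and to derive (2), (3), (4) formally from it. Throughout I will write $\bar g := g\uH \in \uX(T)$ for $g \in \uG(T)$, so that the class $\bar 1$ has stabilizer subsheaf equal to $\uH$, and will reduce statements to the fppf-local setting by using that $\uG \to \uG/\uH$ is an epimorphism of fppf sheaves.

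For (1), I would define the morphism $\uN \to \uA^{op}$ on sections by sending $n \in \uN(T)$ to the $\uG$-equivariant map $\varphi_n : \bar g \mapsto \overline{gn}$; this is well defined because $n$ normalizes $\uH$, is $\uG$-equivariant by construction, and satisfies $\varphi_n \circ \varphi_m = \varphi_{mn}$, which provides the ``op''. Its kernel is $\uH$, obtained by evaluating at $\bar 1$. For surjectivity as a map of fppf sheaves, given $\varphi \in \uA(T)$ I would pass to a cover $T' \to T$ on which $\varphi(\bar 1) \in \uX(T)$ lifts to some $n \in \uG(T')$. Since $\varphi$ is a $\uG$-equivariant automorphism, the stabilizer of $\varphi(\bar 1)$ equals the stabilizer of $\bar 1$, namely $\uH$; but the stabilizer of $n\uH$ in $\uG$ is visibly $n \uH n^{-1}$, so $n \uH n^{-1} = \uH$ and $n \in \uN(T')$. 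Any $\uG$-equivariant endomorphism of $\uX$ is determined by its value on $\bar 1$, hence $\varphi_{T'} = \varphi_n$, and fppf descent concludes.

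Parts (2), (3), (4) are then formal. Freeness in (2) is the kernel computation from (1) applied pointwise, and under $\uA \simeq (\uN/\uH)^{op}$ the $\uA$-orbits in $\uX$ are precisely the $\uN$-right-cosets in $\uG/\uH$, giving $\uA\backslash\uX \simlgr \uG/\uN$; local triviality of the torsor $\uX \to \uG/\uN$ follows from fppf-local sections of $\uG \to \uG/\uN$, along which the pullback of $\uX$ becomes the trivial $\uA$-torsor $\uN/\uH$. For (3), since $\uA$ acts on $\uX$ through $\uG$-automorphisms, the twist $\uX' = \uF \wedge^{\uA} \uX$ inherits a $\uG$-action, and the projection $\uX \to \uA\backslash\uX$ twists to $\uX' \to \uA\backslash\uX$ with structure group the inner form $\uA' = {}^{\uF}\uA = \uAut_\uG(\uX')$; the canonical isomorphism $\uA'\backslash\uX' \simlgr \uA\backslash\uX$ is then obtained locally (where $\uF$ trivializes, identifying $(\uX',\uA')$ with $(\uX,\uA)$) and descends because the identification is insensitive to the trivialization up to inner conjugation by $\uA$. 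For (4), an $S$-form $\uY$ of $\uX$ produces the sheaf $\uIsom_\uG(\uX, \uY)$, a right $\uA$-torsor by precomposition, and the usual dictionary yields the bijection with $H^1_{\rm right}(S, \uA)$.

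The main obstacle is the surjectivity step in (1): one needs the stabilizer comparison $n \uH n^{-1} = \uH$ to hold as an equality of fppf subsheaves, which is the point where the normalizer formalism of \cite[\S 2.3]{Gi2015} is essential and where one has to be careful in case $\uH$ is not assumed flat. Once part (1) is secured, the remaining three parts are essentially bookkeeping with the principal homogeneous space $\uX \to \uG/\uN$ produced in (2).
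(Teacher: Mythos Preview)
Your proof is correct and follows essentially the same route as the paper's: in (1) the paper also defines the map $\uN/\uH \to \uA^{op}$ via $n \mapsto (\bar g \mapsto \overline{gn})$, checks injectivity at $\bar 1$, and for surjectivity lifts $f([1])$ to some $g \in \uG$ and verifies that $g$ normalizes $\uH$; parts (2)--(4) are handled in the paper exactly as you outline, by freeness of the $\uW$-action, by trivializing $\uF$ on an fppf cover, and by the $\uIsom_\uG(\uX,\uX')$ dictionary. Your stabilizer argument for surjectivity in (1) (equating $\Stab(\varphi(\bar 1))$ with $\Stab(\bar 1)$, hence $n\uH n^{-1}=\uH$ as subsheaves) is actually slightly cleaner than the paper's, and your worry about flatness of $\uH$ is unnecessary: the equality of stabilizer subsheaves follows formally from $\varphi$ being a $\uG$-equivariant automorphism, with no representability or flatness hypothesis on $\uH$.
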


\begin{proof}
We put $\uW=\uN/\uH$. 
Let $1_G$ be the identity element of $\uG(S)$. Denote by $[1]$ the image of $1_G$ in $\uX(S)$.

\noindent{(1)} The map  $\uG \times_S \uN \to \uG$
induces a map $\uX \times \uW \to \uX$ hence a group homomorphism
$\phi: \uW \to \uA^{op}$ of $S$-sheaves of groups.

\smallskip

\noindent{\it $\phi$ is a monomorphism.} Let $S'$ be an $S$-scheme and let
$w \in \uW(S')$ be an element such that $\phi(w)=1$. Up to localization we can assume
that $w$ arises from an element $n \in \uN(S')$ which satisfies $[1] \cdot n =[1]\in \uX(S')$.
Hence $n\in\uH(S')$ and $w=1$.

\smallskip 

\noindent{\it $\phi$ is an epimorphism.} Let $S'$ be an $S$--scheme and let
$f \in \uA(S')$.  
 We put  $x=f([1]) \in \uX(S')$ and up to localization we can assume
that $x$ arises from some $g \in \uG(S')$. We claim that $g \in \uN(S')$.
For each $S'$-scheme $S''$ and   each $g' \in \uG(S'')$  we have
$f( g'\cdot [1])= g' \cdot g_{S''}\cdot [1]$.
In particular for each $h' \in \uH(S'')$, we have $h' \, g_{S''} \in g_{S''} \uH(S'')$
so that $g_{S''}$ normalizes $\uH(S'')$. The claim is proven
and it follows that $f= \phi(w)$ where $w$ is the image of $g \in \uN(S')$ in $\uW(S')$.

\smallskip

\noindent (2) We start by establishing that 
$\uA$ acts freely on $\uX$.
By (1), it is equivalent to proving that $\uW$ acts freely on $\uX$.
Let $S'$ be an $S$-scheme and $x\in \uX(S')$.
After localization we can assume that $x$ arises from an element $g\in\uG(S')$.
Consider the $S'$-sheaf of groups $\uStab_{\uW}(x)$.
For each $S'$-scheme $S''$ and $w\in \uStab_{\uW}(x)(S'')$, up to localization we can assume that $w$ comes from an element $n\in \uN(S'')$.
Then $[g]= [g]\cdot w=[gn]$, that implies $n\in \uH(S'')$ and $w$ is the identity in $\uW(S'')$.
Hence $\uW$ acts freely on $\uX$.
 % and $\uX\to\uA\backslash\uX$ is a right torsor of $\uW$.
We put $\uY=\uA\backslash\uX$.
In view of \cite[III.3.1.2]{Gir},
the map $\uX\to \uY$ satisfies then  the second part
of the definition of $\uA$-torsor, that is,
the map  $\uA_{\uY}  \times_{\uY} \uX \to  
\uX \times_{\uY} \uX$, $(a,x) \mapsto (x,ax)$,  is an isomorphism.
Since $\uX \to \uY$ admits locally sections for the fppf topology,
the first requirement is satisfied as well \cite[definition III.4.1]{Gir}.
Thus $\uX\to \uY$ is a left $\uA$-torsor.

\smallskip

\noindent (3)
Let $p$ be the projection from $\uX$ to the quotient sheaf $\uA\backslash\uX$.
Consider the map $\pi:\uF\times \uX\to \uA\backslash \uX$, which projects $\uF\times \uX$ to $\uX$ and then to the quotient sheaf $\uA\backslash \uX$.
Then clearly $\pi$ induces a map from $\uX'$ to $\uA\backslash \uX$, which we still denote by $\pi$.

Note that $\uG$ acts on the $\uX$-factor of $\uF\times\uX$. As the $G$-action and the $A$-action commute on $X$,  this defines an $\uG$-action on $\uX'$.
Choose a $fppf$-cover $\{S_i\}$of $S$ that trivializes the $\uA$-torsor $\uF$.
For each $S_i$, there is an isomorphism of right $\uA$-torsors between $\uF_{S_i}$ and $\uA_{S_i}$.
Hence $X'_{S_i}\simeq X_{S_i}$ and $\uA'_{S_i}\simeq \uA_{S_i}$.

Fix an isomorphism of $\uA_{S_i}$-torsors $\iota_i: \uF_{S_i}\to \uA_{S_i}$.
The map $\iota_i$ induces an isomorphism between $\uX'_{S_i}$ and $\uX_{S_i}$ that sends $[f, x]$ to $\iota_i(f)(x)$ for $x\in \uX(T_i)$, $f\in\uF(T_i)$ and for arbitrary $S_i$-scheme $T_i.$
We denote this induced isomorphism still by $\iota_i$.
The map $\iota_i$ induces an isomorphism between $\uA'_{S_i}$ and $\uA_{S_i}$ by sending 
$a'$ to $\iota_i\circ a'\circ \iota_i^{-1}$ for $a'\in\uA'(T_i)$ and $T_i$ an $S_i$-scheme.
Then clearly $\iota_i$ induces an isomorphism $\ol{\iota}_i: (\uA'\backslash\uX')_{S_i}\to(\uA\backslash\uX)_{S_i}$.

One checks that $p\circ \iota_i=\pi_{S_i}$.
Hence $\ol{\iota}_i\circ p'=\pi_{S_i}$ where $p'$ is the projection from $\uX'$ to $\uA'\backslash\uX'$.
This implies that $\pi(x')=\pi(a'\cdot x')$ for $x'\in \uX'(T)$, $a'\in\uA'(T)$ and for arbitrary $S$-scheme $T$.
Hence $\pi$ induces a map $\ol{\pi}$ from $\uA'\backslash\uX'$ to $\uA\backslash \uX$.
On each $S_i$,  we have $\ol{\pi}_{S_i}=\ol{\iota}_i$. As $\ol{\iota}_i$ is an isomorphism,
$\ol{\pi}$ is an isomorphism between $\uA'\backslash\uX'$ and $\uA\backslash\uX$.

\smallskip

\noindent (4) Let $X'$ be an $S$-form of $X$ as $\uG$-spaces.
Then the sheaf of isomorphisms $\uIsom_\uG(X,X')$ is a right $\uA$-torsor and
the contracted product $\uIsom_{\uG}({\uX,\uX'})\land^\uA\uX$ 
(as defined in \cite[\S III.1.3]{Gir}) is canonically isomorphic to $\uX'$.

Conversely given a right $\uA$-torsor $\uF$, let $\uX'$  be the twist of $\uX$ by $\uF$.
According to the proof of  (3) , the twist $\uX'$ is a $G$-homogeneous space and is isomorphic to $\uX$ $fppf$-locally.
Hence $\uX'$ is an $S$-form of $\uX$.
We claim that the sheaf of isomorphisms $\uIsom(X, X')$ is isomorphic to $\uF$.

Note that  $\uA\land^{\uA} \uX$ is canonically isomorphic to $\uX$ and 
the $\uA$-action on $\uX$ corresponds to the $\uA$-action on the $\uA$-factor of $\uA\land^{\uA} \uX$.
Regard $\uA$ as a right $\uA$-torsor.
Then there is a natural map $\iota$ between $\uIsom_{\uA}(\uA,\uF)$ and $\uIsom_{\uG}(\uX,\uX')=\uIsom_{\uG}(\uA\land^\uA\uX,\uF\land^\uA\uX')$.
To be precise, for an $S$-scheme $S'$ and $\phi\in\uIsom_\uA(\uA,\uF)(S')$, $\iota(\phi)([(a,x)])=[(\phi(a),x)]$ for any $S'$-scheme $S''$ and $[(a,x)]\in (\uA\land^\uA\uX)(S'')$.

The sheaf of automorphisms of $\uA$ (as a right $\uA$-torsor) is $\uA$ itself which acts on the left of $\uA$.
As $\uIsom_{\uA}(\uA,\uF)$ is a right $\uAut_\uA(\uA)$-torsor, it is  a right $\uA$-torsor. 
It is easy to check that  $\iota$ is compatible with the $\uA$-action. 
Since $\uIsom_{\uA}(\uA,\uF)$ and $\uIsom_{\uG}(\uX,\uX')$ are both right $\uA$-torsors, and $\iota$ is compatible with the $\uA$-action, $\iota$ is an isomorphism.

There is a natural map $i$ from $\uIsom_\uA(\uA,\uF)$ to $\uF$ that sends $\phi\in \uIsom_\uA(\uA,\uF)(S')$ to $\phi(1_\uA)\in\uF(S')$ for all $S$-schemes $S'$.
Since $i$ is compatible the right $\uA$-action, $i$ is an isomorphism between right $\uA$-torsors, and $i\circ\iota^{-1}$ gives the desired isomorphism between $\uIsom_\uG(\uX,\uX')$ and $\uF$. 
Therefore the map that sends an $S$-form $\uX'$ to the right $\uA$-torsor $\uIsom_{\uG}(\uX,\uX')$  is a bijection.
\end{proof}

\section{Embedding functors}
Let $S$ be a scheme. 
For a point $s\in S$, let $\kappa(s)$ be the residue field of $s$
and $\ol{\kappa(s)}$ be the algebraic closure of $\kappa(s)$. Let
$\ol{s}$ be the scheme $\Spec(\ol{\kappa(s)})$. 

\subsection{Twisted root data and Weyl groups}

Let $T$ be an $S$-torus and denote by $\cM$
its sheaf of characters (for the fppf topology). We denote by $(-,  - )$ the 
canonical pairing $\cM^\vee \times \cM \to \ZZ_S$
We remind the reader of the following definition.
\cite[XXII.1.9]{SGA3}. A {\it twisted root datum} $\Psi= (\cM, \cM^\vee, \cR, \cR^\vee)$
with respect to $T$ consists of finite $S$-subschemes $\cR \subset  \cM_S$
and  $\cR^\vee \subset  \cM^\vee_S$ together with an isomorphism $\cR\simlgr \cR^\vee$
denoted by $\alpha \mapsto \alpha^\vee$
satisfying the following rules:

\smallskip

 (DR 1) For each $S$--scheme $S'$ and each  $\alpha \in \cR(S')$, 
 we have $(\alpha^\vee , \alpha) = 2$;

\smallskip

(DR 2) For each $S$--scheme $S'$ and each  $\alpha, \beta \in R(S')$, 
we have  $\alpha - (\beta^\vee, \alpha) \beta \in \cR(S')$,
 $\alpha^\vee - (\alpha^\vee, \beta) \beta^\vee \in \cR^\vee(S')$.

\smallskip

\begin{sexample} (Split root datum) {\rm
Let $M$ be a $\bZ$-lattice and $M^\vee$ be its dual lattice.
Let $R$ and $R^\vee$ be  finite subsets of $M$  and $M^{\vee}$ respectively.
Suppose that  $(M,M^\vee,R, R^\vee)$  satisfy the root data axioms. 
Denote by $\uM_S$ (resp.\ $\uR_S$) the constant sheaf on $S$ associated to $M$ (resp.\ $R$).  
Then $(\uM_{S},\uM_{S}^\vee,\uR_{S}, \uR_{S}^\vee)$ is a twisted
root datum for  the split torus $D_S(M)$.
It  is called the split twisted root datum associated to 
the root data $(M,M^\vee,R, R^\vee)$.
}
\end{sexample}

 Let $\Psi= (\cM, \cM^\vee, \cR, \cR^\vee)$ be a twisted root datum with respect to 
$T$. According to \cite[Prop.\ 6.1.3.(1)]{Gi2015},
$S$ admits an \'etale cover $(S_i)_{i \in I}$ such that 
$\Psi_{S_i}$ is split for each $i \in I$.

We say that the twisted root datum $\Psi$ is of \emph{type} $(M,M^\vee,R, R^\vee)$ at $s\in S$ if $\Psi_{\ol{s}}\simeq (\uM_{\ol{s}},\uM_{\ol{s}}^\vee,\uR_{\ol{s}}, \uR_{\ol{s}}^\vee)$.
The quoted result implies that the type is a locally constant function on $S$
and also that the maps $\cR \to \cM_S$,  $\cR^\vee \to \cM^\vee_S$
are clopen immersions.

We denote by $W(\Psi)$ the Weyl $S$-group scheme 
of the twisted root datum $\Psi$.
If $\Psi$ is split of constant type, $\Psi$ can be written as $(\uM_{S},\uM_{S}^\vee,\uR_{S}, \uR_{S}^\vee)$ for some root datum $(M,M^\vee,R, R^\vee)$.
In this case, we have $W(\Psi) = W_S$
where $W$  is the (abstract) Weyl group  
of the root datum $(M,M^\vee,R, R^\vee)$;
we recall that $W$ is 
a finite group generated by the reflections $s_\alpha$ defined by
\[s_\alpha(x)=x-(\alpha^{\vee},x)\alpha, \mbox { for $\alpha\in R$ and $x\in M$}.\]
For $\alpha\in R$, the reflection $s_\alpha$ induces an automorphism $\tilde{s}_\alpha$ on $T$ by

\[\tilde{s}_\alpha(t)=t(\alpha^{\vee}(\alpha(t)))^{-1}\, \mbox { for any $S$-scheme $S'$, $\alpha\in R$ and $t\in T(S')$}.\]
(See \cite{SGA3} Exp.\ XXII, 3.3.)

The map sending  $s_{\alpha}$ to $\tilde{s}_\alpha$ defines an action of $W(\Psi)^{op}$ on $T$.
For $w\in W(\Psi)(S)$, we denote by $\tilde{w}$ its image in $\uAut(T)(S).$
In general, $W(\Psi)$  is a finite \'etale group scheme over $S$, which is regarded as a subgroup scheme of $\uAut(\cM)$.

\begin{sremark} \label{rem_product} (Product of twisted root data)
{\rm  We have a direct product for root data \cite[XXI.6.4]{SGA3} but strangely the analogous notion for twisted root data is not defined.
Let $T_1, T_2, \dots, T_d$ be $S$--tori with respective character sheaves
$\cM_i$. We put ${T=T_1 \times_S \dots \times_S T_d}$, 
and denote by $p_i: T \to T_i$ the projection on the factor $i$
and $q_i: T_i \to T$;
its character sheaf is $\cM=\cM_1 \oplus \dots \oplus \cM_d$.
Let $\Psi_i= (\cM_i, \cM_i^\vee, \cR_i, \cR_i^\vee)$ be 
a twisted root data over $S$ ($i=1,...,d$).
We define $\Psi_1 \times \dots \times \Psi_d=\bigl(\cM ,\cM^\vee, \cR, \cR^\vee)$
with $\cR(X)= \sqcup_{i=1}^d p_i^*(\cR_i(X) ) \subset \cM(X) $,
 $\cR^\vee(X)= \sqcup_{i=1}^d q_{i,*}(\cR^\vee_i(X)) \subset \cM^\vee(X) $ 
for each $S$--scheme $X$. This is twisted root data for
the torus $T$; its  Weyl group $S$-scheme of the product is the product of Weyl groups
and the type function of  the product is the product of the type functions 
(as for root data \cite[XXI.6.4.2]{SGA3}). The automorphism group is more involved and we will  discuss it in Lemma \ref{lem_automorphism_Weil}, see also  Remark \ref{rem_automorphism_Weil}.
We have to pay attention that the 
Dynkin $S$--diagram as the product is not the product of the Dynkin $S$-diagrams;
indeed we have an indentification  $\uDyn(\Psi_1 \times \dots \times \Psi_d)
= \uDyn(\Psi_1) \sqcup   \dots \sqcup \uDyn( \Psi_d)$.}

\end{sremark}

The next fact is  not surprizingly in the literature.

\begin{slemma} \label{lem_descent}
 Let $(S_i)_{i \in I}$ be a fpqc cover of $S$ and 
let $(\Psi_i/S_i)_{i \in I}$ be twisted root data
equipped with a descent datum. Then this descent data is effective.  
\end{slemma}

\begin{proof}
 We write $\Psi_i= (\cM_i, \cM_i^\vee, \cR_i, \cR_i^\vee)$ be 
a twisted root data over $S_i$ where $\cM_i$ is the character group of an $S_i$--torus
$T$. By fpqc descent for affine schemes \cite[Tag 0245]{St}, 
the $T_i$'s (resp.\ $\cR_i$, $\cR_i^\vee$) descend to an $S$--torus $T$
(resp.\ to affine  $S$-schemes $\cR$, $\cR^\vee$).
In view of the permanence properties \cite[prop.\ 14.5.3.(7)]{GW},
 $\cR$ and  $\cR^\vee$ are finite $S$--schemes. Furthermore the 
 clopen immersions $\cR_i \to \cM_i$'s (resp.\ $\cR_i^\vee \to \cM_i^\vee$'s, resp.\
 the isomorphism $\cR_i \simlgr  \cR_i^\vee$)
 descend  to  a clopen immersion $\cR \to \cM$ 
 (resp.\ $\cR^\vee \to \cM^\vee$, resp.\ an isomorphism $\cR \simlgr \cR^\vee)$. 
The rules (DR1) and (DR2) for the  $\Psi_i$'s
imply that (DR1) and (DR2) are fulfilled by $\Psi$ as well.
Thus $\Psi$ is a twisted root datum.

\end{proof}

Let $G$ be a reductive group scheme over $S$.
Suppose that $G$ has a maximal torus $T$ over $S$.
We denote by $\Phi(G,T)$ the twisted root datum of $G$ with respect to $T$.
Let $\uN_G(T)$  the normalizer of $T$ in $G$. The conjugation action of $\uN_G(T)/T$ on $T$ gives an isomorphic from $\uN_G(T)/T$ to $W(\Phi(G,T))^{op}$. (cf. \cite{SGA3} Exp.\ XXII, 3.1-3.4)

\medskip

\subsection{The Orientation}
Let $\Psi_1$, $\Psi_2$ be two twisted root data. Suppose that $\Psi_1$ and $\Psi_2$ 
are of the same type at each $s\in S$. 
Let $\uIsom(\Psi_1,\Psi_2)$ be the
sheaf of isomorphisms between $\Psi_1$ and $\Psi_2$. Then
$\uIsom(\Psi_1,\Psi_2)$ is a right principal homogeneous space
of $\uAut(\Psi_1)$ and a left principal homogeneous of
$\uAut(\Psi_2)$. 
Define
\[\uIsomext(\Psi_1,\Psi_2)=W(\Psi_2)\backslash\uIsom(\Psi_1,\Psi_2).\]

Write the twisted root datum $\Psi_i$ as $(\cM_i,\cM_i^\vee, \cR_i, \cR_i^{\vee}).$
For an $S$-scheme $S'$ and $f\in \uIsom(\Psi_1,\Psi_2)(S')$, $f$ induces an isomorphism between the Weyl groups $W(\Psi_{1,S'})$ and $W(\Psi_{2,S'})$.
Namely for any $S'$-scheme $S''$ and any root $\alpha$ of $\cR_{1}(S'')$, $f$ sends the reflection $s_\alpha$ to $s_{f(\alpha)}$ and   $f\circ s_{\alpha}=s_{f(\alpha)}\circ f.$
Hence  $\uIsomext(\Psi_1,\Psi_2)$ is canonically isomorphic to $\uIsom(\Psi_1,\Psi_2)/W(\Psi_1).$
As a consequence, the natural isomorphism between sheaves $\uIsom(\Psi_1,\Psi_2)$ and $\uIsom(\Psi_2,\Psi_1)$, which sends $f\in \uIsom(\Psi_1,\Psi_2)(S')$ to $f^{-1}$, gives an isomorphism between $\uIsomext(\Psi_1,\Psi_2)$ and $\uIsomext(\Psi_2,\Psi_1).$

Suppose that $\uIsomext(\Psi_1,\Psi_2)(S)$ is nonempty.
For $u\in\uIsomext(\Psi_1,\Psi_2)(S)$, we define $\uIsomint_u(\Psi_1,\Psi_2)$ to be the fiber of $\uIsom(\Psi_1,\Psi_2)\to\uIsomext(\Psi_1,\Psi_2)$ at $u$.

Let $G$ be a reductive group scheme  over $S$. 
The \emph{type} of
$G$ at $s$ is the isomorphism class of the root datum of $G_{\ol{s}}$ with respect to its maximal torus. (ref.~\cite{SGA3},
Exp.\ XXII, Def.\ 2.6.1, 2.7)

Suppose that $G$ has a maximal torus $T$. We define $\uIsomext(G,\Psi)$ to be  $\uIsomext(\Phi(G,T),\Psi)$.
Let  $T^\sharp$ be another maximal torus of $G$ over $S$. For an $S$-scheme $S'$, every element of the transporter $\uTransp_G(T^\sharp,T)(S')$ gives an isomorphism from $T^\sharp_{S'}$ to $T_{S}$ via conjugation, which in turns
gives an isomorphism from $\Phi(G,T)$ to $\Phi(G,T^\sharp)$. 
As $\uTransp_G(T^\sharp,T)$ is a right homogeneous space  under $\uN_G(T^\sharp)$, through the identification of $\uN_G(T^\sharp)/T^\sharp$ with $W(\Phi(G,T^\sharp))^{op}$, we see that there is 
a natural morphism  \[\uIsom(\Phi(G,T^\sharp),\Psi)\land^{\uN_G(T^\sharp)^{op}}\uTransp_G(T^\sharp,T)\to \uIsom(\Phi(G,T),\Psi).\]
This gives a canonical isomorphism from $\uIsomext(\Phi(G,T^\sharp),\Psi)$ to $\uIsomext(\Phi(G,T),\Psi)$.
 Hence $\uIsomext(G,\Psi)$ is well-defined. For those $G$ without maximal torus over $S$, we define $\uIsomext(G,\Psi)$
 by descent (ref. \cite[\S 1.2.1]{L1}).

We define $\uIsomext(\Psi,G)$ in a similar way.
As there is a canonical isomorphism between $\uIsomext(\Phi(G,T),\Psi)$ and $\uIsomext(\Psi,\Phi(G,T))$, 
the two sheaves $\uIsomext(G,\Psi)$ and $\uIsomext(\Psi,G)$ are canonically isomorphic.

An \emph{orientation} of $G$ with respect to $\Psi$ is an element of $\uIsomext(G,\Psi)(S).$

\smallskip

A twisted root datum $\Psi$ is \emph{admissible} for $G$ if at each $s\in S$,  the type of $G_{\ol{s}}$ is the same as the type of   $\Psi_{\ol s}$,
 and $\uIsomext(G,\Psi)(S) \not= \emptyset$. The \emph{admissibility} condition is a necessary condition for the existence of a maximal torus $T$ of $G$ such that $\Phi(G,T)$ is isomorphic to $\Psi$. 
 
Let $G'$ be an $S$-form of $G$.
Let $\uIsom(G,G')$ be the sheaf of group isomorphisms between $G$ and $G'$.
Note that $G$ acts on itself by conjugation. Thus we can define the right quotient of $\uIsom(G, G')$ by the adjoint quotient $G_{ad}$ of $G$ and 
denote  $\uIsom(G, G')/G_{ad}$ by $\uIsomext(G,G')$.
  
  \smallskip

 Let $\Psi_1$, $\Psi_2$ and $\Psi_3$ be twisted root data over $S$. Suppose that
all of them are of the same type at each geometric fibre of $S$. Then
we have the following morphism \label{pairing1}:
\begin{equation}
\uIsomext(\Psi_1,\Psi_2)\times\uIsomext(\Psi_2,\Psi_3)\to\uIsomext(\Psi_1,\Psi_3),
\end{equation}
that comes from the composition of isomorphisms 
$$\uIsom(\Psi_1,\Psi_2)\times\uIsom(\Psi_2,\Psi_3)\to\uIsom(\Psi_1,\Psi_3).$$

Suppose $G$ and $\Psi_1$ are of the same type at each geometric fiber of $S$.
Similarly we have the following pairings coming from the composition of isomorphisms between root data.

\begin{proposition}\label{pairing}
We have the following pairings
\begin{enumerate}
 \item $\uIsomext(G,\Psi_1)\times\uIsomext(\Psi_1,\Psi_2)\to\uIsomext(G,\Psi_2).$
 \item  $\uIsomext(G,\Psi_1)\times\uIsomext(G,\Psi_2)\to\uIsomext(\Psi_1,\Psi_2).$ 
 \item $\uIsomext(G,\Psi_1)\times\uIsomext(G',\Psi_1)\to\uIsomext(G,G').$  
 \item $\uIsomext(G',G) \times \uIsomext(G,\Psi)\to \uIsomext(G’,\Psi)$.

 \end{enumerate}
 \end{proposition}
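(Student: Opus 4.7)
The plan is to define each pairing first at the level of $\uIsom$ functors---by ordinary composition, or by pre-inversion followed by composition---and then check that it descends to the prescribed exterior quotients. The only content is this descent, which in each case rests on two structural facts. First, if $\psi:\Psi\to\Psi'$ is an isomorphism of twisted root data, then conjugation by $\psi$ carries $W(\Psi)$ onto $W(\Psi')$; this lets one pass freely between the left quotient $W(\Psi')\backslash\uIsom(\Psi,\Psi')$ and the right quotient $\uIsom(\Psi,\Psi')/W(\Psi)$. Second, by the isomorphism theorem for reductive group schemes (SGA3, Exp.~XXIII--XXIV), after an fppf-local choice of maximal tori $T\subset G$ and $T'\subset G'$, the sheaf $\uIsomext(G,G')$ is canonically identified with a Weyl-group quotient of $\uIsom(\Phi(G,T),\Phi(G',T'))$.

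For (1), I would send a pair $(\phi,\psi)$ to $\psi\circ\phi$. If $\phi$ is replaced by $w_1\circ\phi$ with $w_1\in W(\Psi_1)$, then $\psi\circ(w_1\circ\phi)=(\psi\, w_1\,\psi^{-1})\circ(\psi\circ\phi)$ with $\psi\, w_1\,\psi^{-1}\in W(\Psi_2)$; replacing $\psi$ by $w_2\circ\psi$ with $w_2\in W(\Psi_2)$ is already a modification by $W(\Psi_2)$. For (2) I would send $(\phi_1,\phi_2)$ to $\phi_2\circ\phi_1^{-1}$, where the same Weyl-conjugation argument handles both factors (noting that the conjugation is by the root-datum isomorphism $\phi_2\circ\phi_1^{-1}$). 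For (4) the map is simply precomposition: an exterior isomorphism $G'\to G$ combined with an orientation $G\to\Psi$ composes to an orientation $G'\to\Psi$, well-defined modulo $W(\Psi)$ by exactly the argument used in (1).

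For (3), which I expect to be the delicate case, I would work fppf-locally and fix maximal tori $T\subset G$ and $T'\subset G'$. Given representatives $\phi\in\uIsom(\Phi(G,T),\Psi_1)$ and $\phi'\in\uIsom(\Phi(G',T'),\Psi_1)$, the composition $\phi'^{-1}\circ\phi$ is an isomorphism $\Phi(G,T)\to\Phi(G',T')$, which by the isomorphism theorem lifts to an element of $\uIsom(G,G')$, unique up to inner automorphisms by $T'$. Passing to $\uIsomext(G,G')=\uIsom(G,G')/G_{ad}$ further absorbs the $W(\Psi_1)$-ambiguity on either side, since Weyl elements lift to sections of the normalizer of a maximal torus and hence act as inner automorphisms of $G$ or $G'$. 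Independence of the local choices of $T$ and $T'$ then follows from the canonical-isomorphism argument of \cite[\S 1.2.1]{L1} already used to make $\uIsomext(G,\Psi)$ well-defined, and fppf descent packages these local pairings into a global one.

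The main obstacle is the bookkeeping in (3): one must verify that quotienting $\uIsom(G,G')$ by $G_{ad}$ precisely absorbs both the Weyl-group ambiguity in $\uIsomext(G,\Psi_1)$ and in $\uIsomext(G',\Psi_1)$, together with the torus ambiguity in the lift produced by the isomorphism theorem, and nothing more. Items (1), (2), and (4) are then purely formal consequences of the two structural facts stated at the outset.
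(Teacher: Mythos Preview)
Your approach is essentially the same as the paper's: define the pairings by composition (or inversion-then-composition) of isomorphisms of root data, check descent to the Weyl-group quotients via the conjugation compatibility $\psi\,W(\Psi)\,\psi^{-1}=W(\Psi')$, and for the cases involving $\uIsomext(G,G')$ invoke the identification with $\uIsomext(\Phi(G,T),\Phi(G',T'))$ locally and then descend. The paper is much terser---it dispatches (1) and (2) in one line and for (3) and (4) simply cites the isomorphism $\uIsomext(\Phi(G,T),\Phi(G',T'))\simeq\uIsomext(G,G')$ together with \'etale-local existence of maximal tori, deferring the verification to \cite[Prop.~1.4]{L1}---whereas you spell out the Weyl-conjugation bookkeeping explicitly; but the underlying argument is the same.
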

\begin{proof}
The first two assertions come from the definition of $\uIsomext(G,\Psi)$ and the composition of isomorphisms between root data.
For (3) and (4), we notice that if $G$ and $G'$ have maximal tori $T$ and $T'$ respectively, then $\uIsomext(\Phi(G,T),\Phi(G',T'))$ is representable and isomorphic to $\uIsomext(G,G')$.
As reductive groups have maximal tori  \'etale locally, assertions (3) and (4) can be deduced from descent. 
We refer to \cite[Prop.\ 1.4]{L1} for more details.
\end{proof}

\smallskip

Another complement is the following criterion of 
reduction of torsors.
Let $G$ be a reductive group scheme over $S$ with a maximal torus $T$.
Let $E$ be right $fppf$ $G$-torsor over $S$.
Consider the action of $G$ on itself by conjugation and  set $G^\sharp =E\land^G G$. 
As $G^\sharp$ is an inner twisted form of $G$, there is a canonical element $c\in\uIsomext(G,G^\sharp)(S)$.
Let  $\Psi$ and $\Psi^\sharp$ be admissible
twisted root data of $G$  and $G^\sharp$ respectively.
For any such pair $(\Psi,\Psi^\sharp)$, the sheaf  $\uIsomext(G,G^\sharp)$ is canonically isomorphic to $\uIsomext(\Psi,\Psi^\sharp)$. Thus we can regard $c$ as an element in $\uIsomext(\Psi,\Psi^\sharp)(S)$. 

\begin{proposition}\label{prop_new}
Let $E$, $T$, $G$ and $G^\sharp$ be as above 
and consider the twisted root datum $\Phi=\Phi(G,T)$.
Then the following assertions are equivalent:

\smallskip

(i) $E$ admits a reduction to $T$, 
i.e.\ there is a right  $T$-torsor $F$ such that $E\simeq F\land^T G$ as $G$-torsors;

\smallskip

(ii) $G^\sharp$ admits  a maximal $S$--torus $T^\sharp$ satisfying the following property:

\smallskip
the set $\uIsomint_c(\Phi,\Phi^\sharp)(S)$ is nonempty, 
with the notation $\Phi^\sharp=\Phi(G^\sharp,T^\sharp)$.

\end{proposition}

\begin{proof}
Let $\uIsom_G(G,E)$ be the fppf-sheaf of isomorphisms of right $G$-torsors. Denote by $1_G$  the neutral element of $G(S)$.
There is an isomorphism $\iota$ from $\uIsom_G(G,E)$ to $E$ which sends 
$f\in\uIsom_G(G,E)(S')$ to $f(1_G)$ for any $S$-scheme $S'$. 
Regard $G$ as a right $G$-torsor,  the left multiplication of $G$ on itself forms the automorphism group $\uAut_G(G)$.
Thus $\uIsom_G(G,E)$ is a right $G$-torsor and $\iota$ is an isomorphism of right $G$-torsors.

Denote by $\uIsomint_c(G,G^\sharp)$ the fiber of $\uIsom(G,G^\sharp)\to \uIsomext(G,G^\sharp)$ at $c$.
Clearly an isomorphism between the $G$-torsors $G$ and $E$ gives an isomorphism between reductive groups $G$ and $G^\sharp$ compatible with the orientation $c$. Hence  there is a natural surjective morphism of sheaves from $\uIsom_G(G,E)$ to $\uIsomint_c(G,G^\sharp)$.

\smallskip

\noindent $(ii) \Longrightarrow (i)$. We are given a maximal $S$--torus $T^\sharp$ of $G^\sharp$
such that $\uIsomint_c(\Phi,\Phi^\sharp)(S)\neq\emptyset$.
Next consider the sheaf of isomorphisms $\uIsomint_c((G,T),(G^\sharp,T^\sharp))$, which consists of sections in $\uIsomint_c(G,G^\sharp)$   that take $T$ to $T^\sharp$.
This is a subsheaf of $\uIsomint_c(G,G^\sharp)$. 
Set \[\cF=\uIsomint_c((G,T),(G^\sharp,T^\sharp))\underset{\uIsomint_c(G,G^\sharp)}{\times}\uIsom_G(G,E).\]
We can regard $\cF$ as a subsheaf of $\uIsom_G(G,E)$ through the second projection $p_2$.  
Note that $\uN_G(T)$ acts diagonally on the right of both factors of $\cF$.
Clearly $p_2$ is an $\uN_G(T)$-equivariant morphism. 
Let $\pi:\cF\to \uIsomint_c(\Phi,\Phi^\sharp)$ be the projection from $\cF$ to $\uIsomint_c((G,T),(G^\sharp,T^\sharp))$ composed with the natural morphism from $\uIsomint_c((G,T),(G^\sharp,T^\sharp))$ to $\uIsomint_c(\Phi,\Phi^\sharp).$

% Suppose $\uIsomint_c(\Psi,\Psi')(S)\neq\emptyset$.
Let $f\in\uIsomint_c(\Phi,\Phi^\sharp)(S)$. Then the fiber of $\pi$ at $f$ is a right $T$-torsor and we denote it by $F$.
Note that $F$ embeds in $E$ via $\iota\circ p_2$.
Hence we have $E= F \wedge^T G$.

\smallskip

\noindent $(ii) \Longrightarrow (i)$. We suppose $E=F\land^T G$ for some $T$-torsor $F$.
Then $G^\sharp=E\land^G G=F\land^T G$, and $T^\sharp=F\land^T T$ is a maximal torus of $G^\sharp$.
% Hence (1) holds.
It remains to show that $\uIsomint_c(\Phi,\Phi^\sharp)(S)\neq\emptyset$.

Let $S'\to S$ be a $fppf$-cover that trivializes $F$.
We choose a trivialization $f:T_{S'}\xrightarrow{\sim} F_{S'}$.
Then $f$ induces an isomorphism between $(G_{S'},T_{S'})$ and $(G^\sharp_{S'},T^\sharp_{S'})$ with respect to the orientation $c$, 
which in turn gives an isomorphism $f^\diamond \in\uIsomint_c(\Phi,\Phi^\sharp)(S')$.

Set $S''=S'\times_S S'$.
Let $r_1$ and $r_2$ be the projections of $S''$  to its first and second factors respectively. 
Then $(r_2^\ast f)^{-1}\circ(r_1^\ast f)$ is an automorphism of $T_{S''}$ (as a $T_{S''}$-torsor),
which  corresponds to  the left multiplication by some element $t\in T(S'')$.
As $T$ acts trivially on itself by conjugation, $(r_2^\ast f)^{-1}\circ(r_1^\ast f)$ induces trivial automorphism on $\Psi_{S''}.$
Hence $r_2^\ast (f^\diamond )=r_1^\ast (f^\diamond )$ and $f^\diamond $ descends to an element in
$\uIsomint_c(\Phi,\Phi^\sharp)(S)$. Thus $\uIsomint_c(\Phi,\Phi^\sharp)(S) \not = \emptyset$ 
as desired.
\end{proof}

\smallskip

Denote by $G//G$ the adjoint quotient of $G$ in the GIT framework, 
its formation commutes with arbitrary base change \cite[\S 4]{L2}.
Suppose that $G$ has a maximal torus $T$.
Let $\uN_G(T)$ be the normalizer of $T$ in $G$ and set $W_G(T)=\uN_G(T)/T$. 
The conjugation action of $\uN_G(T)$ on $T$ induces an action of the Weyl group $W_G(T)$ on $T$.
Let $T/W_G(T)$ be the quotient sheaf of $T$ by the action of Weyl group $W_G(T)$.
Then $T/W_G(T)$ is isomorphic to $G//G$ (\cite{L2} Thm.\ 4.1).

Consider the quotient map (of $\fppf$-sheaves) $\pi: T\to T/W_G(T)$.
Let $x\in (G//G)(S)$. As $T/W_G(T)$ and $G//G$ are isomorphic, we can regard  $x$ as an element in $(T/W_G(T))(S).$
Suppose that there is a semisimple regular element $t_1\in G(S)$ whose image in $(G//G)(S)$ is $x$.
Let $T_1$ be the centralizer $\uC_G(t_1)$ so that $T_1$ is also a maximal torus.
Let $\Phi$ (resp.\ $\Phi_1$) be the root datum associated to $T$ (resp.\ $T_1$). 
Then the natural inclusion of $T$ into $G$ gives an orientation $v \in\uIsomext(G, \Phi)(S)$.
Similarly, we get an orientation $v_1\in \uIsomext(G,\Phi_1)(S)$ from the natural inclusion of $T_1$ into $G$.
Then by  \ref{pairing} (2) we get an orientation
$u\in \uIsomext (\Phi,\Phi_1)(S)$ coming from $v\cdot v_1$.
Then we have the following proposition.

\begin{proposition}\label{W-torsor}
The left $W_G(T)$-torsor $\pi^{-1}(x)$ is isomorphic to $\uIsomint_u(\Phi,\Phi_1)$.
\end{proposition}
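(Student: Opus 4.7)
The plan is to construct an explicit morphism $\Theta : \pi^{-1}(x) \to \uIsomint_u(\Psi,\Psi_1)$ of fppf $S$-sheaves and show it is a $W(T)$-equivariant morphism between torsors under the same finite étale group scheme, whence an isomorphism. Given an $S$-scheme $S'$ and $t \in \pi^{-1}(x)(S')$, both $t$ and the regular semisimple element $t_1$ have the same image $x \in (G//G)(S')$, so fppf-locally on $S'$ they are $G$-conjugate; pick $g \in G(S'')$ with $g\, t_1\, g^{-1} = t$. Regularity of $t_1$ yields $\uC_G(t_1)=T_1$, hence $g$ is unique up to right multiplication by $T_1(S'')$, and regularity of $t$ forces $\uC_G(t) = T$, so that $g T_1 g^{-1} = T$. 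The conjugation $\mathrm{Int}(g) : T_1 \to T$ then induces an isomorphism of root data $\mathrm{Int}(g)^* : \Phi(G,T) \to \Phi(G,T_1)$, i.e.\ an element of $\uIsom(\Psi,\Psi_1)(S'')$. Because $T_1$ acts trivially on $X^*(T_1)$ and preserves every root, this element is independent of the choice of $g$ and descends to $\Theta(t) \in \uIsom(\Psi,\Psi_1)(S')$.

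Next I would verify that $\Theta(t)$ always has orientation $u$. This is essentially tautological from the construction of $u$: representing $v \in \uIsomext(G,\Psi)$ by $\mathrm{id}_{\Phi(G,T)=\Psi}$ and representing $v_1 \in \uIsomext(G,\Psi_1) = \uIsomext(\Phi(G,T),\Psi_1)$ by $\mathrm{Int}(g)^*: \Phi(G,T) \to \Phi(G,T_1) = \Psi_1$ (well-defined modulo $W(\Psi_1)$), the pairing (2) of Proposition \ref{pairing} outputs the class of $\mathrm{Int}(g)^*$, which is by definition both $u$ and $\Theta(t)$. Then I would check equivariance. Lifting $w \in W(T)(S')$ locally to $n \in \uN_G(T)(S'')$, the action is $w \cdot t = n t n^{-1}$, and a valid conjugator of $t_1$ to $w\cdot t$ is $n g$, so
\[
\Theta(w \cdot t) \;=\; \mathrm{Int}(ng)^* \;=\; \mathrm{Int}(g)^* \circ \mathrm{Int}(n)^* \;=\; \Theta(t) \circ w^{op},
\]
where $w^{op}$ is the image of $w$ under the isomorphism $W(T) \simeq W(\Psi)^{op}$ recalled in \S 3.1. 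Thus $\Theta$ intertwines the left $W(T)$-action on $\pi^{-1}(x)$ with the right $W(\Psi)$-action on $\uIsomint_u(\Psi,\Psi_1)$, which under $W(T) \simeq W(\Psi)^{op}$ is the desired left $W(T)$-torsor structure.

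Since $t_1$ is regular, its stabilizer in $W(T)$ is trivial, so $\pi^{-1}(x)$ is a left $W(T)$-torsor; and $\uIsomint_u(\Psi,\Psi_1)$, being a fiber of $\uIsom(\Psi,\Psi_1) \to W(\Psi_1)\backslash \uIsom(\Psi,\Psi_1)$, carries a commuting free and fibrewise transitive right $W(\Psi)$-action, making it a left $W(T)$-torsor under $W(T) \simeq W(\Psi)^{op}$. A $W(T)$-equivariant morphism of $W(T)$-torsors is automatically an isomorphism, which concludes the proof. The main obstacle I expect is administrative rather than conceptual: keeping all the left/right and opposite-group conventions aligned, in particular verifying that $\mathrm{Int}(n)^*$ on $X^*(T)$ corresponds to $w^{op} \in W(\Psi)$ with the sign convention $\tilde{s}_\alpha(t) = t\bigl(\alpha^\vee(\alpha(t))\bigr)^{-1}$ of the excerpt, and secondarily the fppf descent in the definition of $\Theta$, whose key input is precisely that the local indeterminacy of $g$ takes values in $T_1$, which acts trivially on $\Phi(G,T_1)$.
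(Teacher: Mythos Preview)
Your proof is correct and follows essentially the same strategy as the paper's: define the map via conjugation (the paper works with the transporter $\uTransp(t,t_1)$, whose elements are the inverses of your $g$), use that the centralizing torus acts trivially on its own characters to obtain well-definedness and descent, and verify $W(T)$-equivariance by lifting to $\uN_G(T)$. The only differences are cosmetic---the direction of conjugation---and that you spell out the orientation check for $u$ more explicitly than the paper does; one small wording slip is that $t_1$ need not lie in $T$, so rather than ``its stabilizer in $W(T)$'' you mean that each $t\in\pi^{-1}(x)$, being regular in $T$, has trivial $W(T)$-stabilizer.
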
 

\begin{proof}
For an $S$-scheme $S'$, we denote the image of $t_1$  in $G(S')$ by $t_{1,S'}$. For $t\in \pi^{-1}(x)(S’)$, we denote by $\uTransp(t,t_{1,S'})$ the transporter from $t$ to $t_{1,S'}$.
Both $t$ and $t_{1,S'}$ are mapped to the same element in $(G//G)(S')$, and hence $\uTransp(t,t_{1,S'})\to \bullet$ is surjective.
Since $t_1$ is a semisimple regular element, so is $t$.
As $t$ lies in $T$,  the centralizer $\uC_G(t)$ of $t$ is the torus $T_{S’}$ and $\uTransp(t,t_{1,S'})$ is a right $T_{S’}$-torsor.

The conjugation action of $T$ on $\Phi=\Phi(G,T)$ is trivial.
Therefore the canonical morphism $$\uTransp(t,t_{1,S'})\land^{T_{S'}}\Phi_{S'}\to \Phi_{1,S'}$$ defines an element in $\uIsomint_{u}(\Phi,\Phi_1)(S’) $.
In this way, we get a map $$i:\pi^{-1}(x)\to\uIsomint_{u}(\Psi,\Psi_1).$$
To be precise, write the root datum $\Phi$ as $(\cM,\cM^\vee,\cR,\cR^\vee)$.
For an $S'$-scheme $S''$ and $h\in\uTransp(t,t_{1,S'})(S'')$, $i(t)(m)=m\circ int (h^{-1})$ for all $m\in \cM_{S^{\prime\prime}}=\Hom_{S^{\prime\prime}}(T_{S^{\prime\prime}}, \G_{m,S^{\prime\prime}})$.
Note that $i(t)$ is independant of the choice of $h$ since $T$ acts trivially on $\cM$. 

As $\uIsomint_{u}(\Psi,\Psi_1)$ is a right $W(\Psi)$-torsor and $W_G(T)\simeq W(\Psi)^{op}$, we can regard it as left $W_G(T)$-torsor.
We write down explicitly this action.
For $w\in W_G(T)_{S'}$, let $n_w$ be a lift of $w$ in $N(T)(S'')$ for some $S'$-scheme $S''$.  
As $T$ acts on $\cM$ trivially,  $int (n^{-1}_w)$ induces an automorphism on $\cM_{S'}$.
 For $f\in \uIsomint_{u}(\Phi,\Phi_1)(S’)$,
 $(w\cdot f)(m):=f(m\circ int (n_w))$ for $m\in \cM_{S’}=\Hom_{S’-grp}(T_{S’},\G_{m,S’})$.

Since $\pi^{-1}(x)$ and $\uIsomint_u(\Phi,\Phi_1)$ are both $W_G(T)$-torsors, to check $i$ is an isomorphism, it suffices to check the map $i$ is $W_G(T)$-equivariant.
Let $w\in W_G(T)(S')$ and $n_w$ be a lift of $w$ in $\uN_G(T)(S'')$ for some $S'$-scheme $S''$.  
For  $h\in \uTransp(t,t_{1,S'})(S'')$,  $hn^{-1}_w\in  \uTransp(w\cdot t,t_{1,S'})(S'')$. 
Thus
\begin{equation}
\begin{aligned} 
i(w\cdot t)(m) & =m\circ int((hn^{-1}_w)^{-1})\\
&=m\circ int(n_w)\circ int(h^{-1})\\
&=i(t)(m\circ int(n_w))\\
 &=(w\cdot i(t))(m). 
\end{aligned}
\end{equation}
Hence $i$ is an isomorphism between $W_G(T)$-torsors.
\end{proof}

\subsection{Embedding functors}
Let $G$ be a reductive group scheme over $S$ and  \break $\Psi= (\cM, \cM^\vee, \cR, \cR^\vee)$ be an admissible root datum for $G$.
Let $T$ be the $S$-torus with sheaf of characters $\cM$.

Suppose there is an embedding $f$ of $T$ into $G$ as algebraic group schemes.
Then $f$ induces an isomorphism from the character group of $f(T)$ to $\cM$.
We call this map $f^\sharp$.
If $f^\sharp$ induces an isomorphism between $\Phi(G,f(T))$ and $\Psi$, then we say $f$ is an embedding with respect to the twisted root datum $\Psi$.

Define the \emph{embedding
functor} $ \gE(G,\Psi)$ as follows: for each $S$-scheme $S'$,  \[ \gE(G,\Psi)(S')=\left\{\begin{array}{l}
\mbox{$f:T_{S'}\hookrightarrow G_{S'}$}\left|\begin{array}{l}\mbox{$
f$ is both a closed
immersion and a group }\\
 \mbox{homomorphism that induces an isomorphism
}\\
\mbox{$f^{\sharp}:\Phi(G_{S'},f(T_{S'}))\xrightarrow{\sim}\Psi_{S'}$
}\\

\end{array}\right.\end{array}\right\}\]

Note that the fact that $\Psi$ is admissible for $G$ ensures that $\gE(G,\Psi)$ is not empty and allows us to fix an orientation $v \in \uIsomext(G,\Psi)(S)$.

The
\emph{oriented embedding functor} $\gE(G,\Psi,v)$  over $S$ is defined by

\[\gE(G,\Psi,v)(S')=\left\{\begin{array}{l}\mbox{$f:T_{S'}\hookrightarrow G_{S'}$}\left |
\begin{array}{l}\mbox{$f\in\gE(G,\Psi)(S')$,
and the image of $f^{\sharp}$ } \\
\mbox{ in $\uIsomext(G,\Psi)(S')$ is
$v$.}\end{array}\right.\end{array}\right\}\]

Since any two maximal tori of $G$ are \'etale locally conjugated by $G$,
the oriented embedding functor $\gE(G,\Psi,v)$ is representable by an affine $S$--scheme that  is a left homogeneous $G$-space (\cite[1.6]{L1}).  
For an $S$-scheme $S'$ and an element $f\in \gE(G,\Psi,v)(S')$, the stabilizer of $f$ in $G_{S'}$ is $f(T_{S'})$, which is a maximal torus of $G_{S'}$.
Recall that for $w\in W(\Psi)(S')$, $w$ induces an automorphism on $T_{S'}$ and we denote by $\tilde{w}$ its image in $\uAut(T)(S')$ (\S 3.1).
This gives a group monomorphism from $W(\Psi)^{op}$ to $\uAut(T)$.
The $S$--scheme $\gE(G,\Psi,v)$ admits a right  action of
$W(\Psi)^{op}$ and the quotient sheaf $\gE(G,\Psi,v)/W(\Psi)^{op}$ identifies with the
$S$-scheme $\cT_G$ of maximal tori of $G$. (See \cite[\S 1.1 and \S1.2]{L1} for more details.)

\begin{sremark}{\rm
In \cite[\S 1.2.2]{L1} we define the oriented embedding functor $\gE(G,\Psi,v')$ by orientation $v'\in \uIsomext(\Psi,G)(S)$ instead of $v\in \uIsomext(G,\Psi)(S)$ here.
However when $v'$ is the image of $v$ under the canonical isomorphism $\uIsomext(G,\Psi)\to\uIsomext(\Psi,G)$, these two definitions are clearly equivalent.
}
\end{sremark}

\begin{slemma} \label{lem_compatibilities}
Keep the notation as above.
\smallskip

 (1) We have an isomorphism $W(\Psi) \simlgr \uAut_G(\gE(G,\Psi,v))$ as group schemes.

\smallskip

(2) Let $\Psi'$ be another admissible root datum for $G$,
and $v' \in \Isomext(G,\Psi')(S)$. Let $u \in \uIsomext(\Psi,\Psi')(S)$ be the orientation  $v’ \cdot v$  by the pairing (\ref{pairing}). Under the identification of $W(\Psi)$ and $\uAut_G(\gE(G,\Psi,v))$ in (1), there is a natural isomorphism between
 $\uIsomint_u( \Psi, \Psi')$ and $\uIsom_G(\gE(G,\Psi,v), \gE(G,\Psi',v'))$ as right $W(\Psi)$--torsors.

\smallskip

(3)
Let $F$ be a right ${W}(\Psi)$-torsor and $\Psi'=F\land^{W(\Psi)}\Psi$.
Let $c$ be the canonical element of $\uIsomext(\Psi,\Psi')(S)$, and
 let $v_1\in \uIsomext(G, {\Psi'})$ be  $c\cdot v$
 by the pairing (\ref{pairing}). 
By identifying $W(\Psi)$ with $\uAut_G(\gE(G,\Psi,v))$, $F\land^{W(\Psi)}\gE(G,\Psi,v)$
is isomorphic to  $\gE(G, {\Psi'},v_1)$.

\smallskip

(4)  Let $E$ be a right $G$--torsor and denote by $G'$ the twist of $G$ by $E$ via inner automorphisms.
Let $c$ be the canonical element of $\uIsomext(G',G)(S)$.
Then the homogeneous $G'$-space  $E \land^{G} \gE(G,\Psi,v)$ is isomorphic
to   $\gE(G',\Psi,v')$ where $v' \in \uIsomext(G',\Psi)(S)$ is  $c\cdot v$ by the pairing  (\ref{pairing}).
\smallskip

\iffalse

 %\iffalse

\smallskip

(5) Let $\Psi'$ be another admissible root datum for $G$ and let
$v' \in \Isomext(G,\Psi')$. Then the $S$--functor
 $\uIsom( (\Psi',c'), (\Psi,c))$ is representable by a left $W(\Psi)$--torsor $J$
and  there is a natural isomorphism  $\gE(G,\Psi,v) \land^{W(\Psi)} J  \simlgr \gE(G,\Psi',v')$.
\fi

\end{slemma}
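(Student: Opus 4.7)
My plan is to leverage Lemma \ref{lem_quotient} to reduce all four parts to a concrete analysis of stabilizers, normalizers, and orientations: I would prove (1) first, use it to construct the isomorphism in (2), and then deduce (3) and (4) from the twist formalism provided by Lemma \ref{lem_quotient}(3)--(4).

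For (1), I would apply Lemma \ref{lem_quotient}(1) to the $G$-homogeneous space $\gE(G,\Psi,v)$. At any $S'$-point $f: T_{S'} \hookrightarrow G_{S'}$ the stabilizer is the maximal torus $f(T_{S'})$, so
\[
\uAut_G(\gE(G,\Psi,v))^{op} \simeq \uN_G(f(T_{S'}))/f(T_{S'}) \simeq W(f(T_{S'})).
\]
Via $f$ this identifies with $W(T)$, which is canonically $W(\Psi)^{op}$ by the formulas of \S 2.1; taking opposites yields a candidate isomorphism $W(\Psi) \simlgr \uAut_G(\gE(G,\Psi,v))$. Two local choices of $f$ differ by a $G$-inner automorphism of the homogeneous space, and inner automorphisms act trivially on $\uIsomext(G,\Psi)$ by its very definition (quotient by $G_{ad}$); fppf descent therefore produces a global isomorphism.

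For (2), my plan is to build a natural morphism
\[
\Phi : \uIsomint_u(\Psi,\Psi') \longrightarrow \uIsom_G\bigl(\gE(G,\Psi,v),\gE(G,\Psi',v')\bigr).
\]
Given $\phi \in \uIsomint_u(\Psi,\Psi')(S')$ and $f \in \gE(G,\Psi,v)(S')$, the composite $\phi \circ f^\sharp : \Phi(G_{S'}, f(T_{S'})) \xrightarrow{\sim} \Psi'_{S'}$ determines by character-cocharacter duality a unique isomorphism $T'_{S'} \simeq f(T_{S'})$; postcomposing with $f(T_{S'}) \hookrightarrow G_{S'}$ yields an embedding $f_\phi : T'_{S'} \hookrightarrow G_{S'}$. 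The hypothesis $u = v' \cdot v$, read through Proposition \ref{pairing}(2), guarantees that $f_\phi \in \gE(G,\Psi',v')(S')$; setting $\Phi(\phi)(f) := f_\phi$ then gives a $G$-equivariant morphism which, under the identification from (1), is $W(\Psi)$-equivariant. Since both source and target are right $W(\Psi)$-torsors (the target by Lemma \ref{lem_quotient}(4)), it suffices to check nonemptiness after passing to an fppf cover trivializing $\uIsomint_u(\Psi,\Psi')$, after which $\Phi$ is automatically an isomorphism.

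For (3) and (4), I would combine (1)--(2) with Lemma \ref{lem_quotient}(3)--(4): the $S$-forms of $\gE(G,\Psi,v)$ as a $G$-space are classified by right $W(\Psi)$-torsors, the form attached to $F$ being $F \land^{W(\Psi)} \gE(G,\Psi,v)$. By (2), $\gE(G,\Psi',v_1)$ is the form attached to $\uIsomint_c(\Psi,\Psi')$, which by the very construction of $\Psi' = F \land^{W(\Psi)} \Psi$ is canonically $F$, yielding (3). For (4) the same scheme applies one level up, with $G$-torsors replacing $W(\Psi)$-torsors: fppf-locally $E$ trivializes, $G \simeq G'$, and both sides become $\gE(G,\Psi,v)$, while the canonical class $c \in \uIsomext(G',G)(S)$ transports $v$ to $v' = c \cdot v$ by Proposition \ref{pairing}(4), matching the prescribed orientation. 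The main obstacle I expect is the bookkeeping in (2): one must verify precisely that the image of $f_\phi^\sharp$ in $\uIsomext(G,\Psi')$ equals $v'$, which requires careful tracking of the canonical identification $\uIsomext(G,\Psi) \simeq \uIsomext(\Psi,G)$ together with the direction of composition in the pairings of Proposition \ref{pairing}.
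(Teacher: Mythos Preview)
Your proposal is correct and, for parts (1) and (2), essentially identical to the paper's argument: the paper defines the global homomorphism $\iota(w)(x)=x\circ\tilde w$ directly (which is what your local identification amounts to once unwound) and then, exactly as you suggest, checks $W(\Psi)$--equivariance of the map $\eta(\phi)(x)=x\circ\tilde\phi$ between two right $W(\Psi)$--torsors to conclude (2).

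For (3) and (4) you take a mildly different route. You deduce them from the classification of $S$--forms in Lemma~\ref{lem_quotient}(4) together with (2), identifying the relevant $W(\Psi)$--torsor with $\uIsomint_c(\Psi,\Psi')\cong F$. The paper instead writes down explicit maps $\varsigma:F\times\gE(G,\Psi,v)\to\gE(G,\Psi',v_1)$, $(f,x)\mapsto x\circ\varphi_f$, and $\imath:E\times_S\gE(G,\Psi,v)\to\gE(G',\Psi,v')$, $(e,x)\mapsto f_e\circ x$, and checks they descend to the contracted products. Your approach is cleaner for (3); for (4), note that Lemma~\ref{lem_quotient}(4) classifies forms as $G$--spaces, whereas $E\land^G\gE(G,\Psi,v)$ is a $G'$--space, so you would still need to produce a global $G'$--equivariant map rather than just observe local agreement --- at which point you are essentially writing down the paper's map $\imath$ anyway.
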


\begin{proof}
(1) For an $S$-scheme $S'$ and $w\in W(\Psi)(S')$, we denote by $\tilde{w}$ the image of $w$ in $\uAut(T)(S')$   (see \S 3.1).
Define a morphism $\iota:W(\Psi)\to \uAut_G(\gE(G,\Psi,v))$ as follows.
For an $S$-scheme $S'$ and  $w\in W(\Psi)(S')$,
\[\iota(w)(x):=x\circ \tilde{w} \mbox{  for all $S'$-schemes $S''$,  and $x\in \gE(G,\Psi,v)(S'')$.}\]

For $w_1$ and $w_2\in W(\Psi)(S')$,
\begin{equation}
\begin{aligned}
\iota(w_1\circ w_2)(x)&=x\circ\widetilde{(w_1\circ w_2)}\\
&=x\circ\tilde w_2\circ\tilde w_1\\
&=(\iota(w_1)\circ\iota(w_2))(x).
\end{aligned}
\end{equation}
Hence $\iota$ is a group homomorphism. 

Clearly $\iota$ is monomorphism.
 To see that $\iota$ is an isomorphism, it suffices to check $\iota$ is an isomorphism of fppf-sheaves.
 Let $\{S_i\}$ be a $fppf$-cover of $S$ such that $G_{S_i}$ and $\Psi_{S_i}$ are both split.
 Then $\gE(G,\Psi,v)(S_i)$ is not empty.
 Choose $x_i\in\gE(G,\Psi,v)(S_i)$. By  Lemma \ref{lem_quotient},
 $\uAut_{G}(\gE(G,\Psi,v))(S_i)$ is isomorphic to $(\uN_G(x_i(T))/x_i(T))^{op}$, where $\uN_G(x_i(T))$ is the normalizer of $x_i(T)$ in $G$.
Since $\uN_{G}(x_i(T))/x_i(T)\simeq {W}(\Phi(G_{S_i},x_i(T_{S_i})))^{op}$,  the group $\uN_{G}(x_i(T))/x_i(T)$ is isomorphic to ${W}(\Psi_{S_i})^{op}.$
Thus $\uAut_{G}(\gE(G,\Psi,v))_{S_i}\simeq W(\Psi_{S_i})$.
As $\iota$ is a monomorphism and $\uAut_{G}(\gE(G,\Psi,v))_{S_i}$ is isomorphic to $W(\Psi_{S_i})$, $\iota$ is an isomorphism.

\smallskip

\noindent (2)
Write $\Psi'=(\cM',(\cM')^\vee, \cR',(\cR')^\vee)$ and denote by $T'$ the torus with sheaf of characters $\cM'$.
For an $S$-scheme $S'$ and $f \in\uIsomint_u(\Psi,\Psi')(S')$, we denote by $\tilde{f}$ the isomorphism from $T'_{S'}$ to $T_{S'}$ induced by $f$.
Define $\eta:\uIsomint_u(\Psi,\Psi')\to \uIsom_G(\gE(G,\Psi,v),\gE(G,\Psi',v'))$ by $\eta(f)(x)=x\circ \tilde{f} $,
for all $S'$-scheme $S''$, and $x\in\gE(G,\Psi,v)(S'')$.

Next we show that $\eta$ is compatible with the $W(\Psi)$-action under the identification of $W(\Psi)$ and $\uAut_G(\gE(G,\Psi,v))$.
Let $w\in W(\Psi)(S')$.
Then
\begin{equation}
\begin{aligned}
\eta(f\circ w)(x)&=x\circ (\widetilde{f\circ w})\\
&=x\circ \tilde{w}  \circ \tilde{f}\\
&=\eta(f)(x\circ\tilde{w})\\
&=\eta(f)( \iota(w)(x))\\
&=(\eta (f)\circ\iota(w))(x).
\end{aligned}
\end{equation}

This shows that $\eta $ is compatible with the $W(\Psi)$-action.
Since $\uIsomint_u(\Psi,\Psi')$ and $\uIsom_G(\gE(G,\Psi,v),\gE(G,\Psi',v'))$ are both right $W(\Psi)$-torsors, this implies that $\eta$ is an isomorphism.

\noindent (3)
Write $\Psi'$ as $(\cM',(\cM')^\vee,\cR',(\cR')^\vee)$ and let $T'$ be the torus with character group $\cM'$.
For an $S$-scheme $S'$ and $f\in F (S')$, we define $\varphi_f^\sharp:\cM_{S'}\to\cM'_{S'}$ as
$\varphi_f^\sharp(m)=(f,m)$ for all $S'$-scheme $S''$ and $m\in \cM_{S'}(S'')$.
Denote by $\varphi_f$ the group homomorphism from $T'$ to $T$ defined by $\varphi_f^\sharp$.

Define $\varsigma:F\times \gE(G,\Psi, v)\to\gE(G,\Psi',v')$ as follows.
For an $S$-scheme $S'$ and $(f,x)\in (F\times\gE(G,\Psi,v))(S')$, let
$\varsigma(f,x)=x\circ \varphi_f$. Clearly $x\circ \varphi_f$ is an embedding of $T'$ in $G$
with orientation $v'$.
Note that for $w\in W(\Psi)(S')$, we have
\begin{equation}
\begin{aligned}
 \varphi_{fw}^\sharp(m)&=(fw,m)\\
 &=(f,w^{-1}m)\\
 &=\varphi_f^\sharp(w^{-1}m).
\end{aligned}
\end{equation}
Therefore $\varphi_{fw}^\sharp=\varphi_f^\sharp\circ w^{-1}$ and
$\varphi_{fw}=\widetilde{w}^{-1}\circ\varphi_f$.
It follows that
\begin{equation}
\begin{aligned}
\varsigma(fw,x)&=x\circ \varphi_{fw}\\
&=x\circ \widetilde{w}^{-1}\circ \varphi_f\\
&=(w^{-1}\cdot x)\circ\varphi_f\\
&=\varsigma(f,w\cdot x).
\end{aligned}
\end{equation}
Hence $\varsigma$ induces a morphism from $F\land^{W(\Psi)}\gE(G,\Psi,v)$ to
$\gE(G,\Psi',v')$, that is clearly an isomorphism.

\smallskip

\noindent (4)
For an $S$-scheme $S'$ and $e\in E(S')$, we define $f_e: G_{S'}\to G'_{S'}$ by 
$f_e(g)=[e,g]$ for all $S'$-scheme $S''$ and $g\in G(S'')$. Then $f_e$ is an isomorphism between $G_{S'}$ and $G'_{S'}$.

For $y=(e,x)\in (E\times_S \gE(G,\Psi, v))(S')$,
we define \[\imath: E\times_S \gE(G,\Psi,v)\to \gE(G',\Psi,v')\] as
$\imath(y)=f_e\circ x$. It is clear that $\imath(y)\in\gE(G',\Psi)(S').$
As $x$ is an embedding of orientation $v$, by the definition of canonical orientation, the embedding  $\imath(y)$ is of orientation $v'$.
Since $\imath(e,x)=\imath(e\cdot g,g^{-1}\cdot x)$ for all $g\in G(S')$, $\imath$ induces an isomorhism from $E\land^G \gE(G,\Psi, v)$ to $\gE(G',\Psi,v')$.
\end{proof}

\begin{sremark}\label{rem_behaviour}{\rm
Let $G^{sc}$ be the simply connected cover of $G$
and let $G_{ad}$ be its adjoint quotient.
On the other hand let $\Psi^{sc}$ (resp.\ $\Psi_{ad}$)
be the simply connected (resp.\ adjoint) root datum associated to
$\Psi$.
Then we can attach to $(\Psi,v)$ an oriented
root datum $(\Psi^{sc},v^{sc})$ (resp.\ $(\Psi_{ad},v_{ad})$)
and isomorphisms
$$
\gE(G^{sc},\Psi^{sc},v^{sc}) \simlgr \gE(G,\Psi,v) \simlgr \gE(G_{ad},\Psi_{ad},v_{ad}).
$$
It implies that we can deal in practice with semisimple simply connected
(resp.\, adjoint) group schemes.
Another advantage is that we have isomorphisms of finite \'etale
group schemes
$$
\uIsomext(G^{sc}, \Psi^{sc} ) \simlgr  \uIsomext(G_{ad}, \Psi_{ad} )
\simlgr \uIsom( \uDyn(G), \uDyn(\Psi)).
$$
} (See \cite[Cor.\ 1.7]{L1}.)
\end{sremark}

\subsection{Split and quasi-split cases} \label{subsec_quasi_split}
We denote by $\gg=\Lie(G)$ the Lie algebra of $G$, this is 
an $G-\cO_S$--module.
We remind the reader of \S XXII.1,  XXIV.3.8 and 3.9 of \cite{SGA3}.

A {\it splitting} of $G$ is the data of a maximal split $S$--torus $T$ of $G$
together with a root  system $(M,R)$ in the sense of \cite[XIX 3.6]{SGA3}, 
an isomorphism $T \cong D_S(M)$ such that
\smallskip

(D1) $S$ is not empty and the roots  of $R$ (resp.\ the corresponding
coroots) of  $M$ (resp. $M^\vee$) are identified
with constant functions of $S$ in $M$ (resp. $M^\vee$).

\smallskip

(D2) For each $\alpha \in R$, the eigenspace $\gg_\alpha$ for 
the adjoint representation is a free invertible $\cO_S$--module.

\smallskip

The reductive $S$--group scheme $G$ together with a splitting
is called a {\it split reductive $S$--group scheme}.

A Killing couple for $G$ is a pair $(T,B)$ where 
$T$ is a maximal $S$--torus of $G$ and
$B$ is an $S$-Borel subgroup scheme of $G$ containing $T$.
Let $(T,B)$ be a Killing couple for $G$.
As in the field case, there exists a canonical monomorphism of 
$S$-sheaves $i: \uDyn(G) \to   \cM$  (where $\cM$ stands for the 
sheaf of characters of $T$) 
permitting to think about  $\uDyn(G)$ as the ``simple roots of $B$ relatively to $T$".
Next $\uDyn(G)$ is representable by the finite \'etale scheme 
$\bD=\Dyn(G)$.
We denote by $\alpha_\bD \in \cM(\bD)$ the universal root, that is, 
the image of $id_{\bD}$ by $i$. It gives rise to the associated eigenspace
$$\gg^\bD= \bigl( \gg \otimes_{\cO_S} \cO_{\bD} \bigr)^{\alpha_{\bD}}$$
 which is an invertible $\cO_{\bD}$-module and which
 is locally a direct summand of
  $\gg \otimes_{\cO_S} \cO_{\bD}$.
A {\it quasi-pinning} of $G$ is a triple $(T,B,X)$ where 
$(T,B)$ is a Killing couple as above and $X \in H^0(S, \gg^\bD)^\times$.
In other words, $X$ is a trivialization $\cO_{\bD} \simlgr \gg^\bD$
of the invertible $\cO_D$-module $\gg^\bD$.

We say that $G$ is {\it  quasi-split} if $G$ admits a quasi-pinning,
this is equivalent to require that $G$ admits a 
Killing couple if $S$ is a semilocal scheme \cite[XXIV.3.9.1]{SGA3}.
Those group schemes are quite easy to classify by relevant finite
\'etale covers, see \cite[XXIV.3.11]{SGA3}.
Furthermore a reductive $S$--group scheme
is a inner twisted $S$-form of a reductive quasi-split $S$--group
scheme \cite[\S 8.1]{Gi2015}.

\begin{remark} {\rm If $G$ is split with root system $(M,R)$,
 then   $\uDyn(G)$ identifies 
with the  finite constant 
finite scheme associated to  the Dynkin diagram $\Delta$  of $(M,R)$. 
In this case  $X$  is 
the data of generators $(X_\alpha)_{\alpha \in \Delta}$ of the root spaces $\gg_\alpha$ 
for $\alpha \in \Delta$.  \rm}
\end{remark}

\subsection{The main result}

\begin{stheorem}\label{thm_main}
Let $X$ be a separated $S$-scheme.
Assume that one of the following conditions is satisfied.

\smallskip

(i) $S$ is locally noetherian;

\smallskip

 (ii) $X$ is locally of finite type over $S$.

\smallskip

\noindent We assume that $X$ is a left homogeneous space under a reductive $S$--group 
scheme $G$
and that for each point $s \in S$, the stabilizer of $X_{\ol s}$ is a maximal torus  of $G_{\ol s}$.

\smallskip

(1) There exists a twisted root datum $\Psi= (\cM, \cM^\vee, \cR, \cR^\vee)$
that is admissible for $G$ and an orientation $v \in \uIsomext(G, \Psi)(S)$
such that $X$ is isomorphic to $\gE(G,\Psi,v)$ as left $G$-homogeneous space.

\smallskip

(2) The oriented root datum $(\Psi,v)$ in (1) is unique up to isomorphism.

\end{stheorem}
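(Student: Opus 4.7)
The plan is to build $(\Psi, v)$ by fppf descent of the local data provided by sections of $X$, exploiting that a maximal torus acts trivially on its own root datum, and then to deduce the classification from Lemmas \ref{lem_quotient} and \ref{lem_compatibilities}.

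For existence, I would first work fppf-locally. Since $X \to S$ is an fppf epimorphism, choose an fppf cover $\{U_i \to S\}$ admitting sections $x_i \in X(U_i)$. Set $T_i = \uStab(x_i) \subset G_{U_i}$: by the hypothesis on geometric fibers (combined with the representability conditions (i) or (ii)), $T_i$ is a maximal torus of $G_{U_i}$, and the orbit map $g \mapsto g \cdot x_i$ induces a $G_{U_i}$-equivariant isomorphism $G_{U_i}/T_i \simlgr X_{U_i}$. Writing $\Psi_i = \Phi(G_{U_i}, T_i)$ and $v_i \in \uIsomext(G_{U_i}, \Psi_i)(U_i)$ for the canonical orientation arising from the inclusion $T_i \hookrightarrow G_{U_i}$, this exhibits an isomorphism $X_{U_i} \simlgr \gE(G_{U_i}, \Psi_i, v_i)$.

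To descend to $S$, I would compare over $U_{ij} = U_i \times_S U_j$. Fppf-locally on $U_{ij}$ the sections satisfy $x_j = g_{ij} \cdot x_i$ for some $g_{ij} \in G$, forcing $T_j = g_{ij} T_i g_{ij}^{-1}$ and yielding a transition isomorphism $\Ad(g_{ij}) \colon \Psi_i \simlgr \Psi_j$. The choice of $g_{ij}$ is unique modulo right translation by $\uStab(x_i) = T_i$; crucially, $T_i$ acts trivially by conjugation on its own root datum $\Phi(G, T_i) = \Psi_i$, so $\Ad(g_{ij})$ is independent of the choice of $g_{ij}$ and thus descends to a canonical isomorphism over $U_{ij}$, automatically satisfying the cocycle condition. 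This produces a twisted root datum $\Psi$ on $S$ with $\Psi|_{U_i} \simeq \Psi_i$, and the compatibility of the $v_i$ with the transition isomorphisms (via the functoriality packaged in Proposition \ref{pairing}) descends them to $v \in \uIsomext(G, \Psi)(S)$. Since the local isomorphisms $X_{U_i} \simlgr \gE(G_{U_i}, \Psi_i, v_i)$ intertwine the two descent data by construction, they glue to a $G$-isomorphism $X \simlgr \gE(G, \Psi, v)$.

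For uniqueness, suppose $X \simeq \gE(G, \Psi, v)$ and $X \simeq \gE(G, \Psi', v')$. Composing yields a global section of $\uIsom_G(\gE(G, \Psi, v), \gE(G, \Psi', v'))$, which by Lemma \ref{lem_compatibilities}(2) is a right $W(\Psi)$-torsor canonically isomorphic to $\uIsomint_u(\Psi, \Psi')$ for $u = v' \cdot v \in \uIsomext(\Psi, \Psi')(S)$ via the pairing of Proposition \ref{pairing}(2). Hence $\uIsomint_u(\Psi, \Psi')(S) \neq \emptyset$, giving an isomorphism $\Psi \simlgr \Psi'$ whose class in $\uIsomext$ is $u$; by the definition of $u$ and Proposition \ref{pairing}(1), this isomorphism carries $v$ to $v'$, producing the desired isomorphism of oriented root data. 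The principal obstacle is the descent step: one must confirm that the transition isomorphisms $\Ad(g_{ij})$ are canonical (not merely well-defined modulo the Weyl group) and that they genuinely descend the full twisted root datum structure including the root and coroot subsets. The essential input is that $T_i$ acts trivially by conjugation on $\Phi(G, T_i)$, which is the conceptual reason the hypothesis \emph{``stabilizers are maximal tori''} forces a clean classification; once canonicity is secured, the cocycle condition and the descent of $v$ are automatic, and the remainder follows formally from Lemmas \ref{lem_quotient} and \ref{lem_compatibilities}.
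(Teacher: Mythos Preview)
Your proof is correct, and your uniqueness argument coincides with the paper's. For existence, however, you take a genuinely different route. The paper proceeds by fixing a \emph{reference} homogeneous space: it passes to the adjoint group, chooses a quasi-split form $G^q$ with a maximal torus $T^q$ and root datum $\Psi^q$, and sets $X^q=\gE(G,\Psi^q,u)$. It then shows that $X$ is an $S$-form of $X^q$, so that $\uIsom_G(X^q,X)$ is a right $W(\Psi^q)$-torsor (via Lemma~\ref{lem_quotient}(4) and Lemma~\ref{lem_compatibilities}(1)); twisting $\Psi^q$ by this torsor and invoking Lemma~\ref{lem_compatibilities}(3) produces $(\Psi,v)$ with $X\cong\gE(G,\Psi,v)$.

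Your approach instead builds $(\Psi,v)$ by direct fppf descent of the local data $(\Psi_i,v_i)=(\Phi(G_{U_i},T_i),\,\text{can})$, with the key observation that the transition isomorphisms $\Ad(g_{ij})$ are canonical because $T_i$ acts trivially on its own root datum. This is more elementary: it avoids the quasi-split detour and the form/torsor machinery of Lemma~\ref{lem_quotient}(4), and it makes transparent why the hypothesis ``stabilizers are maximal tori'' is exactly what is needed. The paper's approach, by contrast, packages everything into the single torsor $\uIsom_G(X^q,X)$, which is cleaner once the machinery is in place and connects more directly to the Weyl-group cohomology governing the classification. One small point you gloss over: verifying that the stabilizer $T_i$ is genuinely a maximal $U_i$-torus (not just fiberwise) requires the flatness and finite-presentation input from \cite[VI.1.2]{R} and \cite[X.4.9]{SGA3}, which is where the hypotheses (i)/(ii) enter; the paper spells this out explicitly.
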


\begin{proof} We shall establish first that the local stabilizers
of $X$ are maximal tori.
Since $G$ is smooth over $S$ with connected fibers,
$X$ is smooth of finite presentation over $S$  according to \cite[prop.\ VI.1.2]{R}.

\smallskip

\noindent (1) Let $(S_i)_{i \in I}$ be a fppf cover of $S$ such that $X_{S_i}$ is $G_{S_i}$--isomorphic
to $G_{S_i}/H_i$ where $H_i$ is an $S_i$-subgroup scheme of $G_{S_i}$ (see \cite [IV 6.7.3]{SGA3}).
Since $H_i$ is the stabilizer of a point of $X(S_i)$ (which is $S_i$--separated),
$H_i$ is  a closed $S_i$--subgroup scheme of $G_{S_i}$ and is in particular affine.

Again by \cite[prop.\ VI.1.2]{R}, $H_i$ is flat locally of finite presentation over $S_i$
so is affine flat  of finite presentation over $S_i$.
%Since $X$ is smooth of finite presentation, so is $H_i$ according
%to \cite{SGA3} (\textcolor{red}{quel expos\'e?}).
% this sentence was useless
As the geometric fibers of $H_i$ are tori, it follows that $H_i$ is an $S_i$--torus
\cite[X.4.9]{SGA3}. Therefore $H_i$ is a maximal $S_i$--torus of $G_{S_i}$ for each $i \in I$.

We denote by $G^q$ an inner quasi-split $S$--form of $G$.
Since $G$ is an inner form of $G^q$, there is a canonical orientation $c\in\uIsomext(G,G^q)(S)$.
Let $T^q$ be a maximal torus of $G^q$, $\Psi^q$ be the root datum  $\Phi(G^q,T^q)$ and $v^q\in\uIsomext(G^q,\Psi^q)(S)$ be the orientation induced by the inclusion of $T^q$ to $G^q$.
Let $u=c\cdot v^q\in \uIsomext(G,\Psi^q)(S)$ (by \ref{pairing}) and consider the homogeneous $G$--space $X^q=\gE(G,\Psi^q, u)$.
This is a left $G$ homogeneous space whose local stabilizers are maximal tori.

\smallskip
\noindent
{\bf Claim.} $X$ is an $S$-form of $X^q$.

Denote by  $\uIsom_G(X^q,X)$ the sheaf of $G$-space isomorphisms over $S$.
Since $\Phi(G_{S_i},H_i)$ is fppf-locally isomorphic to $\Psi^q_{S_i}$ over $S_i$,
we can find an fppf-cover ${S_{ij}}$ of $S_i$ such that there is $f\in X^q(S_{ij})$ with $f(T^q_{S{ij}})=H_{i,S_{ij}}$.
As the stabilizer $\uStab_{G_{S_{ij}}}(f)$ is $H_{i,S_{ij}}$, we have $X^q_{s_{ij}}\simeq G_{S_{ij}}/H_{i,S_{ij}}$ as $G_{S_{ij}}$-spaces.
Hence there is an isomorphism between $X^q$ and $X$ over $S_{ij}$ that preserves the $G$-structure.
This proves that $\uIsom_G(X^q,X)$ is nonempty.
The canonical isomorphism $\uIsom_G(X^q,X)\land^{\uAut_G(X^q)} X^q\to X$ then shows that $X$ is an $S$-form of $X^q$.
Our claim is established.

\smallskip

By identifying $W(\Psi^q)$ with $\uAut_G(X^q)$ as in Lemma \ref{lem_compatibilities} (1),
we denote by $\Psi$ the root datum $\uIsom_G(X^q,X)\land^{\uAut_G(X^q)} \Psi^q$.
Let $c'\in\uIsomext(\Psi^q,\Psi)(S)$ be the canonical orientation, and $v$ be the orientation
$u\cdot c'$ by pairing (\ref{pairing}).
We conclude that $X$ is isomorphic to the embedding functor $\gE(G,\Psi,v)$ by Lemma \ref{lem_compatibilities}(3).

\smallskip

\noindent (2)
It follows from Lemma \ref{lem_compatibilities}(2).
\end{proof}

\section{Weil restriction of twisted root data and isotypic decomposition of oriented embedding functors}
Let $h:S'\to S$ be a finite \'etale cover.
If $\cF'$ is an $S'$-functor we consider its Weil restriction $h_*(\cF')$
defined by $h_*(\cF')(X)= \cF'(X \times_S S')$ for every $S$-scheme
$X$ \cite[\S 7.6]{BLR}. 
In the following, for an $S$-scheme $X$,  we denote the base change of $h$ by $h_X$.

Let $Y'$ be an affine $S'$-scheme and consider the $S'$-functor $\Hom_{S'}(-, Y')$. The $S$-functor $h_*(\Hom_{S'}(-, Y'))$ is represented by 
an affine scheme  denoted by $\gR_{S'/S}(Y')$.

\begin{slemma} \label{lem_weil}
(1) Let  $G'$ be  a reductive (resp.\ semisimple, semisimple adjoint, 
semisimple simply connected) $S'$--group scheme.
Then $G=\gR_{S'/S}(G')$ is a reductive
(resp.\ semisimple, semisimple adjoint, 
semisimple simply connected)  $S$-group scheme.

\smallskip

\noindent (2)
Let  $T'$ be  an  $S'$--torus.
Then $\gR_{S'/S}(T')$ is an  $S$--torus $T$.
Furthermore we have a canonical isomorphism 
$ h_*(\cM')  \simlgr \cM $ where $\cM$ (resp.\ $\cM'$) stands for the character functor 
of $T$ (resp.\ $T'$). Similarly for the  cocharacter functors $\cN$ and $\cN'$, 
we have  canonical isomorphisms
$\cN  \simlgr h_*(\cN')$ and $\cN \simlgr \cM^\vee$.

\end{slemma}

\begin{proof} (1) We know  that $G$ is affine and smooth \cite[\S 7.6, prop.\ 5]{BLR}.
It remains to check that the  geometric fibers of $G$ are
reductive. Since Weil restriction commutes with base change, we
are reduced to the case $S=\Spec(k)$ where $k$ is
an algebraically  closed  field. Then $S'=\Spec(k) \sqcup \dots \sqcup \Spec(k)$ 
($d$ times) so that $G= G' \times_k \dots \times_k G'$.
Since a product of reductive algebraic groups is reductiven $G$ is reductive.
The other facts are clear since they can be checked fiberwise.

\smallskip

\noindent (2)  The $S$--group scheme $T=\gR_{S'/S}(T')$ is reductive by (1)
but also commutative so that $T$ is an $S$--torus.
We start with the cocharacter case.
For an $S$-scheme $X$, set $X'=S'\times_S X$. We have
\begin{align*}
\cN(X) &= \Hom_{X-gp}\bigl( \GG_{m,X}, \gR_{S'/S}(T') \times_S X \bigr)\\
\   &= \Hom_{X-gp}\bigl( \GG_{m,X}, R_{X '/X}(T'_{X'}) \bigr)  \\
\  &=  \Hom_{X'}( \GG_{m,X'}, T'_{X'}  ) \  (\mbox{by the definition of Weil restrition})\\
\  &=h_*(\cN')(X).
\end{align*}

For the character case, we use the norm $N_{S'/S}: \gR_{S'/S}(\GG_{m,S'}) \to \GG_{m,S}$
\cite[XVII, 6.3.13]{SGA4}. For an $S$-scheme $X$, we consider the map

\begin{align*}
h_*(\cM')(X)= \Hom_{X '-gp}(  T'_{X'} ,\GG_{m,X'} )
&\xrightarrow{h_*} \Hom_{X-gp}\bigl(  T_X , R_{X'/X}(\GG_{m,X '} ) \bigr)\\
& \to  \Hom_{X-gp}\bigl(  T_X , \GG_{m,X} ) \bigr)
=\cM(X),
\end{align*}
where the last map is the composition by the norm map.
We have then a morphism of $S$--functors $h_*(\cM') \to \cM$ 
and we claim that this is an isomorphism.
This is a map of fppf $S$--sheaves so up to localize for the \'etale topology
we can assume that $S'=S \sqcup \dots \sqcup S$ ($d$ times).
In this case we have $T'=T_1  \sqcup T_2 \dots \sqcup T_d$
where the $T_i$'s are $S$--tori.  In this case 
we have $R_{S'/S}(T')= T_1 \times_S \dots \times_S T_d$
the above map  can be explicited as follows  
 {\tiny
\[
\xymatrix{
\Hom_{X \times_S S'-gp}(  T'_{X'} ,\GG_{m,X \times_S S'} )
\ar[r]^{h_*} & \Hom_{X-gp}\bigl(  T_X , R_{X'/X}(\GG_{m,X \times_S S'} ) \bigr)
\ar[r] &  \Hom_{X-gp}\bigl(  T_X , \GG_{m,X} ) \bigr) \\
\prod_{i=1}^d \Hom_{X-gp}( T_{i,X} ,\GG_{m,X} ) \ar@{=}[u]
\ar[r]^{\prod \qquad} &\Hom_{X-gp}\bigl(  T_{1,X} \times_X T_{2,X} \dots  \times_X T_{d,X}, \GG_{m,X}^d  \bigr)
\ar[r]  \ar@{=}[u] &  \Hom_{X-gp}\bigl(  T_X , \GG_{m,X} ) \bigr)  \ar@{=}[u] .
}
\]
}
The bottom map applies an element $(\chi_i)_{i=1,...,d}$ 
on the character $\chi(t_1, \dots , t_d)= \chi_1(t_1) \dots \chi_d(t_d)$.
We conclude that the map $h_*(\cM') \to \cM$ is an isomorphism.
\end{proof}

Let $T'$ be an $S'$--torus and consider its Weil restriction  $T=\gR_{S'/S}(T')=h_*(T')$ 
which is an $S$-torus (Lemma \ref{lem_weil}.(2)).  
%An $S$-morphism $u:S' \simlgr S'$ induces an isomorphism
% $u_* : h_*(u^*(T')) \simlgr h_*(T)$
%and an $S$--morphism $u^*:  \widehat{h_*(T)} \to  h_*(  \widehat{u^*(T')}) $.

We consider  a root datum $\Psi'=(\cM', {\cM'}^\vee, \cR', {\cR'}^\vee)$
and would like to define its Weil restriction $h_*(\Psi')$
in such a way that it provides the product of twisted root data
in the case of the  split \'etale cover of rank $d$ (see 
Remark  \ref{rem_product}).

We cannot define it by an adjunction property because the
diagonal is not a morphism in the   
category of twisted root data.
One naive candidate  for the Weil restriction 
is $( h_*(\cM'), h_*({\cM'}^\vee), h_*(\cR'), h_*({\cR'}^\vee))$, 
but in the case of a split cover, it does not give the right answer
for roots and coroots.

\begin{sproposition} \label{prop_weil}
There exists a unique $S$--functor $\Psi' \mapsto h_*\Psi'$
from the category of twisted $S'$--root data  to the category of
 twisted $S$--root data  satisfying the 
following properties:

\smallskip
(i)  If  $\Psi'=(\cM', {\cM'}^\vee, \cR', {\cR'}^\vee)$, then 
$h_*\Psi'= ( h_*(\cM'), \cR,  h_*({\cM'}), \cR^\vee)$
for suitable $\cR$ and $\cR^\vee$;

\smallskip

(ii) For each $S$--scheme $X$, set $X'=X\times_S S'$. For each $X$--isomorphism
\break $u: X \times_S S' \to X \times_S S'$,  and  for
each  twisted root $X'$--datum  $\Psi'=(\cM', {\cM'}^\vee, \cR', {\cR'}^\vee)$,
 the automorphisms   $c_u: u^*\cM' \simlgr \cM'$,
$c^\vee_u: u^*\cM'^{\vee} \simlgr \cM'$
induce an isomorphism $t_u: h_{X,\ast}(u^*\Psi') \simlgr h_{X,\ast}\Psi'$.

\smallskip

(iii) For each $S$--scheme $X$ such that  there exists an $X$--isomorphism
${u: X^{ \sqcup d}} \simlgr X \times_S S'$,
and  for each  twisted root $X'$--datum  $\Psi'=(\cM', {\cM'}^\vee, \cR', {\cR'}^\vee)$,
then $h_*(u^*\Psi')$ is  the product construction (see Remark \ref{rem_product}).
\end{sproposition}

\begin{proof}
The question is local 
%(this is not precise, as we want to define the Weil restriction on $S$, not locally on $S$. Of course we will define it locally first and descend it later.) 
so that we can assume 
$S$ is affine and that $S'$ is finite \'etale of 
rank $d$ over $S$.  
\smallskip

\noindent{\it First case: $S'=S^{\sqcup d}$ }.
Let $T'$ be the $S'$--torus whose Cartier dual is $\cM'_{S'}$.
We have $T'=T_1 \sqcup T_2 \dots \sqcup T_d$ where each $T_i$ is an $S$-torus 
for $i=1,..,d$. We have  
$\Psi_1 \sqcup \dots \sqcup \Psi_d = \Psi'$ 
where each $\Psi_i$ is a twisted root datum for the $S$--torus
$T_i$. We define $h_*\Psi'=\Psi_1 \times \dots \times \Psi_d$ ( Remark \ref{rem_product}); 
is a twisted root datum with respect to  the $S$--torus
$T=T_1 \times_S T_2 \dots \times_S  T_d$.
It satisfies the first  requirement.
For the second requirement, 
an $X$--isomorphism $u: X^{\sqcup d} \simlgr X^{\sqcup d} $
is given by an element $\sigma \in S_d(X)$.
Up to localize on $X$ we can assume that $\sigma$ is constant so that the 
map  $u$ applies the $i$-factor $X$ on the $\sigma^{-1}(i)$--factor.
We have $u^*(T_X)=T_{X,\sigma(1)} \times_X T_{X,\sigma(2)} \dots \times_X  T_{X,\sigma(d)}$
so that $h_*(u^*\Psi')= \Psi_{\sigma(1)} \times \dots \times \Psi_{\sigma(d)}$.
The map $t_u: \Psi_{\sigma(1)} \times \dots \times \Psi_{\sigma(d)} \to
\Psi_1 \times \dots \times \Psi_d$ is the right map.
Property (ii) is established
and property (iii) follows of  (ii)
by using  that (iii) holds for the case of $S'$ itself. 
 The existence is then proven and unicity follows of property (iii).
 The fact that this construction is functorial is left to the reader.

\smallskip

\noindent{\it General  case.}
There exists a finite  \'etale cover $V \to S$ such that 
there exists a trivialization $u: V^{ \sqcup d} 
\simlgr S' \times_S V $. 
We put $V'=  S'\times_S V$.
The unicity for  the construction of $h_*\Psi$  follows then from the first 
case and of property (ii).

The first case defines the  twisted root datum
$\Psi^\sharp=h_{V,*}(\Psi'_{V'})$ over $V$.
If $h_*\Psi'$ exists, property (ii) provides
an isomorphism $t_u:  \Psi^\sharp \simlgr (h_*\Psi')_V$.
The unicity statement applied to $V\times_S V$
provides a descent data for $(h_*\Psi')_V$
for the \'etale cover $V \to S$.
Since twisted root data satisfy fpqc descent (Lemma \ref{lem_descent}),
$(h_*\Psi')_V$ descends to a twisted root data $\Psi$ on $S$ as desired.
By descent $\Psi$ satisfies all requirements.
\end{proof}

The twisted root data provided by Proposition \ref{prop_weil}
is called the Weil restriction of $\Psi'$ and is
denoted by $\gR_{S'/S}(\Psi')=h_*( \Psi')$. In the case where $S'= S^{\sqcup d}$, we see that the construction of Weil restriction of root data commutes with base change.
By descent argument, we see that the Weil restriction of root data commutes with base change in general.

For a twisted root datum $\Psi$ over $S$, denote the sheaf of outer automorphisms of $\Psi$ by $\uAutext(\Psi)$.
It is the quotient sheaf of $\uAut(\Psi)$ by the Weyl group $W(\Psi)$.
On the other hand, let $\uDyn(\Psi)$ be the $S$-Dynkin diagram
of $\Psi$ and $\uDynn(\Psi)$ its $S$-scheme of connected components
\cite[\S XXIV.5.2]{SGA3}.
The Weil restriction is functorial in $S$ and
satisfies the following expected properties (and more as transitivity
for a cover $S''/S'/S$). 
For an $S'$-scheme $X'$, we
remind the notation $_S\!X'$ for the $S$-scheme $X' \to X \to S$;
this functor is called sometimes Grothendieck's restriction and
is left adjoint to the base change functor \cite[\S I.1, 6.4 and 6.5]{DG}.

\begin{slemma} \label{lem_weyl} 
(1) Assume that   $S'=S'_1 \sqcup S'_2$ where $S'_1$ (resp.\  $S'_2$)
is a finite \'etale cover of $S$ with constant rank $d_1$ (resp.\ $d_2$),
and write  $\Psi'_i= \Psi'_{\mid S'_i}$ for $i=1,2$.
Then we have  decompositions
$$
\gR_{S'/S}(\Psi')= \gR_{S'_1/S}(\Psi'_1) \times \gR_{S'_2/S}(\Psi'_2),$$
$$ \uDyn\bigl( \gR_{S'/S}(\Psi')\bigr)= \uDyn\bigl(\gR_{S'_1/S}(\Psi'_1)\bigr)
\sqcup  \uDyn\bigl(\gR_{S'_2/S}(\Psi'_2)\bigr)$$ and $$
\uDynn\bigl( \gR_{S'/S}(\Psi')\bigr)= \uDynn\bigl(\gR_{S'_1/S}(\Psi'_1)\bigr) 
\sqcup  \uDynn\bigl(\gR_{S'_2/S}(\Psi'_2)\bigr). $$

 \smallskip
 
\noindent (2)
We have an  isomorphism between Weyl groups
$ W(h_*( \Psi'))  \simlgr \gR_{S'/S}( W(\Psi'))$.

 \smallskip
 
\noindent (3)  We have  isomorphisms
$ {_S\bigl(\uDyn(\Psi')\bigr)}  \, \simlgr \,   \uDyn(h_*( \Psi'))$
and \break
$ {_S\bigl( \uDynn(\Psi') \bigr)}  \, \simlgr \, \uDynn(h_*( \Psi'))$.

\end{slemma}

\smallskip

\begin{proof} We put $\Psi=\gR_{S'/S}(\Psi')$. 
\smallskip

\noindent (1) It is clear from the construction of the Weil restriction.

\smallskip

\noindent (2) 
%(\textcolor{orange}{Can you check the notation here? I am quite confused...$\Psi'$ is a root datum over S' and S"=S' so ?})
Putting $S''=S'$ (and $\Psi''=\Psi' \times_{S'}S''$)
and  we decompose $S' \times_S S'' = S'' \sqcup S^+$ 
(where the first summand is the diagonal). The statement  (1) provides the following decomposition
of twisted $S''$--root data
$$
\Psi_{S''}= \Bigr(\gR_{S'/S}( \Psi')\Bigl)_{S''}
= \gR_{S'' \sqcup S^+ /S''}( \Psi'_{S'\times_S S''})
= \Psi'' \times \Psi^+ .
$$
It follows that $W(\Psi)_{S''}= W(\Psi'') \times_{S''} W(\Psi^+)$
and we consider the first projection $W(\Psi)_{S''} \to W(\Psi'')$.
The adjunction property
$$
\Hom_S\bigl(   W(\Psi),  \gR_{S'/S}(W(\Psi') \bigr)  \simlgr  
\Hom_{S'}\bigl(   W(\Psi)_{S'},  W(\Psi') \bigr) 
$$  
defines  a natural $S$--homomorphism of group schemes
$W(\Psi) \to \gR_{S'/S}( W(\Psi'))$.
By descent to the split case, this map is an $S$-isomorphism.

\smallskip

\noindent (3) 
The decomposition  $\uDyn(\Psi)_{S'}=\uDyn(\Psi') \sqcup \uDyn(\Psi^+)$
 provides an $S'$--map \break $f': \uDyn(\Psi') \to \uDyn(\Psi)_{S'}$. 
The adjunction property 
$$
\Hom_S\Bigl(   {_S\bigl(\uDyn(\Psi')\bigr)},  \uDyn(\Psi) \Bigr)  \simlgr  
\Hom_{S'}\bigl(  \uDyn(\Psi'),  \uDyn(\Psi)_{S'} \bigr) 
$$  
attaches to $f'$ an $S$--map $f: {_S\bigl(\uDyn(\Psi')\bigr)} \to   \uDyn(\Psi)$.
Once again, by descent to the split case, this map is an $S$-isomorphism.
The case of $\uDynn$ is similar.

%The proof is similar with (2)
%since  the formation of $\uDyn$ (resp.\ $\uDynn$) is an additive.
 %\textcolor{orange}{??You mean $\uDyn(\Psi_1\times \Psi_2)=\uDyn(\Psi_1)\sqcup\uDyn(\Psi_2)?$ (by counting vertices (roots) of the Dynkin diagram it should be disjoint union not product.) This is a little different from $W(\Psi_1\times\Psi_2)=W(\Psi_1)\times W(\Psi_2).$}
\end{proof}

\subsection{Case of constant type}
Let $\Psi_0$ be a root data.
 %assumed  simply connected (resp.\ adjoint).
We focus on the case when the twisted root datum $\Psi'$ over $S$ is of  constant type $\Psi_0$.
We use the method of \cite[\S 4]{GNR} by considering
for any $S$-scheme $T$ the following groupoid $\cC(T)$. 
The  objects of  $\cC(T)$ are the 
 pairs $(T', \Upsilon')$
where $T'$ is finite \'etale cover of degree $d$ of $T$
and $\Upsilon'$ is a root data over $T'$
of constant type $\Psi_0$;
A morphism $(T'_1, \Upsilon'_1) \to (T'_2, \Upsilon_2')$
consists in  a pair $(u,v)$ where $u: T'_1 \simlgr T'_2$
and $v: \Upsilon'_1 \simlgr u^*\Upsilon'_2$.
Since all objects of $\cC(T)$ are locally isomorphic
for the \'etale topology to the split objects 
$(T^{\sqcup d}, \Psi_{0,T^{\sqcup d}})$, $\cC$ is an $S$--stack.

\begin{slemma} \label{lem_Theta} 
Set $\Theta_0=(S^{\sqcup d}, \Psi_{0,S^{\sqcup d}})$.

\noindent (1) $\uAut_S(\Psi_{0,S})^d \rtimes S_d \simlgr \uAut(\Theta_0)$
and we have a monomorphism $$\uAut(\Theta_0) \to \uAut_S(\Psi_{0,S} \times \dots \times \Psi_{0,S}).$$

\smallskip

\noindent (2) Assume furthermore than $\Psi_0$ is irreducible.
Then the map \break $\uAut(\Theta_0) \to \uAut_S(\Psi_{0,S} \times \dots \times \Psi_{0,S})$ is an isomorphism and we  have   isomorphisms
\begin{equation}\label{seq_31}
\uAut_S(\Psi_{0,S})^d  \rtimes S_d \simlgr 
\uAut_S( \Psi_{0,S} \times \dots \times \Psi_{0,S})
\end{equation}
and
\begin{equation}\label{seq_32}
\uAutext_S(\Psi_{0,S})^d  \rtimes S_d \simlgr 
\uAutext_S( \Psi_{0,S} \times \dots \times \Psi_{0,S}).
\end{equation}

\smallskip

\noindent (3) Assume that $\Psi_0$ is irreducible and let
$D_0$ be its Dynkin diagram. Then $\rDyn(\Psi_0^d)= D_0^{\sqcup d}$
and its set of connected components  $\rDyn_{\triangledown}(\Psi_0^d)$
is $\{1,\dots, d\}$. Furthermore we have a commutative diagram
\begin{equation}\label{seq33}
\xymatrix{\uAutext_S(\Psi_{0,S})^d  \rtimes S_d \ar[r]^\sim \ar[d] &
\uAutext_S( \Psi_{0,S} \times \dots \times \Psi_{0,S}) \ar[d]  \\
S_d   \ar[r]^\sim &   \uAut_\Dyn(\uDyn_\triangledown(\Psi^d_{0,S}))  .
}
\end{equation}
where the right vertical map  is that of \ref{seq_32}.

\end{slemma}

\begin{proof} 
We write $\Psi_0=(M_0, R_0, M_0^\vee, R_0^\vee)$.

\smallskip

\noindent (1) 
Since the symmetric group $S_d= \uAut_S(S^{\sqcup d})$, we have 
an $S$--homomorphism $\nu: \uAut(\Theta_0) \to S_d$
which is split by the permutation action. 
For each $S$-scheme $X$, we have
$\ker(\nu)(X)=\uAut_{S^{\sqcup d}}( \Psi_{0,S^{\sqcup d}})(X)=
\Aut_{X^{\sqcup d}}(\Psi_{0,X^{\sqcup d}})=\uAut_S(\Psi_{0,S})(X)^d$.
Thus $\uAut(\Theta_0)= \uAut_S(\Psi_{0,S}) ^d\rtimes S_d$
as desired. It provides a monomorphism
$ \uAut(\Theta_0) \to \uAut_S(\Psi_{0,S} \times \dots \times \Psi_{0,S})$.

\smallskip

\noindent (2)  
By definition an $S$-isomorphism $u: \Psi_{0,S} \times \dots \times \Psi_{0,S} \simlgr \Psi_{0,S} \times \dots \times \Psi_{0,S}$
is an isomorphism $u: M_{0,S}^d \simlgr M_{0,S}^d$
preserving roots and coroots.
There exists a partition $S= \sqcup_i U_i$ in clopen subsets
such that $u_{S_i}$ is constant over each $U_i$, that is
given for some $u_i : M_0 ^d\simlgr M_0^d$.
Since $\Psi_0$ is irreducible, it defines
a permutation $\sigma(u_i) \in S_d(U_i)$
whence an element $\sigma(u) \in S_d(S)$.
This assignment  defines an $S$-group homomorphism
${\gamma_0: \uAut_S(\Psi_{0,S} \times \dots \times \Psi_{0,S}) \to S_d}$ which is split by considering the 
permutation isomorphisms of $\Psi_0 \times \dots \times \Psi_0$. Next we have $\ker(\gamma)= 
\prod\limits_{i=1}^d \uAut_S(\Psi_{0,S})$, whence the first 
exact sequence and the isomorphism
$\uAut(\Theta_0) \simlgr \uAut_S(\Psi_{0,S} \times \dots \times \Psi_{0,S})$.

 The second sequence is obtained
by moding out by the Weyl group
$W(\Psi_{0,S})^d = W(\Psi_{0,S}^d)$.

\smallskip

\noindent (3) This is straightforward.

\end{proof}

 We come back now to the pair $\Theta=(S',\Psi')$ which is then  the twist of  $\Theta_0$ by the 
$\uAut(\Theta_0)$-torsor $E= \uIsom(\Theta_0,\Theta)$
which
locally trivial for the \'etale topology on $S$.
We consider the sheaf  $\uAut_S(\Psi_{0,S}^d)$-torsor
 $F= E \wedge^{\Aut(\Theta_0)} \uAut_S(\Psi_{0,S}^d)
= \uIsom\Bigl( \Psi_{0,S}^d, \, {}^E(\Psi_{0,S}^d))$. 
Twisting the above facts by $E$ leads to the next statement.

\begin{slemma}\label{lem_automorphism_Weil}
%Let $\Psi'$ be a root datum over $S'$ with constant
%irreducible type  
%\textcolor{blue}{$\Psi_0$}.
% at each geometric fiber of $S'$.
(1) We have an isomorphism
 $\gR_{S'/S}(\Psi') \simlgr 
\, ^F\! (\Psi_0 \times \dots \times \Psi_0)$ and
a monomorphism $\uAut(\Theta) \to  \uAut(\gR_{S'/S}(\Psi'))$.

\smallskip

\noindent (2) We have the following exact sequence of $S$--groups (in big \'etale site):
\begin{equation}\label{seq_1}
\begin{tikzcd}
1\to \gR_{S'/S}(\uAut(\Psi')) \to \uAut(\Theta) \xrightarrow{\gamma} \uAut_S(S') \to 1.
\end{tikzcd}
\end{equation}

\smallskip

\noindent (3) Assume that $\Psi_0$ is irreducible.
Then we  have an isomorphism 
 $\uAut(\Theta) \simlgr  \uAut(\gR_{S'/S}(\Psi'))$ and an exact sequence of $S$-groups (for the big \'etale site)

\begin{equation}\label{seq_2}
\begin{tikzcd}
1\to \gR_{S'/S}(\uAut(\Psi')) \to \uAut(\gR_{S'/S}(\Psi')) \xrightarrow{\gamma} \uAut_S(S') \to 1
\end{tikzcd}
\end{equation}

and

\begin{equation}\label{seq_3}
\begin{tikzcd}
1\to \gR_{S'/S}(\uAutext(\Psi')) \to \uAutext(\gR_{S'/S}(\Psi')) \to \uAut_S(S') \to 1.
\end{tikzcd}
\end{equation}

\end{slemma}

\begin{proof} We put $\Psi=\gR_{S'/S}(\Psi')$.
\smallskip

\noindent (1) In view of Proposition \ref{prop_weil},
$\Psi$ is an $S$--form of $
\gR_{S^{\sqcup d}/S}( \Psi_{0,S^{\sqcup d}})
= \Psi_{0,S}^{d}$.
Since $\Aut(\Theta_0)$ acts on $\gR_{S^{\sqcup d}/S}( \Psi_{0,S^{\sqcup d}})$,  the yoga of forms
yields that $\Psi \simlgr \, ^E\!\bigl(\gR_{S^{\sqcup d}/S}( \Psi_{0,S^{\sqcup d}})\bigr)$. In other words,
$\Psi$ is isomorphic to $^F\!(\Psi_{0,S}^d)$. 
Twisting the monomorphism $\uAut(\Theta_0)\to \uAut_S(\Psi_{0,S}^d)$ yields a monomorphism 
 $\uAut(\Theta) \to  \uAut(\Psi)$.

\smallskip

\noindent (2) 
Twisting  by $E$ the exact sequence
$$
1 \to \uAut_S(\Psi_{0,S})^d  \to \uAut(\Theta_0) \to S_d \to 1
$$
arising from Lemma \ref{lem_Theta}.(1)
gives rise to an exact sequence
\begin{equation} \label{seq43}
1 \to   {^E\!\bigl(}\uAut_S(\Psi_{0,S})^d\bigr)
\to \uAut(\Theta) \xrightarrow{\nu} \uAut_S(S') \to 1.
\end{equation}
It remains to identify  ${^E\!\bigl(}\uAut_S(\Psi_{0,S})^d\bigr)$
with $\gR_{S'/S}(\uAut(\Psi'))$. 
We have a  natural monomorphism $\gR_{S'/S}(\uAut(\Psi')) \to
\ker(\nu)$ which becomes after a suitable \'etale  localization on $S$ the morphism $\uAut(\Psi_0)^d \to \uAut(\Theta_0)$. It follows that the monomorphism
$\gR_{S'/S}(\uAut(\Psi')) \to \uAut(\Theta)$ 
provides an identification $\gR_{S'/S}(\uAut(\Psi'))
\simlgr {^E\!\bigl(\uAut_S(\Psi_{0,S})^d\bigr)}$.

\smallskip

\noindent (3) We assume that $\Psi_0$ is irreducible. Lemma \ref{lem_Theta}.(3)
 provides  an isomorphism
$\uAut(\Theta_0) \simlgr  \uAut_S(\Psi_{0,S} \times \dots \times \Psi_{0,S})$ whence an isomorphism $\uAut(\Theta) \simlgr  \uAut(\Psi)$ by (1). The sequence \eqref{seq_1} gives then
the sequence \eqref{seq_2}.  
By moding out by the Weyl $S$--group
$R_{S'/S}(W(\Psi')) \simlgr W(\Psi)$ (Lemma \ref{lem_weyl}),
we obtain the sequence \eqref{seq_3}.

\end{proof}

\begin{sremark} \label{rem_automorphism_Weil}
{\rm 
 Note that the map $h_\ast:\gR_{S'/S}(\uAut(\Psi'))\to
 \uAut(\Theta) \to  \uAut_S(\gR_{S'/S}(\Psi'))$ defined above is 
a monomorphism without the assumption on 
irreducibility on $\Psi_0$.
However we need the assumption that $\Psi_0$ is irreducible 
to conclude that $\uAut(\Theta) \simeq\uAut_S(\gR_{S'/S}(\Psi'))$ and hence to define a  morphism from $ \uAut(\gR_{S'/S}(\Psi'))$
  to $\uAut_S(S')$.
}
%\item Suppose that the root datum $\Psi'$ is irreducible and adjoint  at each geometric fiber. Then Lemma \ref{lem_automorphism_Weil} still  holds for $\Psi'$.
\end{sremark}
\smallskip

We come now to the isotypic decomposition
which is analogous to that for simply connected (resp.\ adjoint) semisimple $S$--group scheme \cite[\S XXIV.5]{SGA3}

\begin{sproposition} \label{prop_isotypic}
Assume that the root datum $\Psi_0$ is semisimple
simply connected (resp.\ adjoint).
Let $\Psi_0= \prod_{\bt} \Psi^\sharp_{0,t}$
be the isotypic decomposition \cite[XXI.7.1.6]{SGA3}
and choose an isomorphism $\Psi^\sharp_{0,\bt} \simlgr \Psi_{0,\bt}^{d_{\bt}}$  where $\Psi_{0,\bt}$ is irreducible and $d_{\bt}$ is a positive integer. 
Let $\Psi$ be a twisted root datum of constant type
$\Psi_0$; let $\gD=\uDyn(\Psi)$  be the 
$S$--Dynkin diagram (which is a finite $S$-\'etale cover of constant degree) of $\Psi$
and consider its isotypic decomposition $\gD= \amalg_\bt \, \gD_\bt$ \cite[XXIV.5.2]{SGA3}. 

\smallskip

\noindent (1)
We have a canonical isotypic decomposition
 $$
\Psi= \prod_{\bt} \Psi_{\bt}
$$ 
where each $\Psi_{\bt}$ is of type $\Psi^\sharp_{0,\bt}$.
Furthermore it induces identifications $\uDyn(\Psi_{\bt})= \gD_\bt$ 
for each $\bt$.

\smallskip

\noindent (2)  For each $\bt$, let $\gD_{\bt, \triangledown}$ be the $S$-scheme 
of connected components of $\gD_\bt$ (denoted by $(\gD_\bt)_0$
in \cite[XXIV.5.2]{SGA3}).
Then there exists an isomorphism 
 $\Psi_{\bt} \simlgr  \gR_{\gD_{\bt, \triangledown}/S}  (\Psi'_{\bt})$
 where  $\Psi'_{\bt}$ is a twisted root data over $\gD_{\bt, \triangledown}$  of 
 constant type $\Psi_{0,\bt}$. Furthermore  $\Psi'_{\bt}$
is unique up to isomorphism.

\end{sproposition}

\begin{proof}
(1) In view of   \cite[\S XXIV.5.4]{SGA3},
we have a decomposition $\uAut(\Psi_{0,S})
= \prod_{\bt} \uAut(\Psi^\sharp_{0,\bt, S})$.
Since $\Psi$ is an $S$-form of $\Psi_{0,S}$,
the yoga of forms provides a canonical decomposition 
$\Psi= \prod_{\bt} \Psi_{\bt}$ where 
each $\Psi_{\bt}$ is an $S$-form of $\Psi^\sharp_{0,\bt, S}$.

Let $D_0$ be the Dynkin diagram of $\Psi_0$,
it decomposes as $D_0= \amalg_{\bt} \, D^\sharp_{0,\bt}$.
Since we have a commutative diagram 
\[
\xymatrix{
\uAutext_S(\Psi_{0,S})  
\ar[r]^{\sim} \ar[d]^{\wr}   & \prod_{\bt} \uAutext(\Psi^\sharp_{0,\bt, S}) 
\ar[d]^{\wr} \\
\uAut_{\Dyn}(D_{0,S})  
\ar[r]^{\sim}  & \prod_{\bt} \uAut_{\Dyn}(D^\sharp_{0,\bt, S}),
}
\]
we obtain that $\uDyn(\Psi)= \amalg_{\bt} \, \uDyn(\Psi_{\bt})$ and
$\gD_\bt= \uDyn(\Psi_{\bt})$.

\smallskip

\noindent (2) We fix a type $\bt$.
We put $\Theta_{0,\bt}= 
\bigl( S^{\sqcup d_\bt}, \Psi_{0, \bt,S^{\sqcup d_\bt }}\bigr)$ and we have
$\uAut\bigl( \Theta_{0,\bt}\bigr) \simlgr \uAut_S(\Psi_{0,\bt,S}^{d_\bt})$ in view of Lemma \ref{lem_Theta}.(2). Since $\Psi_\bt$ is an $S$-form
of $\Psi_{0,\bt,S}^{d_\bt}$, it defines an $S$-form
$\Theta_\bt$ of $\Theta_{0,\bt}$, that is, a 
pair $(S'_\bt, \Psi'_\bt)$
where $S'_\bt \to S$ is a finite \'etale cover of 
$S$ of degree $d_\bt$ and $\Psi'_\bt$ is
 a twisted $S$--root data of constant type
$\Psi_{0,\bt}$. The point is that $\gR_{S'_\bt/S}(\Psi'_\bt)$ is isomorphic to $\Psi_\bt$  in view of
Lemma \ref{lem_automorphism_Weil}.(1).
The unicity of $(S'_\bt, \Psi'_\bt)$ follows of
the fact that there is an equivalence of categories
between $S$-forms of $\Theta_{0,\bt}$
and $S$-forms of $\Psi_{0,\bt,S}^{d_\bt}$
in view of the isomorphism $\uAut\bigl( \Theta_{0,\bt}\bigr) \simlgr 
\uAut(\Psi_{0,\bt,S}^{d_\bt})$. 
It remains to show that $S'_\bt$ is $S$--isomorphic to $\gD_{\bt, \triangledown}$.
Using the compatibility \eqref{seq33}, we observe that  the 
 isomorphism $\Psi^\sharp_{0,\bt,S} \simlgr \Psi^{d_\bt}_{0, \bt,S}$
induces a commutative diagram
\[
\xymatrix{
\uAut_S(\Psi^\sharp_{0,\bt,S})  
\ar[r]^{\sim} \ar[d]   & \uAut_S(\Psi^{d_\bt}_{0,\bt, S}) 
\ar[d] \\
\uAut_\Dyn(D^\sharp_{0,\bt,\triangledown,S} ) \ar[r]^{\sim}  & S_d
}
\]
where $D^\sharp_{0,\bt,\triangledown}$ stands for the set of connected components 
of the Dynkin diagram $D^\sharp_{0,\bt}$.
By twisting everything by the $\uAut(\Psi^\sharp_{0,S} )$--torsor
$\uIsom\bigl(\Psi^\sharp_{0,\bt,S}, \Psi_{\bt} \bigr)$, 
we obtain that $\gD_{\bt, \triangledown}$ is $S$--isomorphic to $S'_{\bt}$.
\end{proof}

\smallskip

\subsection{Admissibility and Weil restriction}

\begin{sproposition}\label{prop_admissible_Weil} 
Let $\Psi_0$ be a root datum.
Let $\Psi'$ be a  twisted root datum defined over $S'$
of constant type $\Psi_0$ and let $G'$ be a reductive
$S'$-group scheme of constant type $\Psi_0$.

\smallskip
\noindent 
\begin{enumerate}
\item The reductive $S$--group scheme $\gR_{S'/S}(G')$
has constant type $\Psi_0^d$ and so has  $\gR_{S'/S}(\Psi')$.
 
 \smallskip
 
\item There is a canonical  monomorphism
$$
\gh:\gR_{S'/S}\Bigl(\uIsomext(G',\Psi')\Bigr) \to 
\uIsomext\Bigl(\gR_{S'/S}(G'),\gR_{S'/S}(\Psi') \Bigr).
$$
\end{enumerate}
\end{sproposition}

\begin{proof}
Set $\Psi=\gR_{S'/S}(\Psi')$ and $G=\gR_{S'/S}(G')$.

\noindent 

(1) Let $\ol{s}$ be a geometric point of $S$.
As $h:S'\to S$ is a finite \'etale cover, $S'\times_S \ol{s}\simeq \ol{s}_1\sqcup\dots\sqcup\ol{s}_d$ with $\ol{s}_i\simeq \ol{s}.$
Set $\ol{s}'=S'\times_S \ol{s}$.
Then $G'_{\ol{s}'}\simeq G_1\sqcup\cdots\sqcup G_d,$ where each $G_i$ is a reductive group over $\ol{s}_i$ of type
$\Psi_0$. It follows that $G'_{\ol{s}'}$ has
type $\Psi_0^d$.
Similarly $\Psi'_{\ol{s}'}\simeq \Psi_0\sqcup\dots\sqcup\Psi_0$.

\noindent (2) 
Recall that for an $S$-scheme $X$, we denote $S'\times_S X$ by $X'$ and let $h_X:X'\to X$ be the base change of $h:S'\to S$. 

Let $v\in\gR_{S'/S}(\uIsomext(G',\Psi'))(X)=\uIsomext(G',\Psi')(X')$. 
We aim to define an orientation $u\in\uIsomext(G,\Psi)(X)$ from $v$ in a canonical way.

Since $h_X: X'\to X$ is a finite \'etale cover, we can choose an \'etale cover $\{U_i\to X\}$ satisfying the following properties \label{split cover}:
\begin{enumerate}
\item[(C1)] The cover splits $X'$, i.e. $U_i\times_X X'\simeq \underset{l}{\amalg}\  V_{i,l}$, where $V_{i,l}\simeq U_i$ as $X$-schemes for each $V_{i,l}$.
\item[(C2')] The group $G'_{U'_i}$ has a maximal torus, where $U'_i=U_i\times_X X'$.
\end{enumerate}

Choose a maximal torus ${T^{\diamond}_i}^\prime$  of $G'_{U'_i}$ for each $i$ and set $\Psi^{\diamond\prime}_i=\Phi(G'_{U'_i},{T^{\diamond}_i}^\prime)$. 
Let $T^\diamond_i=\gR_{U'_i/U_i}(T^{\diamond\prime}_i)$ and $\Psi^\diamond_i=\gR_{U'_i/U_i}( \Psi^{\diamond\prime}_i)$. Then by the construction of $\Psi^\diamond_i$ we have $\Psi^\diamond_i=\Phi(G_{U_i},T^\diamond_i)$ as twisted root data over $U_i$.
The morphism $h_{U_i,\ast}: \gR_{U'_i/U_i}(\uIsom(\Psi^{\diamond\prime}_i,\Psi'_{U'_i}))\to \uIsom(\Psi^\diamond_{i},\Psi_{U_i})$ gives an isomorphism 
$$\uAut(\Psi_{U_i})\land^{\gR_{U'_i/U_i}(\uAut(\Psi'_{U'_i}))}\gR_{U'_i/U_i}(\uIsom(\Psi^{\diamond\prime}_i,\Psi'_{U'_i}))\to\uIsom(\Psi^\diamond_i,\Psi_{U_i}),$$ 
which in turn induces an isomorphism by Lemma \ref{lem_automorphism_Weil}.
$$\imath:\uAutext(\Psi_{U_i})\land^{\gR_{U'_i/U_i}(\uAutext(\Psi'_{U'_i}))}\gR_{U'_i/U_i}(\uIsomext(\Psi^{\diamond\prime}_i,\Psi'_{U'_i}))\to\uIsomext(\Psi^\diamond_i,\Psi_{U_i}).$$
As $\gR_{U'_i/U_i}(\uAutext(\Psi'_{U'_i}))$ is a subgroup of  $\uAutext(\Psi_{U_i})$ by Lemma \ref{lem_automorphism_Weil}, we have a canonical monomorphism from 
$\gR_{U'_i/U_i}(\uIsomext(\Psi^{\diamond\prime}_i,\Psi'_{U'_i}))$ to 
$\uAutext(\Psi_{U_i})\land^{\gR_{U'_i/U_i}(\uAutext(\Psi'_{U'_i}))}\gR_{U'_i/U_i}(\uIsomext(\Psi^{\diamond\prime}_i,\Psi'_{U'_i})$.
Compose this map with $\imath$, we get a monomorphism
 \[\gh_i:\gR_{U'_i/U_i}(\uIsomext(\Psi^{\diamond\prime}_i,\Psi'_{U'_i}))\to \uIsomext(\Psi^\diamond_i,\Psi_{U_i}).\]
%which we still denote by $h_{U_i,\ast}$. 

\noindent If we choose another maximal torus $T^{\sharp\prime}_i$ of $G'_{U'_i}$ and let $\Psi^{\sharp\prime}_i=\Phi(G'_{U'_i},T^{\sharp\prime}_i)$, then there is a canonical isomorphism 
from $\uIsomext(\Psi^{\diamond\prime}_i,\Psi'_{U'_i})\land^{N_{G'}(T^{\diamond\prime}_i)^{op}}\uTransp(T^{\diamond\prime}_{i},T^{\sharp\prime}_{i})$ to 
$\uIsomext(\Psi^{\sharp\prime}_i,\Psi_{U'_i})$. This allows us to identify $\uIsomext(\Psi^{\diamond\prime}_i,\Psi'_{U'_i})$ with $\uIsomext(\Psi^{\sharp\prime}_i,\Psi_{U'_i})$
in a canonical way and to define $\uIsomext(G',\Psi')$ (see \S 3.2). 

As $\gR_{U'_i/U_i}(\uTransp(T^{\diamond\prime}_{i},T^{\sharp\prime}_{i}))=\uTransp(T^\diamond_i,T^\sharp_i)$,  from $\gh_i$ we get
the monomorphism $\gh:\gR_{S'/S}(\uIsomext(G',\Psi'))\to \uIsomext(G,\Psi)$ by descent.

\end{proof}

We get from the first statement
the following corollary immediately.

\begin{scorollary}\label{coro_admissible_Weil}
Keep the notation as in Proposition \ref{prop_admissible_Weil}.
If $\Psi'$ is admissible for $G'$, then $\gR_{S'/S}(\Psi')$ is admissible for $\gR_{S'/S}(G')$.
\end{scorollary}
\smallskip

\subsection{Isotypic decomposition of embedding functors}

We relate now orientated embedding functors with isotypic decomposition
of semisimple simply connected (resp.\ adjoint)
$S$--group schemes \cite[\S XXIV.5]{SGA3}.
We assume that the root datum $\Psi_0$ is semisimple
simply connected (resp.\ adjoint). Let $\Psi$ be a twisted root datum of constant type
$\Psi_0$ and consider the  isotypic decomposition 
\begin{equation} \label{dec0}
\Psi = \prod_{\bt} \Psi_\bt \simlgr \prod_{\bt} \gR_{\uDynn(\Psi_\bt)/S}(\Psi'_\bt)
\end{equation}
as in Proposition  \ref{prop_isotypic}. In particular, if  
 $T$ is $S$--torus defined by $\Psi$, we have 
 a decomposition $T \simlgr \prod_{\bt} T_\bt$
 where each $T_\bt$ is the $S$-torus defined by $\Psi_\bt$.
 Let $T'_\bt$ be the $\uDynn(\Psi_\bt)$-torus defined by $\Psi'_\bt$.

Let $G$ be a semisimple simply connected
(resp.\ adjoint) $S$--group scheme
of constant type $\Psi_0$. 
We consider its  isotypic decomposition \cite[XXIV.5.10]{SGA3}
\begin{equation} \label{dec1}
G\simlgr \prod_{\bt} G_\bt = \prod_{\bt} \gR_{\uDynn(G)_\bt/S}(G'_\bt)  
\end{equation}
where $\uDynn(G)_\bt$ stands for the $S$--sheaf of connected
components of the Dynkin diagram $\uDyn(G)_\bt$. 
We assume that $\Psi$ is admissible for 
$G$ and we are given an orientation  $v \in \uIsomext(G,\Psi)(S)$.
In view of  \cite[XXIV.3.6]{SGA3}, the morphism $\uIsomext(G,\Psi) \to  
\uIsom_{\rDyn}\bigl(\uDyn(G),\uDyn(\Psi)\bigr)$ is an isomorphism and similarly for the
isotypic components. We can then discuss everything in term
of isomorphisms of Dynkin diagrams.
We have then a commutative diagram
\begin{equation} \label{square0}
\xymatrix{
\uIsomext(G,\Psi)     \ar[r]\ar[d]^{\wr}   & \prod_\bt \uIsomext(G_\bt,\Psi_\bt)
   \ar[d]^{\wr} \\  
\uIsom_{\rDyn}(\uDyn(G),\uDyn(\Psi)) 
\ar[r] ^{\sim}    & \prod_\bt \uIsom_{\rDyn}(\uDyn(G_\bt),\uDyn(\Psi_\bt)) 
}
\end{equation}
so that the top horizontal map is an $S$-isomorphism.
It follows that $v$ defines  orientations $v_\bt \in \uIsomext(G_\bt,\Psi_\bt)(S)$.
We claim that we  have a natural $S$--isomorphism 
\begin{equation} \label{eq62}
 \prod_{\bt} \gE(G_\bt,\Psi_\bt, v_\bt) \simlgr \gE(G,\Psi,v)
\end{equation}
defined  by the following assignment.
For an $S$-scheme $X$ and  $X$-tori embeddings $f_\bt: T_\bt \hookrightarrow G_{\bt, X}$
such that $f_\bt$ induces an isomorphism  $\Psi_{\bt,X} \simlgr \Phi(G_{\bt, X}, f_\bt(T_\bt))$ 
 which  induces  the orientation $v_\bt$, we associate the product $\prod_\bt f_\bt: T_X =
\prod_\bt T_{\bt, X}  \xhookrightarrow{\prod_\bt f_\bt}   G_{X}$.
This $S$-map is an isomorphism and is equivariant with respect to 
the decomposition  $G=\prod_\bt G_\bt$.

We focus now on a single isotypic component  of type $\bt$.
Denoting 
\[v_{\rDyn,\bt} \in  \uIsom_\Dyn(\uDyn(G_\bt),\uDyn(\Psi_\bt))(S)
\simeq\uIsomext(G_\bt,\Psi_\bt)(S)\]
the isomorphism given by the orientation $v_\bt$, we obtain also an  isomorphism $v_{\rDyn,\bt, \triangledown}: \uDynn(G)_\bt \simlgr \uDynn(\Psi)_\bt$ by taking the connected components.
Next in view of \cite[XXIV.5.13]{SGA3}, we have commutative diagrams
\begin{equation} \label{square1}
\xymatrix{
\uDyn(G'_\bt)    \ar@{=}[r]\ar[d]^{\pi}  & \uDyn(G_\bt) \ar[d] \\  
 \uDynn(G_\bt) \ar[r]& S,
}
\end{equation}
and similarly 
\begin{equation} \label{square2}
\xymatrix{
\uDyn(\Psi'_\bt)   \ar@{=}[r]\ar[d]   & \uDyn(\Psi_\bt) \ar[d]\\  
 \uDynn(\Psi_\bt) \ar[r]& S,
}
\end{equation}
The map $v_{\rDyn,\bt}$ connects the two preceding diagrams
and provides then a cartesian square

\begin{equation} \label{square3}
\xymatrix{
\qquad v^\flat_{\rDyn,\bt} : &\uDyn(G'_\bt)   \ar[r]^{\sim} \ar[d]^{\pi}   & \uDyn(\Psi'_\bt)  \ar[d]\\  
 \qquad v_{\rDyn,\bt, \triangledown}: &\uDynn(G_\bt) \ar[r]^{ \sim} & \uDynn(\Psi_\bt) .
}
\end{equation}
In particular $v^\flat_{\rDyn,\bt}$ and $v_{\rDyn,\bt, \triangledown}$
defines an isomorphism 

\begin{equation} \label{square4}
v_{\rDyn,\bt}^\dagger: {_S\bigl(\uDyn(G'_\bt)\bigr)}
\, \simlgr  \,  {_S\bigl(\uDyn(\Psi'_\bt) \bigr)}
\end{equation}
which is nothing but $v_{\rDyn,\bt}$ when taking the identification
 ${_S\bigl(\uDyn(G'_\bt)\bigr)}= \uDyn(G_\bt)$
 (resp.\ ${_S\bigl(\uDyn(\Psi'_\bt) \bigr)}=
\uDyn(\Psi_\bt)$ provided  by Lemma \ref{lem_weyl}.(3)
applied to the finite \'etale cover 
$\uDynn(G_\bt)  \to S$ (resp.\ $\uDynn(\Psi_\bt)  \to S$).

By identifying $\uDynn(G_\bt)$ with $\uDynn(\Psi_\bt)$ via $v_{\rDyn,\bt, \triangledown}$, we can regard $G'_\bt$ and $\uDyn(G'_\bt)$ as schemes over  $\uDynn(\Psi_\bt)$.  
We have the following commutative diagrams:
\begin{equation} \label{square5'}
\xymatrix{
 G'_\bt \quad \ar[r]_{=}^{\id} \ar[d]^{\pi_G}   &  \quad G'_\bt   \ar[d]^{v_{\rDyn,\bt,\triangledown}\circ\pi_G} \\
\uDynn(G_\bt) \quad \ar[r]_\sim^{v_{\rDyn,\bt, \triangledown}}  &  \quad \uDynn(\Psi_\bt) . \\
%&
 }
\end{equation}

and

\begin{equation} \label{square5}
\xymatrix{
 \uDyn(G_\bt)\ar@{=}[r] \ar@/^19pt/[rrrr]^{v_{\rDyn,\bt}} \ar[d] & \uDyn(G'_\bt) \quad  \ar[r]_{=}^{\id} \ar[d]^{\pi}    &  \quad \uDyn(G'_\bt)  \quad \ar[d]^{v_{\rDyn,\bt,\triangledown}\circ\pi} 
\ar[r]_{\sim}^{v^\flat_{\rDyn,\bt}} & \uDyn(\Psi'_\bt)  \ar[d] \ar@{=}[r]\ar[d] & \uDyn(\Psi_\bt)   \ar[d] \\  
S  \ar@{=} @/_21pt/[rrrr]& \uDynn(G_\bt) \quad 
\ar[r]_\sim^{v_{\rDyn,\bt, \triangledown}} \ar[l]& \quad \uDynn(\Psi_\bt) \quad  \ar[r]_{=}^{\id} &  \quad \uDynn(\Psi_\bt) \ar[r] & S. \\
 &&&&}
\end{equation}
The diagram (\ref{square5'}) gives an isomorphism $\imath$ from $G_\bt=\gR_{\uDynn(G_\bt)/S}(G'_\bt)$ to $\gR_{\uDynn(\Psi_\bt)/S}(G'_\bt)$.
This in turns gives an  $S$-isomorphism $\imath_{\rDyn}$ from $\uDyn(G_\bt)$ to 
$\uDyn(\gR_{\uDynn(\Psi_\bt)/S}(G'_\bt))$.
Note that $\uDyn(\gR_{\uDynn(\Psi_\bt)/S}(G'_\bt))={_S\bigl(\uDyn (G'_\bt)\bigr)}$ 
and $\uDyn\bigl((\gR_{\uDynn(G_\bt)/S}(G'_\bt)\bigr)={_S\bigl(\uDyn (G'_\bt)\bigr)}$ by Lemma \ref{lem_weyl}.(3) applied to 
the finite \'etale covers $\uDynn(\Psi_\bt) \to S$ and $\uDynn(G_\bt)\to S$ respectively.
Hence $\imath_{\rDyn}$ is nothing but the identity map on $_S\uDyn(G'_\bt)$.

The isomorphism $v^\flat_{\rDyn,\bt}\in \uIsom_\Dyn(\uDyn(G'_\bt),\uDyn(\Psi'_\bt))(\uDynn(\Psi_\bt))$ gives an orientation $v^\flat_\bt\in\uIsomext_{\uDynn(\Psi_\bt)}(G'_\bt,\Psi'_\bt)(\uDynn(\Psi_\bt))$. Note that  $v^\flat_{\rDyn,\bt}$ is also a section of $\uIsom_\Dyn\bigl(_S\uDyn (G'_\bt),\ _S\uDyn(\Psi'_\bt)\bigr)(S)$.
We claim that there is an $S$-isomorphism

\begin{equation} \label{dec2}
\gR_{\uDynn(\Psi_\bt)/S}\Bigl(  \gE(G'_\bt,\Psi'_\bt, v^\flat_{\bt})  \Bigr) 
\simlgr \gE(G_\bt,\Psi_\bt, v_\bt) 
\end{equation}
defined as follows. 
%Let $T^\diamond_\bt$ be the $\uDynn(G_\bt)$-torus associated to $\Psi^\diamond_\bt$. 
If  $X$ is an $S$-scheme,  we 
have
$$
\gR_{\uDynn(\Psi_\bt)/S}\Bigl(  \gE(G'_\bt,\Psi'_\bt, v^\flat_{\bt})  \Bigr) 
(X)= \gE(G'_\bt,\Psi'_\bt, v^\flat_{\bt})(X \times_S \uDynn(\Psi_\bt));
$$
an element of this set is an $X \times_S \uDynn(\Psi_\bt)$-monomorphism 
$$
f': T'_{\bt, X \times_S \uDynn(\Psi_\bt)} \hookrightarrow 
 G'_{\bt, X \times_S \uDynn(\Psi_\bt)}
 $$
such that $f'$ induces an isomorphism
 $ \Phi\bigr(G'_{\bt, X \times_S \uDynn(\Psi_\bt)},  f'(T'_{\bt, X \times_S \uDynn(\Psi_\bt)}  ) \bigr) \simlgr (\Psi'_\bt)_{ X \times_S \uDynn(\Psi_\bt)}$ 
 which induces the orientation $v^\flat_{\bt}$. Taking the Weil restriction for $\uDynn(\Psi_\bt)/S$
provides a monomorphism
\begin{equation}\label{eq63}
f^{\prime,Weil} : T_{\bt,X}=\bigl(\gR_{\uDynn(\Psi_\bt) /S}  (T'_{\bt} )\bigr)_X 
\quad \hookrightarrow \quad  
 \bigl(\gR_{\uDynn(\Psi_\bt)/S}(G'_\bt)  \bigr)_X
\end{equation}
such that $f^{\prime,Weil}$ induces an isomorphism
$$ \Phi\Bigl(\bigl(\gR_{\uDynn(\Psi_\bt)/S}(G'_\bt)\bigr)_X ,  f^{\prime, Weil}(T_{\bt,X}) \Bigr)  \simlgr \bigl(\gR_{\uDynn(\Psi_\bt)/S}(\Psi'_\bt)\bigr)_X=\Psi_{\bt,X}$$
which induces the orientation $v^\diamond$
  arising from 
\[
\xymatrix{
{v^\flat_{\Dyn,\bt}} \in &\uIsom_\Dyn\Bigl( {_S\bigl(\uDyn(G'_\bt)\bigr)} ,  {_S\bigl(\uDyn(\Psi'_\bt)\bigr)} \Bigl)(S) \ar@{=}[d] \\
&\uIsom_\Dyn\Bigl( \uDyn\bigl( \gR_{\uDynn(\Psi_\bt)/S}(G'_\bt) \bigr) , 
\uDyn(\Psi_{\bt} )\Bigl)(S) \\
v^\diamond_\bt \in &\uIsomext\Bigl( \gR_{\uDynn(\Psi_\bt)/S}(G'_\bt),\Psi_{\bt} \Bigr)(S) 
\ar[u]^\wr
}
\]
where we use in the first equality the identifications   $
{_S\bigl(\uDyn(G'_\bt)\bigr)} \simlgr  \uDyn\bigl( \gR_{\uDynn(\Psi_\bt)/S}(G'_\bt) \bigr)$ and 
$
{_S\bigl(\uDyn(\Psi'_\bt)\bigr)} \simlgr  \uDyn\bigl( \gR_{\uDynn(\Psi_\bt)/S}(\Psi'_\bt) \bigr)$
 from Lemma \ref{lem_weyl}.(3) applied to the finite \'etale cover 
 $\uDynn(\Psi_\bt)\to S$.

%Next the identification $\rho: \gR_{\uDynn(G_\bt)/S}(\Psi_\bt^\diamond) \simlgr 
 %\gR_{\uDynn(\Psi_\bt)/S}(\Psi_\bt)$ of \eqref{square6}  induces  an identification
 %$\gR_{\uDynn(G_\bt) /S} (T^\diamond_{\bt}) \simlgr T_\bt$.
 Composing $f^{',Weil}$ by the isomorphism  $\imath^{-1}:\gR_{\uDynn(\Psi_\bt)/S}(G'_\bt)\simlgr G_\bt$ coming from the diagram \eqref{square5'},  
 we get a monomorphism
\begin{equation}\label{eq64}
f :   T_{\bt,X}  \hookrightarrow  G_{\bt,X}
\end{equation}
such that $f$ induces an isomorphism
$\Phi( G_{\bt,X}, f (T_{\bt,X})) \simlgr  \Psi_{\bt,X}$
which induces the orientation arising from the composition
$$
\uIsom_{\rDyn}\Bigl( \uDyn(G_\bt), \uDyn\bigl(\gR_{\uDynn(\Psi_\bt)/S}( G')\bigr)\Bigr) \times
\uIsom_{\rDyn}\Bigl(\uDyn\bigl(\gR_{\uDynn(\Psi_\bt)/S}( G')\bigr),  \uDyn(\Psi_\bt ) \Bigr) 
\to $$
$$
\uIsom_{\rDyn}\Bigl( \uDyn(G_\bt), \uDyn(\Psi_t) \Bigr) $$
to $\imath_{\rDyn}\circ {(v^\flat_{\rDyn,\bt})}  =
v_{\rDyn,\bt}$ in view of the factorization
\eqref{square5}.
The map \eqref{eq63} is then  well-defined and is equivariant
with respect to the isotypic decomposition \eqref{dec1}.
To check that it is an isomorphism is routine. 
Indeed by descent, we are reduced to the  direct product case which 
is clear. Combining  \eqref{dec2} and  \eqref{eq63} 
provides an isotypic decomposition 

\begin{equation} \label{dec3}
 \gE(G,\Psi,v) = \prod_{\bt} \, \gE(G_\bt,\Psi_\bt, v_\bt) \simlgr 
 \prod_{\bt}  \, \gR_{\uDynn(\Psi_\bt)/S}\Bigl(  \gE(G'_\bt,\Psi'_\bt, v^\flat_\bt)  \Bigr) 
\end{equation} 
 which is equivariant for the isotypic decomposition   \eqref{dec1}.

\begin{sremark}
 {\rm 
 Keep the notation as above.
Let $G=\gR_{S'/S}(G')$ and $\Psi=\gR_{S'/S}(\Psi')$, where $G'$ and $\Psi'$ are semisimple simply connected.
In general an orientation $v$ between $G$ and $\Psi$ over $S$ does not guarantee the existence of an orientation between $G'$ and $\Psi'$ over $S'$,
even when $G'$ is isotypic. To see this, assume both $G'$ and $\Psi'$ are isotypic of type $\bt$. 
Write $G'=\gR_{\uDynn(G')/S'}(G'_\bt)$ and $\Psi'=\gR_{\uDynn(\Psi')/S'}(\Psi'_\bt)$. Then $G=\gR_{\uDynn(G')/S}(G'_\bt)$ and $\Psi=\gR_{\uDynn(\Psi')/S}(\Psi'_\bt)$.
The orientation $v\in\uIsom_\Dyn(\Dyn(G),\Dyn(\Psi))(S)$ gives $v^\flat_{\rDyn,\bt}$ and $v_{\rDyn,\bt,\triangledown}$ in (\ref{square3}).
However, as $v_{\rDyn,\bt,\triangledown}$ is a morphism over $S$ which is not necessarily an $S'$-morphism, we do not have identification between 
$G'=\gR_{\uDynn(G')/S'}(G'_\bt)$ and $\gR_{\uDynn(\Psi')/S'}(G'_\bt)$ in general. This explains why $v^\flat_\bt$ does not guarantee the existence of an orientation between $G'$ and $\Psi'$ over $S'.$
}
\end{sremark}

\section{The case of a LG ring}

\subsection{LG rings}\label{subsec_LG} A ring $R$ is said to be a {\em $LG$-ring\/} if for every $n\in \NN$ and every $f\in R[X_1, \ldots, X_n]$ the polynomial $f$ represents a unit if and only if one of the following obviously equivalent conditions hold:

\begin{enumerate}
 \item\label{LG-defi} $f$ represents a unit over every localization $R_\m$, $\m$ a maximal ideal of $R$;

 \item\label{LG-defii} $f$ represents a unit over every field $R/\m$, $\m$ a maximal ideal of $R$;

  \item\label{LG-defiii} $f$ represents a unit over every $R$--field $F$;

 \item \label{LG-defiv} $\sum_{(r_1, \ldots, r_n) \in R^n} \, f(r_1, \ldots, r_n) R = R$,
 \end{enumerate}
 
Clearly a semilocal ring is a LG ring. An important fact is that locally free modules 
of rank $n$ over $LG$-rings are free  (\cite{EG} Thm. 2.10).
One says that a ring $R$ {\em satisfies the primitive criterion\/} \cite{EG, MW}
the following equivalent conditions hold: 

\begin{enumerate}
\item for every primitive polynomial $P\in R[X]$ there exists $r\in R$ such that $P(r) \in R^\times$;

\item for every primitive $Q\in R[X_1, \ldots, X_n]$ there exists $(r_1, \ldots, r_n)\in R^n$ such that $Q(r_1, \ldots, r_n) \in R^\times$;

\item $R$ is LG and all residue fields are infinite.
\end{enumerate}

In particular a semilocal ring with infinite residue fields satisfies the primitive criterion. 
The ring of all algebraic integers and the ring of all real algebraic integers  also satisfy the primitive criterion (\cite{EG} Cor. 4.7 and Cor. 6.4).
For more on LG rings, see \cite[\S 11.20]{GPR}.

\subsection{Schematically dominant morphisms} \label{subsec_dominant}

We recall that a morphism of schemes $f: Y \to X$
is schematically dominant if $\cO_X \to f_* \cO_Y$ is
injective; if $f$ is an immersion we say that 
$X$ is schematically dense in $Y$ \cite[11.10.2]{EGA4}.
If $g: X' \to X$ is flat,
and $f$ is schematically dominant and quasi-compact, then 
$f': Y \times_X X' \to X'$ is  schematically dominant
({\it loc. cit.}, 11.10.5).

If  $f: Y \to X$ is an $S$--morphism of schemes, we say that 
$f$ is schematically $S$-dominant (or universally 
schematically dominant with respect to $S$) if for each $S$-scheme $S'$, 
$f_{S'}: Y_{S'} \to X_{S'}$ is schematically 
dominant; similarly if $f$ is an immersion, we say that
$Y$ is schematically $S$--dense in $X$. 
Under mild assumptions, this property can be checked fiberwise ({\it ibid}, 11.10.9).
The base change property above extends mechanically:
If $g: X' \to X$ is flat  and
$f: Y \to X$ is schematically  $S$-dominant and quasi-compact, then 
$f': Y \times_X X' \to X'$ is schematically  $S$-dominant.

Rydh extended that to morphism of algebraic spaces \cite[\S 7.5]{Ry1}. A morphism of algebraic spaces $f: Y \to X$
is schematically dominant if the morphism
$\cO_X \to f_* \cO_Y$ is
injective in the small \'etale site;
if $f$ is an immersion, we say that $X$ is schematically 
dense in $Y$.

As in \cite[\S 3.5]{Ry2}  the base change property 
extends in that framework:
given a flat  morphism $g: X' \to X$
of algebraic spaces 
assuming that 
$f: Y \to X$ is schematically  dominant and quasi-compact, then  $f': Y \times_X X' \to X'$ is schematically dominant.

If $S$ is a scheme and $f: Y \to X$ is a morphism
of $S$-algebraic spaces, we say that 
$f: Y \to X$ is schematically $S$--dominant  
if for each $S$-scheme $S'$, 
$f_{S'}: Y_{S'} \to X_{S'}$ is schematically 
dominant; similarly if $f$ is an immersion, we say that
$Y$ is schematically $S$--dense in $X$.
One last time, given a flat  morphism $g: X' \to X$  and
assuming that 
$f: Y \to X$ is schematically  $S$-dominant and quasi-compact, then  $f': Y \times_X X' \to X'$ is schematically $S$--dominant.

\subsection{(4)-Versal torsors} \label{subsec_versal}

Our goal is to adapt the framework of versal torsors \cite[\S 5]{GMS} from fields to the LG
ring setting by using algebraic spaces for which we use 
Olsson's book as reference \cite{O} as well as the 
Stacks project \cite{St}. It is close to 
Reichstein-Tossici's definition of (3)-versality
for algebraic groups \cite[Def.\ 1.3]{RT}.

Let $S=\Spec(R)$ be the spectrum of a LG-ring $R$
and let $G$ be an affine flat $S$--group scheme of finite presentation.

\begin{sdefinition} 
Let $f:E \to X$ be a $G$--torsor where $E$ is a
quasi-compact quasi-separated $S$--scheme and $X$ is a quasi-compact quasi-separated  $S$-algebraic space.
We say that $E \to X$ is (4)--versal if it satisfies
the following property:
for each open retrocompact subspace $U$ 
schematically $S$--dense in  $X$, for each  $R$--algebra $B$ 
satisfying the primitive criterion, and for each $G$--torsor $F$ over $B$, there exists $x \in U(B)$ such that $E_x \cong F$ as $G$--torsors.

\end{sdefinition}

\begin{sremarks}
{\rm

\noindent (a)  We observe that if $f:E \to X$ is a (4)--versal  $G$--torsor so is
$f^{-1}(V) \to V$ for each schematically $S$--dense  open retrocompact subspace $V$ of $X$.

\smallskip

\noindent (b) There exists a dense open 
subspace $X'$ of $X$ that is a scheme \cite[Tag 06NH]{St}. If $X'$ can be chosen furthermore schematically $S$-dense
retrocompact, 
we see then that
the open dense retrocompact subscheme $U'=U \times_X X'$ can be chosen in the definition. 
This is the case if $S$ is noetherian
of dimension $\leq 1$ and $X$ is reduced separated
according to \cite[Tag 0ADD]{St}.

}
\end{sremarks}

\begin{slemma} \label{lem_versal} Let  $G \to \GL(\cE)$  be a faithful linear representation of
$G$ where $\cE$ is a locally free $R$-module of finite rank.

\smallskip

\noindent (1) The fppf quotient  $X= \GL(\cE)/G$ is representable
by an algebraic space  and 
 the quotient map $\GL(\cE) \to X$ is a (4)-versal $G$--torsor with right $G$-action.

\smallskip

\noindent (2) Assume that the vector $S$--group $\VV(\cE)$
(defined in the notation) admits an open universally dense $S$-subscheme $V$  which is $G$--stable  and such that $G$ acts freely on $V$. Let $Y=V/G$ be the quotient algebraic space.
Then $V \to V/G$ is a (4)--versal $G$--torsor.

\end{slemma}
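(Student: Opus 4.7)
The plan is to treat the two parts in parallel, as they share the same strategy: first establish representability of the quotient together with the torsor property, then reduce the versality statement to the existence of $B$-points of a dense open subset of affine space over a semilocal ring with infinite residue fields. For representability, faithfulness of $G \hookrightarrow \GL(\cE)$ in (1) and the assumed freeness in (2) provide a free right $G$-action of the affine flat finitely presented $S$-group scheme $G$; the fppf quotients $X = \GL(\cE)/G$ and $Y = V/G$ are then representable as algebraic spaces with the quotient maps being $G$-torsors, by the standard representability criterion for free group scheme actions.

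For the versality of part (1), fix a schematically $S$-dense open retrocompact subspace $U \subseteq X$, a semilocal $R$-algebra $B$ with infinite residue fields, and a $G$-torsor $F$ over $B$. First I would exhibit some $x_0 \in X(B)$ with $\pi^{-1}(x_0) \simeq F$: the long exact sequence of non-abelian cohomology attached to $G \hookrightarrow \GL(\cE)$ with quotient $X$ identifies the image of $X(B) \to H^1_{\fppf}(B,G)$ with the kernel of $H^1_{\fppf}(B,G) \to H^1_{\fppf}(B,\GL(\cE))$, and the latter group vanishes over the semilocal ring $B$ because projective modules of constant rank over $B$ are free. Next, to move $x_0$ into a point of $U(B)$, I would consider the map $m_{x_0}\colon \GL(\cE)_B \to X_B$, $g \mapsto g\cdot x_0$. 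After an fppf base change $B'/B$ that trivialises $F$ (so that $x_0$ lifts to some $y_0 \in \GL(\cE)(B')$), it becomes the composition $\pi_{B'} \circ R_{y_0}$ of right translation by $y_0$ with the projection, hence is faithfully flat and quasi-compact. Therefore $W := m_{x_0}^{-1}(U_B)$ is an open subscheme of $\GL(\cE)_B \subseteq \VV({\cE}nd(\cE))_B \simeq \A^{n^2}_B$, and by the preservation of schematic $S$-density under flat base change recalled in \S\ref{subsec_dominant}, $W$ is schematically dense in $\GL(\cE)_B$. Its fibre over each maximal ideal $\gm_i \subseteq B$ is then a nonempty open of $\A^{n^2}_{\kappa(\gm_i)}$; picking a rational point in each fibre (possible because the residue fields are infinite) and patching via the Chinese remainder theorem yields $g \in W(B)$. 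The point $x := g \cdot x_0$ now lies in $U(B)$, and $\pi^{-1}(x) \simeq \pi^{-1}(x_0) \simeq F$ since left translation by $g$ is an isomorphism of right $G$-torsors on $\GL(\cE)$.

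Part (2) follows the same template. With $F$, $U$ and $B$ as above, the twist $V_F := F \wedge^G V$ is an open subscheme of the vector bundle $\VV(F \wedge^G \cE)$ on $\Spec(B)$, which is isomorphic to $\A^n_B$ with $n = \rank \cE$ because $F \wedge^G \cE$ is locally free and $B$ is semilocal. The natural map $\pi_F\colon V_F \to Y_B$ is a torsor under the inner twist ${}^F G$, and combining the universal $S$-density of $V$ in $\VV(\cE)$ with the schematic $S$-density of $U$ in $Y$ and with \S\ref{subsec_dominant} shows that the intersection $V_F \cap \pi_F^{-1}(U_B)$ is a schematically dense open of $\A^n_B$ with nonempty fibres over every maximal ideal of $B$. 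The same Chinese-remainder and infinite-residue-field argument as in part (1) produces a $B$-point of this intersection, whose image in $Y_B$ gives the desired $y \in U(B)$ with $\pi^{-1}(y) \simeq F$.

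The main obstacle I foresee is less the individual steps, each fairly standard in isolation, than the bookkeeping required to propagate schematic (rather than merely topological) $S$-density through the sequence of twists and flat base changes, and to extract from it fibrewise non-emptiness of the relevant open subscheme over every closed point of $\Spec(B)$. It is this fibrewise non-emptiness, together with infinitude of the residue fields via density of rational points of dense opens in affine space over an infinite field, that makes the ``dense open in $\A^n_B$ admits a $B$-point'' step go through.
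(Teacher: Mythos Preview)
Your argument for part (1) matches the paper's essentially line for line: the exact sequence in nonabelian cohomology to land at some $x_0\in X(B)$, the orbit map $g\mapsto g\cdot x_0$ to pull $U$ back to a dense open of $\GL(\cE)_B$, and the Chinese remainder/infinite residue field trick to produce a $B$-point. The paper first reduces to indecomposable $B$ so that $\cE_B$ is free and one can literally write $\GL_n$; you work directly with $\GL(\cE)$ and invoke $H^1(B,\GL(\cE))=1$, which is equivalent.

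For part (2) you take a genuinely different route. The paper \emph{reduces (2) to (1)}: it first picks a point $v\in V(B)$ (via the same density-plus-CRT argument), uses it to build a $G$--equivariant map $f_v:\GL(\cE)_B\to V_B$, $\sigma\mapsto v\circ\sigma$, passes to quotients $\overline f_v:X_B\to Y_B$, and then applies part (1) to the dense open $\overline f_v^{\,-1}(U_B)\subset X_B$. Your approach instead twists the whole picture by $F$: you form $V_F=F\wedge^G V_B\subset \VV(F\wedge^G\cE)_B\simeq\A^n_B$, observe that $\pi_F:V_F\to Y_B$ is a ${}^F\!G$-torsor whose $B$-points are exactly pairs $(y,\ F\simeq V_y)$, and then run the density argument once inside this twisted affine space. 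This works; the identification $(F\times V_y)/G_{\mathrm{diag}}\simeq\uIsom_G(F,V_y)$ is what makes a $B$-point of $V_F$ over $y$ yield $\pi^{-1}(y)\simeq F$, and the preservation of schematic density under the flat map $\pi_F$ and under twisting is covered by \S\ref{subsec_dominant}. The trade-off: the paper's reduction is shorter and reuses (1) verbatim, while your twist argument is more self-contained and avoids the preliminary choice of $v\in V(B)$, at the cost of carrying the contracted-product bookkeeping.
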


\begin{proof}
\noindent(1)
If $R$ is noetherian, then the fppf quotient
$X=\GL(\cE)/G$ is represented by an 
$S$--algebraic space \cite[th.\ 3.1.1]{A}.
The usual noetherian reduction trick shows that it is the case in general.
Let $B$ be an $R$--ring satisfying the primitive criterion
and $F$ be a $G$-torsor over $B$.

We assume firstly that $\cE$ is of constant rank $n$ so that 
$\cE \cong R^n$ according to \S  \ref{subsec_LG}.
Consider the exact sequence of pointed sets:
\begin{align*}
1\to G(B)\to \GL_n(B)\to X(B)\overset{\delta}{\to} H^1_{fppf}(B,G)\to H^1_{fppf}(B,\GL_n).
\end{align*}
As $B$ is an  LG ring,  we have  $H^1_{fppf}(B,\GL_n)=1$ from the same references.
Hence the torsor $F$ is a pull-back of $\GL(\cE)\to X$ at some element $x\in X(B)$.

Note that we have a natural left $\GL_n$-action on $X$.
Hence $x$ defines a morphism $f_x$ from $\GL_{n,B}$ to $X_B$ as follows:
$$f_x(g)=g\cdot x,$$
for all $B$-algebra $C$ and all $g\in \GL_{n,B}(C)$.

Let $U$ be an open retrocompact subspace of $X$  that is
schematically $S$--dense in $X$. 
We have the following cartesian diagram
$$\begin{CD}
U\underset{X,f_x}{\times}\GL_{n,B} @>>>  \GL_{n,B}\\
@VVV    @VV f_x V\\
U_B @>>> X_B
\end{CD}
$$
The algebraic space
$U\underset{X,f_x}{\times} \GL_{n,B}$ is representable 
by an open retrocompact subscheme $V_x$ of $\GL_{n,B}$ which is $S$-dense according to base change property listed in \ref{subsec_dominant}. 
Since $\GL_{n, R}$ is $S$-dense in the affine space $\VV(\Mat_n(R))$,
 $V_x$ is $S$-dense in $\VV(\Mat_n(R))$,
According to \cite[prop.\ 1.4]{GN}, we have that $V_x(B) \not = \emptyset$.
In other words, there exists $g \in \GL_n(B)$ such that 
$g\cdot x$ is in the image of $U(B)$.
 Since $\delta(x)=\delta(g\cdot x)$, $F$ is a pull back of $\GL(\cE)\to X$ at  $g\cdot x\in U(B)$.

The reduction to the constant rank case is standard.
Since the rank function is locally constant we have
a decomposition $R=R_1 \times \dots \times R_c$ such that
$\cE_{R_i}$ has constant rank $n_i$ for $i=1,...,c$.
The point is that each $R_i$ is an LG ring and similarly
each $B_i=B \otimes_R R_i$ satisfies the primitive criterion 
\cite[Ex.\ 1.2.(b)]{GN}.
Since $G(B)= G(B_1) \times \dots \times G(B_c)$, 
the first case is enough to complete the proof.

\smallskip

\noindent (2)
Let $B$ be a $R$--ring satisfying the primitive criterion.
Let $U$ be an open retrocompact subspace of $V/G$ which is 
$S$--schematically dense.
We consider the $G$--torsor $P= V \times_{V/G} U$
over $U$ and want to show  that it is 
(4)-versal. It is enough to show that for a given  $G$-torsor  $F$ over $B$,
we can find $u \in U(B)$ such that $P_u  \cong F$.
The  Chinese Remainder theorem 
 implies that $V(B) \not = \emptyset$.
We pick  $v\in V(B)$, we can define a right $G$-equivariant morphism $f_v$ from 
$\GL(\cE)$ to $\VV(\cE)$ by 
$$f_v(\sigma)=v\circ\sigma,$$
where  $\sigma$ is an element of  $\GL(\cE)(C)$ and $C$ is a $B$-algebra. We have then a commutative diagram of $G$-torsors
$$\begin{CD}
\GL(\cE)_B@>f_v>>  V_B\\
@V{q}VV    @V{q'}V V\\
\GL(\cE)_B/G_B @>\overline{f}_v>> (V/G)_B.
\end{CD}
$$
The point is that 
$\overline{f}_v^{-1}(U_B)$ is  retrocompact and 
schematically $B$-dense in $X_B=\GL(\cE)_B/G_B$
according to the preliminaries of \ref{subsec_dominant}.

According to  (1) applied to $B$, the 
$G$-torsor $\GL(\cE)\to X$ is a (4)-versal torsor.
There exists $x \in \overline{f}_v^{-1}(U_B)$ such that 
$q^{-1}(x)\cong F$. We put $u= \overline{f}_v(x)$
and conlude that $P_u={q'}^{-1}(u) \cong F$.
Thus $V\to V/G$ is also a (4)-versal torsor.
\end{proof}

In this paper we shall use only the next special case.

\begin{slemma}\label{lem_versal_finite}
Let $G$ be  an $S$--group scheme that is  finite locally free. Let $\cE$ be a $G-R$-module which is locally free of finite rank as an $R$--module.
Let $U \subset \VV(\cE)$ be a $G$--stable 
retrocompact $S$-dense open subscheme such that $G$ acts freely on
$U$. Then  $U/G$ is representable by an $S$--scheme $X$ and the quotient map $U \to X$
is a versal (4)--torsor.
\end{slemma}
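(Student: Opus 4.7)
The plan is to derive this lemma as a combination of two ingredients: a representability result for quotients of quasi-affine schemes by free actions of finite locally free group schemes, plus the general versality statement established in Lemma \ref{lem_versal}(2).

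First, for the representability of $U/G$ by a scheme, I would invoke the standard theorem from \cite{SGA3}, Exp.\ V, Thm.\ 4.1 (equivalently, the Stacks Project's tag on quotients by finite group scheme actions): if $G$ is a finite locally free $S$-group scheme acting freely on an $S$-scheme $U$, then the fppf quotient $U/G$ is representable by a scheme provided every $G$-orbit in $U$ is contained in an affine open of $U$. To verify this hypothesis, I would note that $U$ is quasi-affine, being an open subscheme of the affine $S$-scheme $\VV(\cE)$. Writing $\VV(\cE) \setminus U = V(I)$ for an ideal $I$ of the structure ring of $\VV(\cE)$, any finite subset of points $\{p_1, \ldots, p_n\}$ of $U$ corresponds to primes not containing $I$; by prime avoidance there exists $f \in I$ with $f \notin \mathfrak{p}_i$ for all $i$, so $D(f) \subseteq U$ is a principal affine open containing the $p_i$. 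Since $G$ is finite locally free, every orbit has finite underlying set of points (on fibers), so this produces the required affine open.

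Second, for (4)-versality, I would apply Lemma \ref{lem_versal}(2) directly to the same data $(\cE, U)$. The hypotheses of that lemma are that $\VV(\cE)$ admits an open universally dense $G$-stable $S$-subscheme on which $G$ acts freely, which is exactly our $U$ (the retrocompact $S$-density hypothesis we have is at least as strong as what is needed). That lemma already provides the (4)-versality of $V/G$ as an algebraic space quotient, and the representability upgrade from the first step tells us this algebraic space is the scheme $X$; since the two quotients agree as fppf sheaves, the $G$-torsor $U \to X$ produced here coincides with the one from Lemma \ref{lem_versal}(2) and is therefore (4)-versal.

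The main obstacle is really just the bookkeeping of the representability criterion: the nontrivial content is the appeal to SGA3 V.4.1, which requires quasi-affineness of $U$ so that orbits land in affine opens. Once that is in place, the versality is a direct specialization of the preceding lemma and no new argument is needed.
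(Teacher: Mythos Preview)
Your proposal is correct and follows essentially the same two-step approach as the paper: representability of $U/G$ by a scheme via a standard reference, followed by an appeal to Lemma~\ref{lem_versal}(2) for (4)-versality. The only difference is cosmetic: the paper cites \cite[III.2.6.1, Corollaire]{DG} rather than SGA3, Exp.~V, Thm.~4.1, and does not spell out the verification that orbits lie in affine opens (your quasi-affineness argument is a reasonable elaboration of what that reference requires).
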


\begin{proof} According to \cite[III.2.6.1, Corollaire]{DG}, $U/G$ is representable
by an $S$-scheme $X$ and $U \to X$ is a $G$--torsor. Lemma \ref{lem_versal}
shows that $U \to X$ is a versal (4)--torsor.
\end{proof}

\begin{sremark}{\rm
In this setting, it is not clear whether (4)-versal torsors always exist.
}
\end{sremark}

\subsection{Embbedings in quasi-split groups}
In this section, we assume that $S$ is the spectrum of 
a ring $R$ satisfying the primitive criterion.
Let $G$ be a quasi-split reductive $S$--group scheme.

Let $(B,T)$ be a Killing couple of $G$. 
We identify the Weyl group $W_G(T)$ with $N_G(T)/T$ and denote by $\Dyn(G)$
the Dynkin scheme (which is finite \'etale over $S$). We have a canonical isomorphism
$\Dyn(\Phi(G,T)) \simlgr \Dyn(G)$.

According to \cite[XXIV.3.13]{SGA3}, if $G$ is simply connected, we have an isomorphism $T \simlgr R_{\Dyn(G)/S} (\GG_{m, \Dyn(G)})$.
We denote by $T^{reg}$ the open subscheme of regular elements of $T$.
Then  $W_G(T)$ stabilizes $T^{reg}$ and acts freely on it.
According to \cite[III.2.6.1, Corollaire]{DG}, the fppf sheaf $T^{reg}/W_G(T)$ is representable by a scheme
and the map  $\pi:T^{reg} \to T^{reg}/W_G(T)$ is  a $W_G(T)$--torsor.

Denote by $G//G$ the adjoint quotient. We use now  Chevalley's theorem \cite{L2} in this setting
$u: T/W_G(T) \simlgr G//G$.
Away from type $A_{2n}$, we can deal with
the Steinberg cross section $c: G//G \to G$ of the quotient map  $G \to G//G$ \cite[th.\ 5.7]{L2}; in that case
we can associate to an element
 $x \in (T^{reg}/W_G(T))(S)$  the semisimple regular  element
 $g =c( u(x)) \in G(S)$  giving rise to the maximal  $S$--subtorus $T_x=C_G(g)$ of $G$.
The following generalizes to the LG 
setting a result of Gille-Raghunathan \cite{Gi2004,Rg}.

\begin{stheorem} \label{thm_semi_local} Let $S$ be the spectrum of 
a ring $R$ satisfying the primitive criterion.
Let $G$ be a quasi-split reductive $S$--group scheme.
Let $\Psi$ be a root datum that is admissible for
$G$ and let $v \in \uIsomext( G, \Psi)(S)$ be an orientation. 
Then $\gE(G,\Psi,v)(S) \not = \emptyset$.
\end{stheorem}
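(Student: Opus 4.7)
The plan is to produce an $S$-point of $\gE(G,\Psi,v)$ by manufacturing a maximal torus of $G$ whose oriented type matches $(\Psi,v)$, using the Steinberg cross section together with a versality property of the quotient map $T^{q,reg}\to T^{q,reg}/W(T^q)$.

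First I would reduce, via Remark \ref{rem_behaviour}, to the case where $G$ is semisimple simply connected. Let $(B,T^q)$ be a Killing couple of $G$, $\Psi^q=\Phi(G,T^q)$, and let $v^q\in\uIsomext(G,\Psi^q)(S)$ be the orientation defined by the tautological embedding $T^q\hookrightarrow G$. The pairing (2) of Proposition \ref{pairing} applied to $(v,v^q)$ produces an orientation $u\in\uIsomext(\Psi^q,\Psi)(S)$, and I set $P=\uIsomint_u(\Psi^q,\Psi)$, a right $W(\Psi^q)$-torsor over $S$. By Lemma \ref{lem_compatibilities}(2)-(3), $\gE(G,\Psi,v)$ is identified with the twist of $\gE(G,\Psi^q,v^q)$ by $P$; so producing a section of $\gE(G,\Psi,v)$ over $S$ amounts to finding a maximal torus $T'\subset G$ whose fiber in $\gE(G,\Psi^q,v^q)\to \cT_G$ is isomorphic to $P$ as a $W(\Psi^q)$-torsor.

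Next I would exploit the Steinberg cross section. Away from type $A_{2n}$, Chevalley's isomorphism $u\colon T^q/W(T^q)\simlgr G//G$ and the section $c\colon G//G\to G$ together allow one to attach to each $x\in(T^{q,reg}/W(T^q))(S)$ the regular semisimple element $g=c(u(x))\in G(S)$, whose centralizer $T_x=\uCentr_G(g)$ is a maximal $S$-torus of $G$. By Proposition \ref{W-torsor}, the $W(T^q)$-torsor $\pi^{-1}(x)$ obtained as the fiber of $\pi\colon T^{q,reg}\to T^{q,reg}/W(T^q)$ is canonically isomorphic to the orientation $W$-torsor between $\Psi^q$ and $\Phi(G,T_x)$, i.e.\ exactly the fiber of $\gE(G,\Psi^q,v^q)\to\cT_G$ at the point corresponding to $T_x$. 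Hence it suffices to produce $x\in(T^{q,reg}/W(T^q))(S)$ whose associated fiber $\pi^{-1}(x)$ is isomorphic to $P$.

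This last step is the (4)-versality of $\pi$, which is the main obstacle. To obtain it I would appeal to Lemma \ref{lem_versal_finite}: the isomorphism $T^q\simeq R_{\Dyn(G)/S}(\GG_m)$ of \cite[XXIV.3.13]{SGA3} realizes $T^{q,reg}$ as a retrocompact $W(T^q)$-stable $S$-dense open subscheme of the affine space $\WW(\cF)=R_{\Dyn(G)/S}(\A^1)$, on which $W(T^q)$ acts freely (the complement being a union of root-kernel hypersurfaces). Provided the $W(T^q)$-action extends to a linear action on $\cF$, Lemma \ref{lem_versal_finite} shows that $\pi$ is (4)-versal: any $W(T^q)$-torsor over a semilocal base with infinite residue fields, or over a finite field, arises as some $\pi^{-1}(x)$. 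Equivalently, the twist $P\wedge^{W(T^q)}T^{q,reg}$ is a smooth $S$-scheme with nonempty residue-field fibers (using density of rational points on smooth varieties over infinite fields, or Lang's theorem over finite fields), and the semilocal form of Hensel's lemma produces an $S$-point whose image in $T^{q,reg}/W(T^q)$ is the desired $x$.

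Finally, I would treat type $A_{2n}$ separately, where the Steinberg section is not available in the standard form. The argument in that case requires substituting a different cross section adapted to $A_{2n}$ (as in the extension of Steinberg's theorem referenced in \cite{L2}) and then running the same scheme; this is the second technical point flagged in the introduction. The hard part throughout is the passage from (4)-versality of $\pi$ over fields (Gille-Raghunathan) to the semilocal setting; the algebraic-space machinery of \S\ref{subsec_versal} is precisely what is designed to bridge this gap.
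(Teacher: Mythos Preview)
Your treatment of the generic case (away from type $A_{2n}$) matches the paper's argument closely: the reduction to the simply connected case, the identification of $\gE(G,\Psi,v)$ as the twist of $\gE(G,\Psi^q,v^q)$ by the $W(\Psi^q)$-torsor $P=\uIsomint_u(\Psi^q,\Psi)$, the (4)-versality of $\pi:T^{q,reg}\to T^{q,reg}/W$ via Lemma \ref{lem_versal_finite}, and the use of the Steinberg section together with Proposition \ref{W-torsor} to realize $P$ as $\pi^{-1}(x)$ for some $x$. Two minor remarks: the paper disposes of the finite-field case in one line by citing that every homogeneous space under a connected group over a finite field has a rational point, rather than folding it into the versality argument; and your ``semilocal form of Hensel's lemma'' aside is not quite right, since a semilocal ring need not be henselian---the actual lifting mechanism inside Lemma \ref{lem_versal} is that the relevant scheme is open in an affine space and that $\GL_n(B)\to\GL_n(B/\prod\gM_i)$ is surjective.

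The genuine gap is in type $A_{2n}$. You propose to ``substitute a different cross section adapted to $A_{2n}$'' and cite \cite{L2}, but that reference explicitly provides the Steinberg section only \emph{away} from $A_{2n}$; no such replacement section is available over a general semilocal base, so ``running the same scheme'' is not an option. The paper's argument is of an entirely different nature: one first uses the Cartan involution to reduce to showing $\gE(G,\Psi)(S)\neq\emptyset$ (ignoring the orientation), then realizes $G$ as a unitary group $U(V,h)$ with $V$ of rank $2n+1$, enlarges $(V,h)$ to a hyperbolic $(V',h')$ of rank $2n+2$ and simultaneously enlarges $\Psi$ to a root datum $\Psi'$ of type $A_{2n+1}$, applies the already-proven Case 1 to $(G',\Psi')$ to get an embedding $f:T'\hookrightarrow G'$, and finally takes the centralizer of a rank-one subtorus $f(T_0)\subset G'$ to descend back to an embedding of $T$ into $G$. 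This reduction-to-$A_{2n+1}$ trick is the missing idea in your proposal.
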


\begin{sremark}\label{rem_finite} {\rm The statement holds also in the case when $S$ is the spectrum
of a  finite field. This is obvious since all homogeneous $G$-spaces have a $k$-rational point \cite[III.2.4]{Se}.
 }
\end{sremark}

\begin{proof}
Using the same kind of reduction as in the proof of Lemma 
\ref{lem_versal}.(1) we can assume without loss of generality that $G$ is of constant type.
It is harmless to assume furthermore that 
$G$ is semisimple adjoint by Remark \ref{rem_behaviour}. Furthermore using the 
isotypic  decomposition \eqref{dec3} for embedding functors, we can assume
that the geometric fibers of $G$ are almost  simple (note that a finite \'etale extension of $R$
satisfies the primitive criterion in view of \cite[Ex.\ 1.2.(b)]{GN}).
Let $(B,T)$ be a Killing couple of $G$. We denote  by $W_G(T)=N_G(T)/T$ the Weyl group and by $\Dyn(G)$
the Dynkin scheme (which is finite \'etale). Note that $W_G(T)\simeq W(\Phi(G,T))^{op}$. We set $W=W(\Phi(G,T))$.

\smallskip

Let $c\in \uIsomext(G,\Phi(G,T))(S)$ be the orientation coming from the natural inclusion $T\hookrightarrow G$, and denote by $u\in\uIsomext(\Phi(G,T),\Psi)(S)$ the image of $(c,v)$
 under the morphism in Prop.\ \ref{pairing}.
We consider the  right $W$--torsor $J=\uIsomint_u(\Phi(G,T),\Psi)$.
Note that $J$ can also be regarded as a left $W_G(T)$-torsor.
According to Lemma \ref{lem_compatibilities},
we have $\gE(G,\Psi,v)=J  \land^W\gE(G,\Phi(G,T),c)$.

By Lemma \ref{lem_versal_finite}, $\pi: T^{reg} \to  T^{reg}/W_G(T)$ is a (4)-versal (left) $W_G(T)$--torsor.
In particular there exists $x \in (T^{reg}/W_G(T))(S)$ such that
$\pi^{-1}(x) \cong J$ (as left $W_G(T)$--torsors and right $W$-torsors).

\medskip

\noindent
\textbf{Case 1.}  $G$ is  not of $A_{2n}$-type.
By Steinberg cross section, there is a semisimple regular element $t\in G(S)$ mapped to $x$.
The centralizer of $t$ is a maximal torus $T'$ of $G$ over $S$.
Let $c'\in\uIsomext(G,\Phi(G,T'))(S)$ be the canonical orientation induced by the natural inclusion from $T'$ to $G$.
Denote by $u'\in\uIsomext(\Phi(G,T),\Phi(G,T'))(S)$ be the image of $(c,c')$ under 
the pairing in Prop.\ \ref{pairing}.
By Prop.\ \ref{W-torsor} the  left $W_G(T)$-torsors $\pi^{-1}(x)$ and $\uIsomint_{u'}(\Phi(G,T),\Phi(G,T'))$ are isomorphic.
It follows that  $J \land^W \gE(G,\Phi(G,T),c) =\gE(G,\Psi,v)$
is isomorphic to $\pi^{-1}(x)\land^W\gE(G,\Phi(G,T),c)  \cong  \gE(G,\Phi(G,T'),c')$.
As  $\gE(G,\Phi(G,T'),c')(S)  \not = \emptyset$, we conclude that
$\gE(G,\Psi,v)(S)$ is not empty.

\medskip

\noindent
\textbf{Case 2.} $G$ is of $A_{2n}$-type. 
%First note that for type $A$, we have a Cartan involution defined over $R$, hence if we  have an embedding with respect to one orientation, we have an embedding with respect to the other orientation.
%Hence it suffices to show that $\gE(G,\Psi)(R)\neq\emptyset$.
The key point is to reduce to the case  of type $A_{2n+1}$. 
Let $R_\sharp/R$ be the \'etale quadratic algebra corresponding to the outer type of 
$G$. Then $R_\sharp/R$ splits $G$ and we denote by $\sigma$ its canonical involution. 

We consider the $R_\sharp$-module $V= (R_\sharp)^{2n+1}=V_{-} \oplus R_\sharp \oplus V_{+}$
with $V_{-}=(R_\sharp)^{n}$ and $V_{+}=(R_\sharp)^{n}$.
It is equipped with  the hermitian form $h(x,y)= \sigma(x_0) \, y_0+ \sum\limits_{i=1,..,n} \sigma(x_{-i})\,  y_i$ with the notation $x=(x_i)_{i=-n,..,n}$.
Denoting by $\HH_n$ the standard hyperbolic space of rank $2n$, we
have an orthogonal decomposition $h= \HH_n \perp \langle 1 \rangle$.

The unitary group scheme $U(V,h)$ is reductive 
and its derived group $SU(V,h)$
is a semisimple simply connected $R$--group scheme
of type  $A_{2n+1}$ \cite[prop.\ C.6]{GN}. We consider the cocharacter $\lambda: \GG_{m,R} \to U(V,h)$
defined by $\lambda(t)= \mathrm{diag}(t^{-n},t^{-n+1}, \dots, t^{-1}, 1, t, \dots, t^{n-1}, t^{n})$.
The associated parabolic $R$--subgroup $P=P_{U(V,h)}(\lambda)$ 
carries the Levi subgroup $C_{U(V,h)}(\lambda)$ which is the  $R$--torus
$E=U(V,h) \cap R_{R_\sharp/R}( \GG_m^{2n+1})\cong \GG_m \times_R R_{R_\sharp/R}(\GG_m^n)$ so that $P$ is an 
$R$--Borel subgroup of $U(V,h)$. 
In view of \cite[\S 2.12]{GN}, the $R$--group scheme $U(V,h)$ is then quasi-split.
By considering $E$, we see that the outer type of $SU(V,h)$ is 
$[R_\sharp] \in H^1(R, \ZZ/2\ZZ)$
so that $G \cong SU(V,h)$. 
The group $G$ carries then a Cartan involution defined over $R$, hence if we  have an embedding with respect to one orientation, we have an embedding with respect to the other orientation.
Hence it suffices to show that $\gE(G,\Psi)(R)\neq\emptyset$.

Since $U(V,h)$ and $G=SU(V,h)$ have same automorphism group, 
same scheme of maximal tori and same isomorphism classes of root data,
we can work with $\widetilde G=U(V,h)$.
Let $\gS_{i}$ be the symmetric group of the set $\{1,...,i \}$. We have a natural monomorphism from $\gS_{2n+1}$ to $\gS_{2n+2}$ and hence a monomorphism $\iota$ from
$\gS_{2n+1}\times\ZZ /2 \ZZ$ to $\gS_{2n+1}\times \ZZ/ 2\ZZ$ 
that maps $\ZZ/2\ZZ$ isomorphically to itself.
The map $\iota$ induces a map $\iota_{\ast}: H^1_{\acute{e}t}(R,\gS_{2n+1}\times \ZZ/2\ZZ )\to H^1_{\acute{e}t}(R,\gS_{2n+2}\times\ZZ/ 2\ZZ)$.
Note that $\gS_{2n+1}\times\ZZ /2 \ZZ$ is isomorphic to the Weyl group of the split root datum of $\GL_{2n}$. 
Hence $H^1_{\acute{e}t}(R,\gS_{2n+1}\times \ZZ/2\ZZ)$ classifies those root data that are \'etale locally isomorphic to the split root datum of $\GL_{2n+1,R}$.

Thus $\Psi=(\cM,\cM^\vee,\cR,\cR^\vee)$ corresponds to some   $[\alpha] \in H^1_{\acute{e}t}(R,\gS_{2n+1}\times \ZZ/2\ZZ )$.
We twist the split root datum of $\GL_{2n+2,R}$ by $\iota_\ast(\alpha)$ and denote it by $\Psi'=(\cM',\cM'^\vee,\cR',\cR'^\vee)$.
Let $T$ and $T'$ be the tori determined by $\cM$ and $\cM'$ respectively.
Note that by our construction $\cM'^\vee$ can be written as $\cM^\vee\oplus \cE^\vee$ with $\cE$ is a $fppf$ twisted form of $\ZZ$ in such a way that  $R^\vee$ is injectively sent to a subset of ${R'}^\vee$.
Fix such an isomorphism $\cM'^\vee\simeq\cM^\vee\oplus \cE^\vee$. This in turns gives an isomorphism $T'\simeq T\times \bR^{(1)}_{R_\sharp/R}(\bG_m)$.
We denote by $T_0$ the subtorus $1\times \bR^{(1)}_{R_\sharp/R}(\bG_m)\subseteq T'$ (under the above isomorphism).

%Write the Witt decomposition of  $V$ as $\langle a \rangle \perp \H_n$, where $\H_n$ is the maximal hyperbolic Hermitian subspace of $V$.
%Let $(V',h')=(V,h)\perp \langle -a \rangle$.
We put $V'= V \oplus R^\sharp$ and $h'\bigl( (v,r), (w,s) \bigr)
= h(v,w) - \sigma(r) s$. We have an orthogonal decomposition $(V',h')= \HH(V_{-}) \oplus (R_\sharp^2, h'')$
where $h''((r_1,s_1), (r_2,s_2))=  {\sigma(r_1) s_1 -\sigma(r_2) s_2}$.
Since $(1,1)$ is an isotropic unimodular vector
of  $h''$, we know that $h''$ is hyperbolic in view of \cite[prop.\ 3.7.1]{K}.
Since $R_\sharp$ is a LG ring, the locally free $R_\sharp$--modules of constant rank are free so that $h'' \cong \HH_1$ and $h' \cong \HH_{n+1}$.
The $R$--group scheme $U(V',h')$ is then a
quasi-split semisimple $R$--group scheme of type $A_{2n+1}$.

By our assumption, there is an orientation between $\Psi$ and $G$. 
Hence $\Psi$ and $G$ are both become inner forms after base change to $R$.
By our construction of $\Psi'$ and $G'$, they are also both of inner form after base change to $R$.
Hence there is an orientation $v'$ between $\Psi'$ and $G'$. 
Since $G'$ is quasi-split of type $A_{2n+1}$, by Case 1 we have $\gE(G',\Psi',v')(R)\neq\emptyset$.

Choose $f\in \gE(G',\Psi',v')(R)$. 
Consider the subgroup $f(T_0)$ in $G'$.
Let $C$ be the centralizer $\uCentr_{G'}(f (T_0))$, which is a Levi subgroup of $G'$. 
Clearly $f(T')\subseteq C$.
Let $V_0$ be the maximal $R$-submodule of $V$ fixed by $f(T_0)$. Namely, $V_0$ is the maximal $R$-submodule on which $f(T_0)$ acts trivially.
Since the action of $f(T_0)$ is $R_\sharp$-linear, 
$V_0$ is furthermore an  $R_\sharp$-submodule of $V$.
It can be checked  \'etale locally that $V=V_0 \perp V_0^\perp$
and that the $R_\sharp$--module $V_0$ is locally free of rank $2n+1$.
Let $h_0$ be the restriction of $h'$ on $V_0$.

\begin{claim} $h_0 \cong \HH_n \perp \langle a \rangle$ for some $a \in R^\times$.
\end{claim}

Since $R_\sharp$ is a LG ring, the $R_\sharp$--module $V_0$ is free of rank $2n+1$
and $V_0^\sharp$ is free of rank $1$.
We have then an orthogonal decomposition $\HH_{n+1}=h'= h_0 \perp \langle - a \rangle$
for some $a \in R^\times$. The same argument as above
shows that  $\langle a \rangle \perp \langle - a \rangle \cong\HH_1$
so that $h_0 \perp \HH \cong \HH_{n+1} \perp \langle a \rangle$.
The cancellation property \cite[App.\ 5.5]{GN} yields that
$h_0 \cong \HH_n \perp \langle a \rangle$ as desired.
The Claim is established.

The same reasoning as above shows that $U(V_0,h_0)$ 
is the quasi-split $R$--group scheme of type $A_{2n}$ of
Tits invariant $[R_\sharp] \in H^1(R,\ZZ/2\ZZ)$.
In other words we have $U(V_0,h_0)\simeq U(V,h)=\widetilde G$.  

Clearly $C$ stabilizes $V'$ and hence induces an action of $C/f(T_0)$ on  $(V',h')$.
This gives a group homomorphism of $C/f(T_0)$ to $U(V_0,h_0)$, which is an isomorphism at each geometric point $\ol{s}$ of $S$.
Therefore $C/f(T_0)\simeq U(V_0,h_0)$ and $f$ gives an embedding of $T\simeq T'/T_0$ in $\widetilde G$ with respect to $\Psi$.
\end{proof}

\begin{scorollary} \label{cor_main} Let $S$ be a scheme 
that is the spectrum of a ring $R$ satisfying the primitive criterion.
Let $G$ be a quasi-split reductive $S$--group scheme.
Let $X$ be an $S$-scheme
satisfying one of the following conditions:

\smallskip 

(i) $S$ is noetherian;

\smallskip

 (ii) $X$ is locally of finite type over $S$.

\smallskip

We assume that $X$ is a left homogeneous space under $G$
and that for each point $s \in S$, the stabilizers of $X_{\ol s}$ are maximal tori  of $G_{\ol s}$.
Then $X \cong G/T$ where $T$ is a maximal $S$--torus of $G$.
\end{scorollary}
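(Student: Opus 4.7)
The plan is to combine Theorem \ref{thm_main} with Theorem \ref{thm_semi_local} in a straightforward way. First I would invoke Theorem \ref{thm_main}: since $X$ is a left $G$--homogeneous space whose geometric stabilizers are maximal tori of $G$, and since either $S$ is noetherian or $X$ is locally of finite type over $S$, there exists an admissible root datum $\Psi=(\cM,\cM^\vee,\cR,\cR^\vee)$ for $G$ together with an orientation $v\in\uIsomext(G,\Psi)(S)$ such that $X\simeq \gE(G,\Psi,v)$ as left $G$--homogeneous spaces.

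Next I would apply Theorem \ref{thm_semi_local}, which is tailored to exactly this situation: $G$ is quasi-split reductive over $S$, and $S$ is either a finite field or a semilocal scheme whose residue fields at the maximal ideals are infinite. It yields $\gE(G,\Psi,v)(S)\neq\emptyset$. Pick any $f\in \gE(G,\Psi,v)(S)\simeq X(S)$. By the very definition of the oriented embedding functor (recalled just before Remark just above Lemma \ref{lem_compatibilities}), $f\colon T\hookrightarrow G$ is a closed immersion of group schemes whose image $f(T)$ is a maximal torus of $G$, and moreover $f(T)$ is the stabilizer of the point $f\in X(S)$ under the $G$--action.

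Finally, the $G$--equivariant orbit map $G\to X$, $g\mapsto g\cdot f$, factors through the fppf quotient $G/f(T)$ and induces a $G$--equivariant morphism $G/f(T)\to X$. Since $X$ is a homogeneous $G$--space and the stabilizer of $f$ is exactly $f(T)$, this morphism is an isomorphism of $S$--sheaves, hence an isomorphism of $S$--schemes (both sides are representable $G$--homogeneous spaces with the same stabilizer). Setting $T':=f(T)$, a maximal $S$--torus of $G$, gives the desired isomorphism $X\simeq G/T'$.

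There is no substantial obstacle here: the whole content of the corollary is packaged in the two preceding theorems, and the only thing to verify is the soft formal step that a homogeneous space with an $S$--rational point of stabilizer $H$ is isomorphic to $G/H$, which is immediate from the definitions of homogeneous spaces recalled in Section~2 (and the fact that the orbit map is an epimorphism of fppf sheaves with kernel $H$).
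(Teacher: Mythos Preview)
Your argument is correct and follows exactly the approach of the paper, which simply states that the corollary follows immediately from Theorem~\ref{thm_main} and Theorem~\ref{thm_semi_local}. You have merely spelled out the (routine) final step that an $S$--point of a $G$--homogeneous space with stabilizer $f(T)$ yields $X\simeq G/f(T)$.
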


\begin{proof}
It follows immediately from Theorem \ref{thm_main} and Theorem \ref{thm_semi_local}. 
\end{proof}

\begin{sremark}\label{rem_finite2} {\rm The statement holds also in the case $S$ is the spectrum
of a  finite field in view of Remark \ref{rem_finite}.
 }
\end{sremark}

\subsection{An application to the local-global principle for embeddings}

In this section we assume that $k$ is a global field with characteristic not $2$.
Let $K$ be a field, $A$ be a central simple algebra over $K$ with
involution $\tau$, and $E$ be an \'etale algebra over $K$ with
involution $\sigma$. Suppose that $\tau|_K=\sigma|_K$ and that $k$ is
the field of invariants $K^{\tau}$.

Let $\dim_L A=n^2$.
Assume that $\dim_K E= n$, and that if $K\neq k$, then $\dim_k E^\sigma=n$.
If $K=k$, then we assume that $\dim_k E^\sigma =[\frac{n+1}{2}]$. 

The local-global principle for embeddings of $K$-algebras with involution $(E,\sigma)$ into $(A,\tau)$ is first considered in \cite{PR1}.
One can associate a root datum $\Psi$ to $(E,\sigma)$ and a reductive group $U(A,\tau)^\circ$ to $(A,\tau)$, where $U(A,\tau)^\circ$ is the connected component of the unitary group $U(A,\tau)$. (See \cite{L1} \S 1.3.2.)
In \cite{L1} \S 1.3.2 and \cite{BLP1} \S 1.2 and 1.3, it is shown that there is a bijection between the embeddings of algebras with involutions over $k$ and  the embeddings of corresponding root data into reductive groups.

Combining with Theorem \ref{thm_semi_local}, we have the following theorem.
\begin{theorem}\label{embed_algebras}
Let $(E,\sigma)$ and $(A,\tau)$ be as above.  If $U(A,\tau)^\circ$ is quasi-split, then the local-global principle holds for the $K$-algebra embeddings of $(E,\sigma)$ into $(A,\tau)$.
\end{theorem}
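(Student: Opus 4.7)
The plan is to translate the embedding problem into the existence of a $k$-point on an oriented embedding scheme, and then to invoke Theorem \ref{thm_semi_local}.

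First, I would use the bijection recalled above (\cite{L1} \S 1.3.2 and \cite{BLP1} \S 1.2--1.3) identifying $K$-algebra embeddings of $(E,\sigma)$ into $(A,\tau)$ over $k$ with $k$-points of the embedding functor $\gE(G,\Psi)$, where $G = U(A,\tau)^\circ$ and $\Psi$ is the root datum attached to $(E,\sigma)$. The local embedding hypothesis then gives $\gE(G,\Psi)(k_w)\neq\emptyset$ at every place $w$ of $k$, and in particular supplies a local orientation $v_w \in \uIsomext(G,\Psi)(k_w)$ induced by the local embedding.

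The main technical point is to upgrade the family $(v_w)_w$ to a global orientation $v \in \uIsomext(G,\Psi)(k)$. The sheaf $\uIsomext(G,\Psi)$ is a torsor under the finite \'etale $k$-group scheme $F = \uAut(\Psi)/W(\Psi)$, which for the classical types arising in our setting (types $A$, $B$, $C$, $D$) is either trivial or a form of $\ZZ/2\ZZ$ (the $D_4$-triality does not enter because $\Psi$ comes from an \'etale algebra with involution). The classical Hasse principle for $H^1(k,\ZZ/2\ZZ) = k^\times/k^{\times 2}$ then forces the relevant cohomology class to vanish globally given that it vanishes locally everywhere, producing the sought $v$. This step, and the careful identification of $F$ in each classical case, is where I expect the main difficulty.

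Finally, with $v \in \uIsomext(G,\Psi)(k)$ in hand, the quasi-split hypothesis on $G$ allows us to apply Theorem \ref{thm_semi_local} over $\Spec(k)$ (a semilocal scheme with infinite residue field, since $k$ is a global field of characteristic $\neq 2$) and conclude that $\gE(G,\Psi,v)(k)$ is nonempty. Unwinding the bijection yields the desired embedding of $(E,\sigma)$ into $(A,\tau)$ over $k$, completing the proof.
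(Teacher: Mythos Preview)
Your strategy is correct and coincides with the paper's: reduce to producing a global orientation $v\in\uIsomext(G,\Psi)(k)$ and then invoke Theorem~\ref{thm_semi_local}. The paper organizes the orientation step as a case analysis by type of involution: in the unitary case ($K\neq k$) and for types $B$, $C$ it quotes \cite{L1} to get an orientation outright (no local input needed), while in the orthogonal even case (type $D$) it identifies the obstruction with the comparison of the two quadratic \'etale $k$--algebras $\Delta(E)$ and $Z(A,\tau)$ and uses the Hasse principle for such algebras, citing \cite{BLP1} Prop.~1.3.1 to convert this into an orientation. Your uniform cohomological packaging---$\uIsomext(G,\Psi)$ is a torsor under a group that is trivial or $\ZZ/2\ZZ$, and Hasse holds for $H^1(k,\ZZ/2\ZZ)$---is essentially the same argument; your handling of the $D_4$ case (``triality does not enter because $\Psi$ comes from an \'etale algebra with involution'') is exactly what the paper makes precise via the discriminant/Clifford center identification and the cited proposition, so you should point to \cite{BLP1} Prop.~1.3.1 there rather than leaving it as an assertion. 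The paper's case split has the minor advantage of showing that in most cases the local hypothesis is not even needed to orient, but for the theorem as stated the two arguments are equivalent.
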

\begin{proof}
Let $\Psi$ be the root datum associated to $(E,\sigma)$ (\cite{L1} \S 1.3.2).
Let $G=U(A,\tau)^\circ$. By Theorem \ref{thm_semi_local}, it suffices to show that $\uIsomext(G,\Psi)(k)\neq \emptyset$.

First we consider the case where $K\neq k$.
In this case, we can always fix an orientation by \cite{L1} Theorem 1.15 (2) and Remark 1.16.

Next we consider the case where $K=k$.
If $\tau$ is symplectic involution or $\tau$ is orthogonal with $n$ odd, then $G$ is of type $C_{n/2}$ or $B_{(n+1)/2}$.
Then there is always an orientation between $\Psi$ and $G$.

Consider the case that $\tau$ is orthogonal with $n$ even.
Suppose there is  an embedding $\eta_v$ of $(E\times_k k_v,\sigma\otimes \id_{k_v})$ into $(A\times_k k_v,\sigma\otimes\id_{k_v})$ for all places $v$ of $k$.
Then $\eta_v$ gives an isomorphism between the discriminant $\Delta(E)$  of $E$ and the center $Z(A,\tau)$ of the Clifford algebra of $(A,\tau)$ over $k_v$ (see \cite{BLP2} \S 2.3).
As they are both quadratic \'etale algebras over $k$, this means that $\Delta(E)$ is isomorphic to $Z(A,\tau)$ over $k$.
By \cite{BLP1} Proposition 1.3.1, an isomorphism between $\Delta(E)$ and $Z(A,\tau)$ gives an orientation between $\Psi$ and $G$.

Hence for $K=k$, we can always fix an orientation between $\Psi$ and $G$.
\end{proof}

 \bigskip

\bigskip

\medskip


\begin{thebibliography}{99}

\bibitem{A}  S.~Anantharaman, {\it Sch\'emas en groupes, espaces homogènes et espaces alg\'ebriques sur une base de dimension 1},
  M\'emoires de la Soci\'et\'e Math\'ematique de France {\bf 33} (1973), 5-79. 


\bibitem{BLP1} E.~Bayer-Fluckiger, T.-Y.~Lee and R.~Parimala, {\it Embedding functor for classical groups and Brauer-Manin obstruction},
Pac. J. Math. \textbf{279} (2015), 87-100.

 \bibitem {BLP2} E.~Bayer-Fluckiger, T.-Y.~Lee, and R.~Parimala, \textit{Embeddings of maxi-mal tori in classical groups and explicit Brauer-Manin obstruction}, 
 J. Eur. Math. Soc. {\bf 20} (2018), 137-163.
 
 
 
\bibitem{Bo}  A.~Borel, {\it Linear algebraic groups}, Graduate Texts in Mathematics {\bf  126} (2nd ed.),
Berlin, New York: Springer-Verlag.

\bibitem{BLR} S.~Bosch, W.~L\"utkebohmert, M.~Raynaud,
{\it N\'eron models}, Ergebnisse der Mathematik und ihrer Grenzgebiete
{\bf 21} (1990), Springer.




\bibitem{BT} F.~Bruhat, J.~Tits, {\it
Groupes r\'eductifs sur un corps local : II. Sch\'emas en groupes.
Existence d'une donn\'ee radicielle valu\'ee},
Publications Math\'ematiques de l'IH\'ES {\bf 60} (1984), 5-184.


\bibitem{CF} B.~Calm\`es, J.~Fasel, {\it Groupes classiques},
     Autour des sch\'emas en groupes, vol II, Panoramas et Synth\`eses {\bf 46} (2015),    1-133.

%\bibitem[CTS]{CTS} J.--L. Colliot--Th\'el\`ene, J.--J. Sansuc, {\it Fibr\'es
%quadratiques et composantes connexes r\'eelles}, Math. Annalen {\bf
%244} (1979), 105--134.



%\bibitem[Co]{Co} B.~Conrad,  {\it Non-split reductive groups over
 %   $\mathbb{Z}$},
  %   Autour des sch\'emas en groupes, vol II, Panoramas
   %  et Synth\`eses {\bf 46} (2015), 193-253.



%\bibitem{CGP} B. Conrad, O. Gabber, G. Prasad, {\it
%Pseudo-reductive groups},   Cambridge University Press, second edition (2016).

%\bibitem[CLO]{CLO} B. Conrad, M. Lieblich, M. Olsson, {\it
 %Nagata compactification for algebraic spaces},
  %J. Inst. Math. Jussieu {\bf 11} (2012), 747-814.


%\bibitem[DS]{DS} V. G. Drinfeld, C. Simpson,  {\it  $B$-structures on $G$-bundles and local triviality},
%Mathematical Research Letters {\bf 2} (1995), 823-829.


\bibitem{DG} M.~Demazure, P.~Gabriel,
{\it Groupes alg\'ebriques}, Masson (1970).


\bibitem{EG} D.~R.~Estes and R.~M.~Guralnick, {\em Module equivalences: Local to global when primitive polynomials represent units}, J.~Algebra \textbf{77} (1982), 138-157.


\bibitem{GMS} S.~Garibaldi, A.~Merkurjev and J.-P.~Serre, {\it
    Cohomological invariants
 in Galois cohomology}, University Lecture Series \textbf{28} (2003),
American Mathematical Society.


\bibitem{GPR}  S.~Garibaldi,  H.P.~Petersson, M.~Racine, 
{\it  Albert algebras over commutative rings}, to appear
in  Cambridge University Press as New Mathematical Monographs vol.\ {\bf 48} (2024).


\bibitem{EGA1} A.~Grothendieck, J.-A.~Dieudonn\'e, {\it El\'ements
de g\'eom\'etrie alg\'ebrique. I}, Grundlehren der Mathematischen Wissenschaften  166; Springer-Verlag, Berlin, 1971.


%\bibitem[EGAII]{EGA2} A.\ Grothendieck (avec la collaboration de J.\ Dieudonn\'e),
%{\it El\'ements de G\'eom\'etrie Alg\'ebrique II}, Publications
%math\'ematiques de l'I.H.\'E.S.  no 8 (1961).

%\bibitem[EGAIII]{EGA3} A.\ Grothendieck (avec la collaboration de J.\ Dieudonn\'e),
%{\it El\'ements de G\'eom\'etrie Alg\'ebrique II}, Publications
%math\'ematiques de l'I.H.\'E.S.  no 11 and 17 (1961-1963).

\bibitem{EGA4} A.~Grothendieck (avec la collaboration de J.~Dieudonn\'e),
{\it El\'ements de G\'eom\'etrie Alg\'ebrique IV}, Publications math\'ematiques de l'I.H.\'E.S. no 20, 24, 28 and 32 (1964 - 1967).



%\bibitem[FGA]{FGA} A.\ Grothendieck, {\it
 %Fondements de la g\'eom\'etrie alg\'ebrique}, (Extraits du S\'eminaire Bourbaki,
 %1957-1962), Paris, Secr\'etariat math\'ematique, 1962.

% \bibitem{Gi} P. Gille, {\it Groupes alg\'ebriques semi-simples en dimension cohomologique $\leq 2$},
%Lecture Notes in Mathematics {\bf 2238} (2019), Springer.


\bibitem{Gi2004} P.~Gille, {\it  Type des tores maximaux des groupes semi-simples}, J. Ramanujan Math. Soc. {\bf 19} (2004), 213-230.


\bibitem{Gi2015} P.~Gille, {\it Sur la classification des
sch\'emas en groupes semi-simples}, ``Autour des sch\'emas en groupes, III'',
Panoramas et Synth\`eses {\bf 47} (2015), 39-110.


\bibitem{GN} P.~Gille, E.~Neher, {\it Group schemes over LG-rings  and applications to cancellation theorems and Azumaya algebras}, 
https://hal.science/hal-04631256


\bibitem{GNR} P.~Gille, E.~Neher, C.~ Ruether, {\it The Norm Functor over Schemes}, preprint arXiv:2401.15051




%\bibitem[G]{G} A. Grothendieck, {\it Technique de descente et th\'eor\`emes d'existence en g\'eom\'etrie
% alg\'ebrique.  IV. Les sch\'emas de Hilbert},  S\'eminaire Bourbaki, tome  13 (1960/61), no. 221.


%\bibitem[Gi]{Gi} P. Gille, {\it Sur la classification des sch\'emas
%en groupes semi-simples}, ``Autour des sch\'emas en groupes, III'',
%Panoramas et Synth\`eses {\bf 47} (2015), 39-110.

\bibitem{Gir} J.~Giraud,
{\em Cohomologie non-ab\'elienne}, Springer (1970).


  \bibitem{GW} U. G\"ortz, T. Wedhorn, {\it  Algebraic Geometry I: Schemes},
   Springer (2020). 

\bibitem{G}  A.~Grothendieck, {\it Le groupe de Brauer. I. Alg\`ebres d'Azumaya et interpr\'etations diverses}, Dix expos\'es sur la cohomologie des sch\'emas, 46-66,
Adv. Stud. Pure Math., 3, North-Holland, Amsterdam, 1968.


  
  
\bibitem{K} M.-A.~Knus, {\it Quadratic and Hermitian Forms over
    Rings}, Grundlehren der mathematischen Wissenschaften {\bf 294}
    (1991), Springer.


%\bibitem[I]{I} L. Illusie, {\it Grothendieck's existence
%theorem in formal geometry}, with a letter  of
%Jean-Pierre Serre, Math. Surveys Monogr. {\bf  123} (2005),
%Fundamental algebraic geometry,
%179-233, Amer. Math. Soc., Providence, RI.

%\bibitem[L]{L} A. Langer, {\it Semistable principal $G$-bundles in positive characteristic},
% Duke Math. J. {\bf 128} (2005), 511-540.

%\bibitem[LMB]{LMB} G. Laumon, L. Moret-Bailly, \emph{Champs alg\'ebriques}, Ergebnisse der Mathematik und ihrer Grenzgebiete. 3. Folge. A Series of Modern Surveys in Mathematics, 39. Springer-Verlag, Berlin, 2000.


\bibitem{L1} T.-Y.~Lee, {\it Embedding functors and their arithmetic properties},
 Comment. Math. Helv. {\bf 89} (2014),  671-717.

\bibitem{L2} T.-Y.~Lee, {\it Adjoint quotients of reductive groups},
 Panoramas et  Synth\`eses {\bf 47} (2015), 131-145.


%\bibitem{M} B. Margaux, {\it  Formal torsors under reductive group schemes},
%Rev. Un. Mat. Argentina {\bf 60} (2019), 217-224.

\bibitem{M} B.~Margaux, {\it 
 Vanishing of Hochschild cohomology for affine group schemes and rigidity of homomorphisms between algebraic groups}, 
Doc. Math. {\bf 14} (2009), 653-672.


\bibitem{MW} B.~R.~McDonald and W.~C.~Waterhouse, {\em Projective modules over rings with many units}, Proc.~Amer.~Math.~Soc.~\textbf{83}(3) (1981), 455-458.

\bibitem{O} M.~Olsson, {\it Algebraic spaces and stacks}, American Mathematical Society Colloquium Publications, 62. American Mathematical Society, Providence, RI, 2016.

\bibitem{PR1} G.~Prasad and A.S.~Rapinchuk, \textit{Local-Global principles for embedding of fields with involution into semple algebras with involution},
 Comment. Math. Helv. \textbf{85} (2010), 583-645.

\bibitem{Rg} M.S.~Raghunathan, {\it
Tori in quasi-split-groups},
J. Ramanujan Math. Soc. {\bf 19} (2004), 281-287.

\bibitem{R} M.~Raynaud, {\it  Faisceaux amples sur les sch\'emas en groupes et les espaces homog\`enes},
 Lecture Notes in Mathematics {\bf  119} (1970), Springer-Verlag, Berlin-New York.


 \bibitem{RT} Z.~Reichstein, D.~Tossici, {\it
Special groups, versality and the Grothendieck-Serre conjecture}, preprint (2019),
 arXiv:1912.08109

\bibitem{Ry1}  D.~Rydh, {\it Submersions and
effective descent of \'etale morphism},
Bull. Soc. math. France
{\bf 138}(2010)  181-230.

\bibitem{Ry2}  D.~Rydh,  {\it Approximation of Sheaves on Algebraic Stacks}, 
International Mathematics Research Notices (2016),  717-737.

%\bibitem{SGA1} {\it S\'eminaire de G\'eom\'etrie
%alg\'ebrique de l'I.H.\'E.S., 1960-1961,
%Rev\^etements \'etales et groupes fondamental, dirig\'e par  A.\ Grothendieck},  Lecture Notes in Math. 151-153. Springer (1970).

\bibitem{SGA3} {\it S\'eminaire de G\'eom\'etrie
alg\'ebrique de l'I.H.\'E.S., 1963-1964, Sch\'emas en groupes, dirig\'e 
par M.\ Demazure et A.\ Grothendieck},  Lecture Notes in Math. 151-153.
Springer (1970).

\bibitem{SGA4} M.~Artin, A.~Grothendieck, J.-L.~Verdier, 
{\it Th\'eorie des topos et
Cohomologie \'etale des sch\'emas},  in Lecture Notes in Math., Nos. 269, 270, 305. Springer-
Verlag, Berlin/Heidelberg/New York, 1972-1973.


%\bibitem[Se]{Se} J.P. Serre,   {\it Groupe de Grothendieck des sch\'emas
%en groupes r\'eductifs d\'eploy\'es},
%Publications Math\'ematiques de l'IH\'ES {\bf  34} (1968),  37-52.

\bibitem{Se} J.-P.~Serre,   {\it Cohomologie
galoisienne}, 5-i\`eme version r\'evis\'ee, Lecture Notes
in Mathematics {\bf 5}, Springer.


\bibitem{St} Stacks project, https://stacks.math.columbia.edu

%\bibitem{T} R. W. Thomason, {\it Equivariant resolution, linearization, and Hilbert's fourteenth problem over arbitrary base schemes},
%Adv. in Math. {\bf 65} (1987),  16-34.


\end{thebibliography}
\end{document}